\documentclass[12pt]{article}

\usepackage[utf8]{inputenc}
\usepackage{amsmath, amssymb, amsthm}
\usepackage{mathtools}
\usepackage{mathrsfs}
\usepackage{enumitem}
\usepackage{microtype}
\usepackage[super,sort&compress]{natbib}
\usepackage{graphicx}
\usepackage{subcaption}                    
\usepackage{float, booktabs}
\usepackage{algorithm, algorithmic}

\usepackage{listings}
\usepackage{xcolor}
\usepackage[colorlinks=true,linkcolor=blue,citecolor=blue]{hyperref}
\theoremstyle{plain}
\newtheorem{theorem}{Theorem}[section]
\newtheorem{lemma}[theorem]{Lemma}

\newtheorem{corollary}[theorem]{Corollary}
\newtheorem{remark}[theorem]{Remark}

\theoremstyle{definition}

\newtheorem{assumption}[theorem]{Assumption}

\numberwithin{equation}{section}

\allowdisplaybreaks[4]
\newcommand{\keywords}[1]{\textbf{Keywords:} #1}

\newcommand{\R}{\mathbb{R}}
\newcommand{\E}{\mathbb{E}}
\newcommand{\PP}{\mathcal{P}}
\newcommand{\tr}{\operatorname{tr}}
\newcommand{\dd}{\mathrm{d}}
\newcommand{\norm}[1]{\left\lVert #1\right\rVert}
\newcommand{\abs}[1]{\left\lvert #1\right\rvert}
\newcommand{\bb}[1]{\mathbb{#1}}
\newcommand{\Law}{\operatorname{Law}}

\begin{document}

\title{Convergence of Densities for the Euler--Maruyama Approximation of\\
Brownian-Driven McKean--Vlasov Equations}

\author{
	{  Zhenghan Yin, Xu Sun\thanks{Corresponding author}}\hspace{2mm}
	\\
	{\itshape\small  School of Mathematics and Statistics},\\
	{\itshape\small Huazhong University of Science and Technology},
	{\itshape\small Wuhan 430074,  China}\\
	{\itshape\small E-mail: d202380025@hust.edu.cn,
		xsun@hust.edu.cn}\vspace{1mm}
}
\date{\today}

\maketitle

\begin{abstract}
Convergence of the transition density of the Euler--Maruyama scheme for a
Brownian-driven McKean--Vlasov stochastic differential equation is established,
the coefficients being uniformly elliptic, spatially smooth, and Lipschitz with
respect to the law in the $2$-Wasserstein distance. The density $p_n$ of the
$n$-step scheme is shown to converge to the density $p$ of the McKean--Vlasov
equation in a Gaussian-weighted supremum norm at rate $O(n^{-1/2})$, that is, of
order one half in the step size, and hence also uniformly in space. The proof
develops a discrete parametrix (L\'evi--Hadamard) expansion for the Euler chain
and compares it with the continuous parametrix of the limiting equation along a
hierarchy of measure-flow, time-discretisation and lattice corrections; the
discrepancy between the discrete and the continuous measure flows is of order
$O(n^{-1/2})$ in the $2$-Wasserstein distance and does not degrade the final rate.
\end{abstract}

\keywords{McKean--Vlasov equation; Euler--Maruyama scheme; transition density; parametrix method; convergence rate}

\section{Introduction}
\label{sec:intro}

McKean--Vlasov stochastic differential equations (SDEs), whose coefficients depend on the law
of the solution itself, arise as the mean-field limit of interacting particle systems and appear
across kinetic theory (the Boltzmann and Vlasov equations \cite{Kac1956}), neuroscience
\cite{Baladron2012}, and mean-field games
\cite{LasryLions2007,HuangMalhameCaines2006}; this program originates with McKean
\cite{McKean1966} and Sznitman's propagation-of-chaos theory \cite{Sznitman1991}. Both the
analysis and the Euler--Maruyama approximation of McKean--Vlasov SDEs are therefore of central
importance.

The well-posedness of McKean--Vlasov equations under Lipschitz coefficients is classical
\cite{McKean1966,Sznitman1991}, with recent extensions to singular and non-Lipschitz
coefficients \cite{HuangWang2019,DingQiao2021}. For classical SDEs, the transition density
exists under uniform ellipticity and is smooth when the coefficients are smooth, or more
generally under H\"ormander's condition
\cite{Aronson1967,Friedman1964,Hormander1967}, and the convergence of the Euler--Maruyama
densities was obtained by Bally--Talay \cite{BallyTalay1996a,BallyTalay1996b} and
Konakov--Mammen \cite{KonakovMammen2000}. For McKean--Vlasov equations, Crisan--McMurray
\cite{CrisanMcMurray2018} established the smoothness of the marginal densities under uniform ellipticity and, more generally, under a
H\"ormander-type condition; the density convergence of the Euler scheme for McKean--Vlasov
equations appears not to have been addressed so far, and this is the
question studied in the present paper.

We study the convergence of the densities of the Euler--Maruyama scheme for a Brownian-driven
McKean--Vlasov equation. Let $T>0$ and let $W=(W_t)_{t\in[0,T]}$ be
a standard $d$-dimensional Brownian motion. We consider the McKean--Vlasov SDE
\begin{equation}
\label{eq:SDE}
X_t=x_0+\int_0^t b(s,X_s,\mu_s)\,\dd s+\int_0^t \sigma(s,X_s,\mu_s)\,\dd W_s,\qquad
\mu_s=\Law(X_s),
\end{equation}
where $x_0\in\R^d$ is deterministic,
$b:[0,T]\times\R^d\times\PP_2(\R^d)\to\R^d$ and
$\sigma:[0,T]\times\R^d\times\PP_2(\R^d)\to\R^{d\times d}$ are measurable, and
$\PP_2(\R^d)$ denotes the space of probability measures on $\R^d$ with finite second moment,
equipped with the $2$-Wasserstein distance $\mathcal{W}_2$. We write
\begin{equation}
\Sigma(t,x,\mu):=\sigma(t,x,\mu)\sigma(t,x,\mu)^{\top}\in\R^{d\times d}.
\end{equation}
Since the law of the solution of \eqref{eq:SDE} depends on $\sigma$ only through
$\Sigma=\sigma\sigma^{\top}$, there is no loss of generality in taking $\sigma$ to be the symmetric
square root $\Sigma^{1/2}$ of $\Sigma$; all regularity conditions are imposed on $\Sigma$ and $b$
(see Section~\ref{sec:setting}).
Since time can be rescaled, we take $T=1$ without loss of generality.

We consider the Euler--Maruyama scheme with uniform step size $h=1/n$ and grid points $t_k=kh$,
$0\le k\le n$:
\begin{equation}
\label{eq:Euler}
X^{(n)}_{0}=x_0,\qquad
X^{(n)}_{k+1}=X^{(n)}_k+h\,b\big(t_k,X^{(n)}_k,\mu^{(n)}_k\big)
+\sqrt{h}\,\sigma\big(t_k,X^{(n)}_k,\mu^{(n)}_k\big)\,\xi_{k+1},
\end{equation}
where $(\xi_k)_{k\ge1}$ is a sequence of independent standard Gaussian vectors in $\R^d$, and
$\mu^{(n)}_k=\Law(X^{(n)}_k)$. Since the $\xi_{k+1}$ are standard Gaussian and independent of the
past, each step of the scheme is Gaussian conditionally on the past. We prove that, for every $n$, the law $\mu^{(n)}_n$ of the
terminal value $X^{(n)}_n$ admits a density $p_n(x_0,\cdot)$, and that
\begin{equation}
\label{eq:main}
\sup_{y\in\R^d}\exp\Big(\tfrac1c\,\norm{y-x_0}^{2}\Big)\,\abs{p_n(x_0,y)-p(x_0,y)}
=O\big(n^{-1/2}\big),
\end{equation}
for some $c>0$, where $p(x_0,\cdot)$ is the density of the terminal value $X_1$ of the
McKean--Vlasov solution. A direct consequence is that the density produced by the Euler scheme converges uniformly in space to the density of the solution of the original equation \eqref{eq:SDE}, with order one half in the step size.

Our proof builds on the parametrix method of Konakov--Mammen \cite{KonakovMammen2000}: freezing
the coefficients at a terminal point and at the measure flow turns the Euler chain into a sum of
independent Gaussian variables, whose law is therefore exactly Gaussian. The
genuinely new difficulty, absent in \cite{KonakovMammen2000}, is that the discrete chain
\eqref{eq:Euler} uses the discrete measure flow $\mu^{(n)}$ while the limit equation uses the
continuous flow $\mu$. We handle this discrepancy through the
Lipschitz dependence of the coefficients on the measure (see Section~\ref{sec:setting}) together
with the fact that the Euler measure flow converges to the McKean--Vlasov flow at rate
$O(n^{-1/2})$ in the $2$-Wasserstein distance (a special case of Ding--Qiao
\cite[Theorem 4.1]{DingQiao2021}; see also Appendix~\ref{app:flowrate} for a self-contained proof).

Let us outline the structure of the paper. In Section~\ref{sec:setting} we state our assumptions
and the main result. Section~\ref{sec:prelim} collects preliminary estimates: moment bounds, the
convergence rate of the measure flow, and the Gaussian-kernel toolbox. Section~\ref{sec:parametrix}
develops the parametrix expansions for both the continuous and the discrete problems.
Section~\ref{sec:comparison} compares the two expansions and isolates the three sources of
error (measure flow, time discretisation and lattice), and
Section~\ref{sec:completion} assembles these estimates to complete the proof of the main theorem.
The proofs of the auxiliary estimates used along the way are collected in
Appendices~\ref{app:moments}, \ref{app:flowrate} and \ref{app:kernels}.

\section{Setting, Assumptions and Main Result}
\label{sec:setting}

Throughout the paper $C$ denotes a positive constant whose value may change from line to
line; dependence on a parameter is indicated by a subscript, as in $C_\alpha$ or $C_\beta$. We write
$\norm{f}_\infty:=\sup_{t,x,\mu}\norm{f(t,x,\mu)}$ for the uniform sup-norm of a
(vector- or matrix-valued) coefficient function $f$. Further
notation is introduced where it is first needed. For a multi-index
$\alpha=(\alpha_1,\dots,\alpha_d)\in\bb{N}_0^d$ we write $\abs{\alpha}=\sum_i\alpha_i$ and
$\partial_x^\alpha=\partial_{x_1}^{\alpha_1}\cdots\partial_{x_d}^{\alpha_d}$, so that $\abs{\alpha}=0$ corresponds to the
function itself, $\abs{\alpha}=1$ to its first-order derivatives, and $\abs{\alpha}=2$ to its
second-order derivatives $\partial^2_{x_ix_j}$. The constants $c_0,C_0,\Lambda$ are as in
Assumptions~\ref{ass:elliptic} and \ref{ass:measLip} below.

The symbol $\rho$ always denotes the square root of a time-interval length: for $0\le s<t\le1$ we set
$\rho(s,t):=\sqrt{t-s}$, and for grid indices $0\le a<b\le n$ we abbreviate $\rho_{ab}:=\rho(t_a,t_b)$, so that
\begin{equation}
\label{eq:rhodef}
\rho(s,t)^2=t-s\quad(0\le s<t\le1),\qquad \rho_{ab}^2=t_b-t_a=\tfrac{b-a}{n}\quad(0\le a<b\le n).
\end{equation}
When the time interval is clear from the context we write simply $\rho$ for its square root, that is, $\rho=\rho(s,t)$ in a continuous expression (so that $\rho^2=t-s$) and $\rho=\rho_{ab}$ in a discrete one (so that $\rho^2=t_b-t_a$). Every occurrence of $\rho$ in this paper is of this form, except in the statement of the Gaussian-kernel toolbox (Lemma~\ref{lem:kernel}), where $\rho,\rho'$ denote arbitrary positive widths; in every application the widths are again square roots of time-interval lengths of this form. We also identify throughout the
terminal densities with the transition densities at $(s,t)=(0,1)$: $p(x_0,y):=p(0,1,x_0,y)$
and $p_n(x_0,y):=p_n(0,n;x_0,y)$.

A decisive observation, which justifies the whole approach, is that the McKean--Vlasov equation
\eqref{eq:SDE} becomes a classical time-inhomogeneous diffusion once the measure flow is known:
for a fixed initial point, the coefficients $(t,x)\mapsto b(t,x,\mu_t)$ and
$(t,x)\mapsto\Sigma(t,x,\mu_t)$ are deterministic functions of $(t,x)$, so that $X$ is a
time-inhomogeneous Markov diffusion; likewise the Euler scheme \eqref{eq:Euler} is a
time-inhomogeneous Markov chain with coefficients $b(t_k,x,\mu^{(n)}_k)$ and
$\Sigma(t_k,x,\mu^{(n)}_k)$. We refer to this as the \emph{freezing of the measure flow}; it is the
key device behind the parametrix expansions of
Sections~\ref{sec:parametrix}--\ref{sec:comparison}.

Throughout we write $\sigma=\Sigma^{1/2}$ for the unique symmetric positive definite square root
of $\Sigma$, which is well defined by the uniform ellipticity condition
(Assumption~\ref{ass:elliptic} below). All regularity assumptions below are imposed on $\Sigma$
(and $b$), not on $\sigma$: the parametrix method involves only $\Sigma$, through the covariance
of the frozen Gaussian densities and the generator, so that $\sigma$ never enters the density
computation. Since the map $A\mapsto A^{1/2}$ is real analytic (hence $\mathrm{C}^1$, and therefore
Lipschitz) on a neighborhood of the compact set $\{A:c_0 I\preceq A\preceq C_0 I\}$, the
boundedness and Lipschitz regularity of $\Sigma$ in $(x,\mu)$ assumed in
Assumptions~\ref{ass:spatial} and \ref{ass:measLip} transfer to $\sigma=\Sigma^{1/2}$. This
transfer is used in the well-posedness argument below, which requires $\sigma$ to be Lipschitz in
$(x,\mu)$, and apart from that only in the proof of Lemma~\ref{lem:flowrate}, where It\^o's formula applied to the strong-coupling error again requires $\sigma$ to be bounded and Lipschitz in $(x,\mu)$.

\subsection{Assumptions}
\label{sec:assumptions}

We impose the following conditions.

\begin{assumption}[Spatial smoothness and boundedness]
\label{ass:spatial}
The functions $b$ and $\Sigma$ are bounded, uniformly in $(t,x,\mu)$, and are Lipschitz
continuous in $x$, uniformly in $(t,\mu)$. Their first-order derivatives $\partial_{x_i}b$
and $\partial_{x_i}\Sigma$ are bounded, uniformly in $(t,x,\mu)$. Furthermore, the
second-order derivatives $\partial^2_{x_ix_j}\Sigma$ exist and are bounded, uniformly in
$(t,x,\mu)$.
\end{assumption}

\begin{assumption}[Time regularity]
\label{ass:time}
The partial derivatives $\partial_t b$ and $\partial_t\Sigma$, as well as the mixed derivatives
$\partial_t\partial_x b$ and $\partial_t\partial_x\Sigma$, exist and are bounded, uniformly in
$(t,x,\mu)$.
\end{assumption}

\begin{assumption}[Uniform ellipticity]
\label{ass:elliptic}
There exist constants $0<c_0\le C_0<\infty$ such that, for all $t\in[0,1]$, $x\in\R^d$,
$\mu\in\PP_2(\R^d)$, and $\theta\in\R^d$,
\begin{equation}
c_0\,\norm{\theta}^2\ \le\ \theta^{\top}\Sigma(t,x,\mu)\,\theta\ \le\ C_0\,\norm{\theta}^2.
\end{equation}
\end{assumption}

\begin{assumption}[Measure Lipschitz continuity]
\label{ass:measLip}
There exists $\Lambda<\infty$ such that, for all $t\in[0,1]$, $x\in\R^d$, $\mu,\nu\in\PP_2(\R^d)$, and
every multi-index $\alpha$ with $\abs{\alpha}\le1$,
\begin{equation}
\norm{\partial_x^\alpha b(t,x,\mu)-\partial_x^\alpha b(t,x,\nu)}
+\norm{\partial_x^\alpha\Sigma(t,x,\mu)-\partial_x^\alpha\Sigma(t,x,\nu)}
\le \Lambda\,\mathcal{W}_2(\mu,\nu),
\end{equation}
and, with $\partial_t b$ and $\partial_t\Sigma$ in place of $\partial_x^\alpha b$ and
$\partial_x^\alpha\Sigma$,
\begin{equation}
\norm{\partial_t b(t,x,\mu)-\partial_t b(t,x,\nu)}
+\norm{\partial_t\Sigma(t,x,\mu)-\partial_t\Sigma(t,x,\nu)}
\le \Lambda\,\mathcal{W}_2(\mu,\nu).
\end{equation}
\end{assumption}

Since Assumptions~\ref{ass:spatial}, \ref{ass:elliptic} and \ref{ass:measLip} imply that $b$,
$\Sigma$, and hence $\sigma=\Sigma^{1/2}$, are bounded and Lipschitz in $(x,\mu)$, \eqref{eq:SDE}
admits a unique strong solution (cf.\ \cite{Sznitman1991}); moreover, no separate assumption on the
existence of a density is needed: existence of the density of $X_1$ is provided by the classical
parabolic theory \cite{Friedman1964,Aronson1967} under these Lipschitz and uniform ellipticity conditions, while the density of
$X^{(n)}_n$ is produced constructively by the discrete parametrix of
Section~\ref{sec:parametrix}; the parametrix expansions then also yield the bounds used below (see
Remark~\ref{rem:exist} for an alternative, self-contained derivation of the existence of $p$ that
does not appeal to the classical theory).

\begin{remark}
\label{rem:autotime}
Assumption~\ref{ass:time} requires only boundedness of $\partial_t b$, $\partial_t\Sigma$ and of the
mixed derivatives $\partial_t\partial_x b$, $\partial_t\partial_x\Sigma$; no higher time-regularity
is imposed. Indeed, combined with Assumption~\ref{ass:measLip} and the fact that the measure flow
$t\mapsto\mu_t$ is $\tfrac12$-H\"older continuous in $\mathcal{W}_2$ (a consequence of the increment estimate
of Lemma~\ref{lem:moments}), this implies that the frozen coefficients
$t\mapsto b(t,x,\mu_t)$ and $t\mapsto\Sigma(t,x,\mu_t)$ are $\tfrac12$-H\"older continuous in $t$,
uniformly in $x$; the same holds with the discrete flow $\mu^{(n)}$ in place of $\mu$, uniformly in
$n$ (by the third estimate of Lemma~\ref{lem:moments}). Writing $\rho=\rho(s,t)$ for the square root of the length of the time interval (see
\eqref{eq:rhodef}), this is precisely the regularity used
by the parametrix method: it yields a left-endpoint Riemann-sum error of order $O(\rho^2 n^{-1/2})$,
of the same order as the measure-flow error, so that the final rate $O(n^{-1/2})$ is not degraded.
\end{remark}

\subsection{Main result}
\label{sec:main}

\begin{theorem}
\label{thm:main}
Assume Assumptions~\ref{ass:spatial}--\ref{ass:measLip}. Then, for every $n\ge1$, the random
variable $X^{(n)}_n$ defined by the Euler scheme \eqref{eq:Euler} admits a smooth density
$p_n(x_0,\cdot)$, and the McKean--Vlasov solution $X_1$ of \eqref{eq:SDE} admits a density
$p(x_0,\cdot)$. Moreover, there exists a constant $c>0$, depending only on the coefficients, and a
constant $C<\infty$, independent of $n$, such that
\begin{equation}
\label{eq:maintheorem}
\sup_{y\in\R^d}\exp\Big(\tfrac1c\,\norm{y-x_0}^{2}\Big)\,\abs{p_n(x_0,y)-p(x_0,y)}
\ \le\ C\,n^{-1/2}.
\end{equation}
In particular, since the exponential weight in \eqref{eq:maintheorem} is bounded below by $1$, the
density $p_n(x_0,\cdot)$ computed by the Euler scheme \eqref{eq:Euler} converges to the density
$p(x_0,\cdot)$ of the original equation \eqref{eq:SDE} uniformly in $y\in\R^d$, at rate
$n^{-1/2}$ (equivalently, of order one half in the step size $h=1/n$):
\begin{equation}
\label{eq:uniformpointwise}
\sup_{y\in\R^d}\abs{p_n(x_0,y)-p(x_0,y)}
\ \le\ C\,n^{-1/2}.
\end{equation}
\end{theorem}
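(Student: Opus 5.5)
The plan follows the Konakov--Mammen parametrix strategy, applied after freezing the measure flow. Fix the continuous flow $\mu_t=\Law(X_t)$ and the discrete flow $\mu^{(n)}_k=\Law(X^{(n)}_k)$. Then $X$ becomes a classical time-inhomogeneous diffusion with generator
\[
L_t f(x)=b(t,x,\mu_t)\cdot\nabla f(x)+\tfrac12\tr\bigl(\Sigma(t,x,\mu_t)\nabla^2 f(x)\bigr),
\]
and the Euler scheme becomes a Markov chain with one-step Gaussian kernels of mean $x+h\,b(t_k,x,\mu^{(n)}_k)$ and covariance $h\,\Sigma(t_k,x,\mu^{(n)}_k)$.

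\textbf{Continuous parametrix.} For the diffusion, define the frozen Gaussian density $\tilde p^{\,y}(s,t,x,\cdot)$. Its coefficients are frozen at the terminal point $y$ and along $\mu_\cdot$, so its covariance is $\int_s^t\Sigma(u,y,\mu_u)\,\dd u$ and its mean drift is $\int_s^t b(u,y,\mu_u)\,\dd u$. Set $H=(L-\tilde L)\tilde p$ and write
\[
p=\sum_{r\ge0}\tilde p\otimes H^{(r)},
\]
where $\otimes$ is space-time convolution. The standard bound is $\abs{H(s,t,x,y)}\le C\rho^{-1}\varphi_{c\rho^2}(y-x)$, with $\varphi$ a Gaussian kernel and $\rho=\rho(s,t)$. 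This uses the bounded second derivatives of $\Sigma$, the Lipschitz property in $x$, and the $\tfrac12$-H\"older time regularity of Remark~\ref{rem:autotime}. Together with the kernel toolbox Lemma~\ref{lem:kernel}, it gives absolute convergence of the series with Gamma-function decay $C^r\Gamma(1/2)^r/\Gamma((r+1)/2)$. This yields existence of $p$ and a Gaussian upper bound.

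\textbf{Discrete parametrix.} For the chain, freeze at $y$ and $\mu^{(n)}$. The frozen chain is a sum of independent Gaussians, so $\tilde p_n^{\,y}(j,k,x,\cdot)$ is exactly Gaussian with covariance $h\sum_{i=j}^{k-1}\Sigma(t_i,y,\mu^{(n)}_i)$. Defining
\[
K_n=\frac{1}{h}(P_n-\tilde P_n)\tilde p_n
\]
with discrete convolution $\otimes_n$ over grid times, we get
\[
p_n=\sum_r\tilde p_n\otimes_n K_n^{(r)}.
\]
This proves existence and smoothness of $p_n$, since each term is a finite sum of smooth Gaussian integrals, with the same kind of Gaussian bounds uniform in $n$. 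A Taylor expansion of $K_n$ to second order shows $K_n=H_n+R_n$, where $H_n$ is the continuous-type kernel $(L^{(n)}-\tilde L^{(n)})\tilde p_n$ built from the discrete flow and $R_n$ is $O(n^{-1/2}\rho^{-2})$ times a Gaussian.

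\textbf{Comparison.} Write $p_n-p$ through a chain of intermediate expansions, one error source at a time:
\begin{enumerate}[label=(\roman*)]
\item Replace $\mu^{(n)}$ by $\mu$ in the frozen densities and kernels. Using Assumption~\ref{ass:measLip}, including the $\partial_x$ and $\partial_t$ parts, and $\sup_k\mathcal W_2(\mu^{(n)}_k,\mu_{t_k})\le Cn^{-1/2}$ from Lemma~\ref{lem:flowrate}, each Gaussian and kernel changes by $Cn^{-1/2}$ times the same Gaussian bound. Summing the series termwise keeps the factor $n^{-1/2}$.
\item Handle the time discretisation. Replacing left-endpoint Riemann sums of the coefficients by integrals costs $O(\rho^2n^{-1/2})$ by Remark~\ref{rem:autotime}.
\item Handle the lattice step, i.e.\ replace $\otimes_n$ by $\otimes$. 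This is the classical Konakov--Mammen estimate, giving $O(n^{-1/2})$ after using $\rho^{-1}$ integrability.
\end{enumerate}
Each error enters every term of the series. A telescoping identity of the form $\sum_r(A^{(r)}-B^{(r)})=\sum_r\sum_j A^{(j)}(A-B)B^{(r-1-j)}$, combined with the Gamma-function bounds, makes the total error $Cn^{-1/2}\varphi_{c}(y-x_0)$. Multiplying by $\exp(\norm{y-x_0}^2/c')$ for $c'>c$ larger gives \eqref{eq:maintheorem}, and \eqref{eq:uniformpointwise} follows immediately.

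\textbf{Main obstacle.} I expect the lattice comparison (iii), together with the singular near-diagonal terms where $\rho^2\sim h$, to be the hardest step. There $R_n$ only has the bound $n^{-1/2}\rho^{-2}$, which is not integrable in time on its own. I will first try the Konakov--Mammen device: isolate the last few steps and the first few steps, bound those directly by Gaussian estimates, and on the remaining range exploit a cancellation. The term $R_n$ integrates against a smooth density in $x$, so one derivative can be moved onto $\tilde p$ to gain back a factor $\rho$. If that fails, I would fall back on the rate $n^{-1/2}\log n$ and try to remove the logarithm afterwards.
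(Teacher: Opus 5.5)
Your architecture is the paper's: freeze both flows, expand both densities by parametrix, compare kernels along flow, time and lattice, telescope, and sum with Gamma bounds. (Your $K_n$ is the paper's exact kernel $H_n$; your $H_n$ is its auxiliary $K_n$, which the paper freezes at index $j+1$ rather than $j$.)

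\textbf{The gap.} It lies exactly in the step you flag, and that step decides the rate. With only $\abs{R_n}\le Cn^{-1/2}\rho^{-2}\varphi_{c,\rho}$, one $R_n$-slot in a chain costs $\sum_i h\,n^{-1/2}\rho_{ik}^{-2}=n^{-1/2}\sum_{m\le k-j}m^{-1}\sim n^{-1/2}\log n$. As it stands, your argument therefore gives $O(n^{-1/2}\log n)$, not the theorem.

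The integration-by-parts repair is not routine either. It works when $R_n$ is the first kernel, adjacent to $\tilde p$, since you then get $\sum_i h\rho_{ji}^{-1}\rho_{ik}^{-1}=O(1)$. In an interior slot, however, the derivative falls on the forward variable of the preceding kernel factor. That gradient is only $O(\rho^{-2})$ in the width of that slot, so the logarithm simply moves there. To avoid this you would also need a uniform-in-$n$ forward-gradient bound for the whole left chain, which you do not provide.

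\textbf{The missing idea.} Exploit the exact Gaussianity of the innovation one order further.

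The $n^{-1/2}\rho^{-2}$ part of $R_n$ comes from $n\cdot n^{-3/2}\int q(\theta)\,(\theta\otimes\theta\otimes\theta):\nabla^3\chi(x+\tau w(\theta))\,\dd\theta$. Since $q_x,q_y$ are even and the tensor is odd, this term vanishes except through the $\theta$-dependence of the evaluation point. Symmetrising under $\theta\mapsto-\theta$ trades it for one more derivative times $\tau n^{-1/2}\norm{\theta}$, i.e.\ $O(n^{-1}\rho^{-4})$ for each freezing point. The $x$-versus-$y$ difference then supplies $\norm{x-y}\lesssim\rho$, giving $O(n^{-1}\rho^{-3})$. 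This yields $\abs{H_n-K_n}\le C[n^{-1/2}\rho^{-1}+n^{-1}\rho^{-3}]\varphi_{c,\rho}$ (Lemma~\ref{lem:kernel-lattice}).

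The term $n^{-1}\rho^{-3}$ is pointwise more singular than yours, but it is summable along chains. Indeed $h\,n^{-1}\rho_{ik}^{-3}=n^{-1/2}(k-i)^{-3/2}$ and $\sum_{a\ge1}a^{-3/2}<\infty$. Hence a single such slot anywhere in an $r$-chain costs $Cn^{-1/2}\Gamma(\tfrac12)^{r-1}\Gamma^{-1}(\tfrac{r+1}2)\rho^{r-1}$ (Lemma~\ref{lem:hard-class}).

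The same summation also absorbs the $O(n^{-1}\rho^{-3})$ frozen-index mismatch between $\tilde p_n(j,k)$ and $\tilde p_n(j+1,k)$; your bookkeeping hides this inside the cruder $n^{-1/2}\rho^{-2}$. With the symmetrisation and the chain-sum bound in hand, the telescoping you describe closes at $O(n^{-1/2})$.
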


\begin{remark}[Existence of the density revisited]
\label{rem:exist}
The classical parabolic theory invoked above is not the only route to the
existence of $p$. Indeed, the only place where $p$ enters the proof of
Theorem~\ref{thm:main} is the identification $p=\sum_{r\ge0}(\tilde p*H^{*r})$ of
Lemma~\ref{lem:param-cont}; all the comparison lemmas of
Sections~\ref{sec:parametrix}--\ref{sec:comparison} and all the kernel bounds are
stated and proved in terms of the two measure flows $\mu$ and $\mu^{(n)}$ alone.
One may therefore define
$S(0,1,x_0,\cdot):=\sum_{r\ge0}(\tilde p*H^{*r})(0,1,x_0,\cdot)$ directly from the
flow $\mu$, without presupposing $p$. The estimates of
Section~\ref{sec:completion} then read
$\abs{p_n(0,n;x_0,y)-S(0,1,x_0,y)}\le Cn^{-1/2}\varphi_{c,1}(y-x_0)$, so that
the discrete densities $p_n$ form a Cauchy sequence in the weighted norm of
\eqref{eq:maintheorem}. Since $\mathcal{W}_2(\mu^{(n)}_n,\mu_1)\le Cn^{-1/2}$ by
Lemma~\ref{lem:flowrate}, and since $\mu^{(n)}_n$ (the law of $X^{(n)}_n$, that is, the discrete flow at time $1$) has density $p_n$, passing to
the limit (justified by that weighted bound) in
$\int_{\R^d}\varphi\,\dd\mu^{(n)}_n=\int_{\R^d}\varphi\,p_n$ for
$\varphi\in\mathrm C_b(\R^d)$ identifies $S$ as the density of $X_1$. Thus the
existence of $p$ can also be obtained without presupposing it; in this reading
\eqref{eq:param-cont} is recovered \emph{a posteriori} as the identity $S=p$.
We nevertheless keep the classical route as the primary one, being shorter and
not relying on the argument above.
\end{remark}

The remainder of the paper is devoted to the proof of Theorem~\ref{thm:main}.

\section{Preliminary Estimates}
\label{sec:prelim}

\subsection{Moment bounds and convergence of the measure flow}
\label{sec:moments}

For the measure-flow comparison below we use the continuous-time interpolation of the Euler
scheme. Write $s_n(t)=\lfloor nt\rfloor/n$ for the largest grid point not exceeding $t\in[0,1]$, and let
$\bar X^{(n)}$ solve
\begin{equation}
\label{eq:cont-euler}
\dd\bar X^{(n)}_t=b\big(s_n(t),\bar X^{(n)}_{s_n(t)},\mu^{(n)}_{s_n(t)}\big)\dd t
+\sigma\big(s_n(t),\bar X^{(n)}_{s_n(t)},\mu^{(n)}_{s_n(t)}\big)\dd W_t,
\qquad\bar X^{(n)}_0=x_0,
\end{equation}
where $\mu^{(n)}_{s_n(t)}$ is the law of the Euler scheme $X^{(n)}_{s_n(t)}$ at the grid point
$s_n(t)$. We write $\mu^{(n)}_t=\Law(\bar X^{(n)}_t)$, so that $\mu^{(n)}_{t_k}=\mu^{(n)}_k$ at
every grid point $t_k=kh$.

\begin{lemma}
\label{lem:moments}
Under Assumptions~\ref{ass:spatial} and \ref{ass:measLip}, for every $\kappa\ge1$ there exists a
constant $C_\kappa<\infty$, independent of $n$, such that
\begin{equation}
\sup_{t\in[0,1]}\E\norm{X_t}^{2\kappa}\le C_\kappa,\qquad
\sup_{0\le k\le n}\E\norm{X^{(n)}_k}^{2\kappa}\le C_\kappa,
\end{equation}
and
\begin{equation}
\E\norm{X_t-X_s}^{2\kappa}\le C_\kappa\,\abs{t-s}^{\kappa},\qquad 0\le s<t\le1,
\end{equation}
and, for the interpolated Euler process \eqref{eq:cont-euler},
\begin{equation}
\E\norm{\bar X^{(n)}_t-\bar X^{(n)}_s}^{2\kappa}\le C_\kappa\,\abs{t-s}^{\kappa},\qquad 0\le s<t\le1 .
\end{equation}
\end{lemma}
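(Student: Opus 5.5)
The key point is that Assumption~\ref{ass:spatial} makes $b$ and $\Sigma$ bounded uniformly in $(t,x,\mu)$. By Assumption~\ref{ass:elliptic}, $\sigma=\Sigma^{1/2}$ is then also bounded, with $\norm{\sigma}_\infty^2\le C_0$ in operator norm. All four estimates should therefore follow from the bounded-coefficient structure alone, without any Gronwall argument or use of the measure dependence. I would write $\norm{b}_\infty=:B$ and note that $\norm{\sigma}\le \sqrt{d\,C_0}$ in Frobenius norm. These constants do not depend on $n$, on the flows $\mu$, $\mu^{(n)}$, or on $\kappa$ except through universal factors.

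I would first prove the increment bounds. For the McKean--Vlasov solution, $X_t-X_s=\int_s^t b(r,X_r,\mu_r)\,\dd r+\int_s^t\sigma(r,X_r,\mu_r)\,\dd W_r$. By $\norm{u+v}^{2\kappa}\le 2^{2\kappa-1}(\norm{u}^{2\kappa}+\norm{v}^{2\kappa})$, the drift contributes at most $B^{2\kappa}(t-s)^{2\kappa}\le B^{2\kappa}(t-s)^{\kappa}$, since $t-s\le1$. For the stochastic integral, the Burkholder--Davis--Gundy inequality gives
\begin{equation*}
\E\Big\|\int_s^t\sigma\,\dd W_r\Big\|^{2\kappa}\le C_\kappa\,\E\Big(\int_s^t\norm{\sigma(r,X_r,\mu_r)}^2\dd r\Big)^{\kappa}\le C_\kappa (dC_0)^\kappa (t-s)^\kappa .
\end{equation*}
The interpolated Euler process \eqref{eq:cont-euler} is an It\^o process whose integrands $b(s_n(r),\bar X^{(n)}_{s_n(r)},\mu^{(n)}_{s_n(r)})$ and $\sigma(\cdots)$ are progressively measurable, being piecewise constant and adapted. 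The same two bounds therefore apply verbatim and give the fourth estimate with the same constant, independent of $n$.

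The moment bounds then follow immediately. Take $s=0$ and use $X_0=\bar X^{(n)}_0=x_0$. This gives $\E\norm{X_t}^{2\kappa}\le 2^{2\kappa-1}(\norm{x_0}^{2\kappa}+C_\kappa)$, and the same bound for $\bar X^{(n)}_t$, hence for $X^{(n)}_k=\bar X^{(n)}_{t_k}$. For the last identification I would check that \eqref{eq:cont-euler} reproduces \eqref{eq:Euler} at grid points. On $[t_k,t_{k+1})$ the coefficients are frozen, so $\bar X^{(n)}_{t_{k+1}}-\bar X^{(n)}_{t_k}=h\,b(\cdot)+\sigma(\cdot)(W_{t_{k+1}}-W_{t_k})$. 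Here $(W_{t_{k+1}}-W_{t_k})/\sqrt h$ are i.i.d.\ standard Gaussians, so the laws coincide, and that is all the moment statement needs.

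The only step I would treat with some care is well-posedness of the object in the fourth estimate. This is not really an obstacle. The flow $\mu^{(n)}_{s_n(t)}$ is defined recursively from the discrete chain, so \eqref{eq:cont-euler} is an explicit recursion with no fixed point, and finite moments at each step follow inductively from boundedness. Likewise, for $X$ we use the strong solution guaranteed after the assumptions, and the BDG step needs only $\E\int_0^1\norm{\sigma}^2<\infty$, which is trivial here. If one preferred not to rely on boundedness of $b$, the alternative would be a linear-growth Gronwall argument combined with $\mathcal W_2(\mu_t,\delta_0)^2=\E\norm{X_t}^2$. Under the stated assumptions this is unnecessary.
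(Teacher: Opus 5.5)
Your proof is correct and is essentially the paper's argument in Appendix~\ref{app:moments}: boundedness of the coefficients plus H\"older and Burkholder--Davis--Gundy, with no Gronwall step. The only reorganisation is that you obtain the moment bounds from the increment bounds at $s=0$, and you get the Euler moments through the grid-point identity $\Law(X^{(n)}_k)=\Law(\bar X^{(n)}_{t_k})$ (recorded in Step~1 of Appendix~\ref{app:flowrate}) rather than the paper's discrete BDG/Gaussian-moment bound on the chain. One small correction: ellipticity is not among the lemma's hypotheses and is not needed, since $\norm{\sigma}_F^2=\tr\Sigma\le d\,\norm{\Sigma}_\infty$ already follows from Assumption~\ref{ass:spatial}.
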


\begin{proof}
This is \cite[Lemma 3.4]{DingQiao2021}; a self-contained proof is deferred to
Appendix~\ref{app:moments}.
\end{proof}

\begin{lemma}
\label{lem:flowrate}
Under Assumptions~\ref{ass:spatial}--\ref{ass:measLip}, there exists a constant $C<\infty$,
independent of $n$, such that
\begin{equation}
\label{eq:flowrate}
\sup_{t\in[0,1]}\mathcal{W}_2\big(\mu^{(n)}_t,\mu_t\big)\ \le\ C\,n^{-1/2}.
\end{equation}
\end{lemma}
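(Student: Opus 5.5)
We argue by synchronous coupling. Let $X$ solve \eqref{eq:SDE} and $\bar X^{(n)}$ solve \eqref{eq:cont-euler}, both driven by the same Brownian motion $W$ and started at $x_0$. Since $\mu_t=\Law(X_t)$ and $\mu^{(n)}_t=\Law(\bar X^{(n)}_t)$, the pair $(X_t,\bar X^{(n)}_t)$ is a coupling of the two laws, so
\[
\mathcal{W}_2\big(\mu^{(n)}_t,\mu_t\big)^2\le \E\norm{X_t-\bar X^{(n)}_t}^2=:e(t).
\]
It therefore suffices to prove $\sup_{t\in[0,1]}e(t)\le Cn^{-1}$.

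Apply It\^o's formula to $\norm{X_t-\bar X^{(n)}_t}^2$ and take expectations; the stochastic integral is a true martingale because $b$ and $\sigma$ are bounded. This gives
\[
e(t)=\E\int_0^t\Big(2\big\langle X_s-\bar X^{(n)}_s,\ \delta b_s\big\rangle+\norm{\delta\sigma_s}_{HS}^2\Big)\,\dd s,
\]
where $\delta b_s=b(s,X_s,\mu_s)-b(s_n(s),\bar X^{(n)}_{s_n(s)},\mu^{(n)}_{s_n(s)})$ and $\delta\sigma_s$ is defined in the same way. We split each difference along the chain
\[
(s,X_s,\mu_s)\to(s,\bar X^{(n)}_s,\mu^{(n)}_s)\to(s,\bar X^{(n)}_{s_n(s)},\mu^{(n)}_{s_n(s)})\to(s_n(s),\bar X^{(n)}_{s_n(s)},\mu^{(n)}_{s_n(s)}).
\]
The three links are controlled as follows.

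\begin{enumerate}[label=(\roman*)]
\item The first link is bounded by $C\big(\norm{X_s-\bar X^{(n)}_s}+\mathcal{W}_2(\mu_s,\mu^{(n)}_s)\big)$. This uses Lipschitz continuity in $x$ (Assumption~\ref{ass:spatial}) and in $\mu$ (Assumption~\ref{ass:measLip} with $\alpha=0$). For $\sigma=\Sigma^{1/2}$ we also use the transfer of regularity via the analyticity of the matrix square root, as remarked in Section~\ref{sec:setting}. By the coupling inequality above, the square of this bound is at most $C\,\norm{X_s-\bar X^{(n)}_s}^2+C\,e(s)$.
\item The second link is bounded by $C\big(\norm{\bar X^{(n)}_s-\bar X^{(n)}_{s_n(s)}}+\mathcal{W}_2(\mu^{(n)}_s,\mu^{(n)}_{s_n(s)})\big)$. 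Both terms have second moment at most $C/n$ by the last estimate of Lemma~\ref{lem:moments} with $\kappa=1$, because $\mathcal{W}_2(\mu^{(n)}_s,\mu^{(n)}_{s_n(s)})^2\le\E\norm{\bar X^{(n)}_s-\bar X^{(n)}_{s_n(s)}}^2$.
\item The third link is bounded by $C/n$, using boundedness of $\partial_t b$ and $\partial_t\Sigma$ (Assumption~\ref{ass:time}), again transferred to $\sigma$.
\end{enumerate}

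For the diffusion term we simply square: $\norm{\delta\sigma_s}^2\le C\,e(s)+C/n+C/n^2$. For the drift term we use Young's inequality, $2\langle u,v\rangle\le\norm{u}^2+\norm{v}^2$, which bounds it by $C\,e(s)+C/n$. Here it matters that the second link is only $O(n^{-1/2})$ in $L^2$; we do not need the finer $O(n^{-1})$ weak cancellation. Altogether
\[
e(t)\le C\int_0^t e(s)\,\dd s+C\,n^{-1},
\]
and Gronwall's lemma gives $\sup_t e(t)\le Cn^{-1}$. Taking square roots yields \eqref{eq:flowrate}.

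The main obstacle is that the measure argument appears on both sides: the first link carries $\mathcal{W}_2(\mu_s,\mu^{(n)}_s)$, which is the unknown quantity itself. The synchronous coupling closes the loop, since it bounds $\mathcal{W}_2^2$ by $e(s)$ inside the same Gronwall inequality. A smaller point is checking that Lipschitz continuity of $\Sigma$ in $(x,\mu)$ and the bound on $\partial_t\Sigma$ indeed transfer to $\sigma=\Sigma^{1/2}$, which holds by the smoothness of the square root on the ellipticity set.
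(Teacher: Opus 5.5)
Your proposal is correct and follows essentially the same route as the paper's proof in Appendix~\ref{app:flowrate}. Both use synchronous coupling of $X$ and $\bar X^{(n)}$, It\^o's formula for the squared error, a Lipschitz decomposition of the coefficient differences in $(t,x,\mu)$ (with the transfer to $\sigma=\Sigma^{1/2}$), absorption of the measure discrepancy through $\mathcal{W}_2^2\le e(s)$, and Gronwall. The only cosmetic difference is where the measure comparison is made: you compare $\mu_s$ with $\mu^{(n)}_s$ at the same time $s$ and then use the $\tfrac12$-H\"older regularity of the interpolated Euler flow (last estimate of Lemma~\ref{lem:moments}). The paper instead compares $\mu^{(n)}_{s_n(s)}$ with $\mu_{s_n(s)}$ and uses the regularity of the true flow $\mu$, which is why it needs the running supremum $\Psi_t$; your variant avoids that supremum.
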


\begin{proof}
This is the Lipschitz special case of \cite[Theorem 4.1]{DingQiao2021}; a self-contained proof is
deferred to Appendix~\ref{app:flowrate}.
\end{proof}

\begin{remark}
Lemma~\ref{lem:flowrate} is the place where the measure flow is controlled. The convergence rate
$O(n^{-1/2})$ in \eqref{eq:flowrate} will be the bottleneck in the final density
rate.
\end{remark}

\subsection{Gaussian kernels}
\label{sec:kernels}

For $c>0$ and a width $\rho>0$ (always the square root of a time-interval length in the sense of \eqref{eq:rhodef}, so that in each application $\rho=\rho(s,t)$ or $\rho=\rho_{ab}$) define the Gaussian weights
$\varphi_c(u)=\exp\big(-c^{-1}\norm{u}^2\big)$ and
$\varphi_{c,\rho}(u)=\rho^{-d}\varphi_c(u/\rho)=\rho^{-d}\exp\big(-\norm{u}^2/(c\rho^2)\big)$,
$u\in\R^d$. We denote by $\mathcal N(m,\Theta)$ the $d$-dimensional Gaussian law with mean $m$ and
covariance $\Theta$, and by $g(m,\Theta;z)$ its density. The following elementary Gaussian
estimates are standard; see, e.g., \cite[Lemmas 3.10--3.11]{KonakovMammen2000} (see also
\cite{BallyTalay1996a,BallyTalay1996b}).

\begin{lemma}
\label{lem:kernel}
With $C_0$ as in Assumption~\ref{ass:elliptic}, and with $C$ (resp.\ $C_\alpha$) denoting a finite
constant depending only on the displayed parameters and on the dimension $d$, the following hold.
\begin{enumerate}[label=(\roman*),leftmargin=2.2em]
\item For all $c,c'>0$ and $\rho,\rho'>0$,
\begin{equation}
\int_{\R^d}\varphi_{c,\rho}(y-x)\,\varphi_{c',\rho'}(x-z)\,\dd x
\le\pi^{d/2}\max(c,c')^{d/2}\,\varphi_{c+c',\sqrt{\rho^2+\rho'^2}}(y-z).
\end{equation}
\item For all $c>0$ and $\rho>0$,
\begin{equation}
\norm{y-x}\,\varphi_{c,\rho}(y-x)\le C\,\rho\,\varphi_{c+1,\rho}(y-x).
\end{equation}
\item For all $\alpha,\beta\ge-1$,
\begin{equation}
\int_s^t (t-u)^{\alpha/2}(u-s)^{\beta/2}\dd u
=(t-s)^{(\alpha+\beta)/2+1}\,\mathrm{B}\big(\tfrac\alpha2+1,\tfrac\beta2+1\big),
\end{equation}
where $\mathrm{B}(\cdot,\cdot)$ denotes the Beta function.
\item \emph{(Gaussian derivative control)} Whenever $c\rho^2 I\preceq\Theta\preceq C_0\rho^2 I$, possibly enlarging $c$,
\begin{equation}
\abs{\partial^\alpha_z g(m,\Theta;z)}\le C_\alpha\,\rho^{-|\alpha|}\,\varphi_{c,\rho}(z-m).
\end{equation}
\item \emph{(Gaussian comparison)} For $c\rho^2 I\preceq\Theta,\Theta'\preceq C_0\rho^2 I$ and $\norm{m},\norm{m'}\le C\rho^2$,
possibly enlarging $c$,
\begin{equation}
\abs{\partial^\alpha_z g(m,\Theta;z)-\partial^\alpha_z g(m',\Theta';z)}
\le C_\alpha\Big[\norm{m-m'}\,\rho^{-1}+\norm{\Theta-\Theta'}\,\rho^{-2}\Big]
\rho^{-|\alpha|}\,\varphi_{c,\rho}(z).
\end{equation}
\end{enumerate}
\end{lemma}

The proof of Lemma~\ref{lem:kernel} is elementary and is deferred to
Appendix~\ref{app:kernels}.

\section{The Parametrix Expansions}
\label{sec:parametrix}

In this section we develop the parametrix expansion for the transition densities $p$ and $p_n$,
following McKean--Singer \cite{McKeanSinger1967} in the form used by Konakov--Mammen
\cite{KonakovMammen2000}: each density is written as a series of iterated convolutions of a frozen
Gaussian density with a kernel that measures the freezing error. The series converges absolutely
(by Lemma~\ref{lem:kernbound}); it is infinite in the continuous case and finite in the discrete
case (where the chain has only $n$ steps). The two expansions are compared in Section~\ref{sec:comparison}.

\subsection{Frozen Gaussian densities and the frozen processes}
\label{sec:frozen}

The parametrix method starts from an exact Gaussian model obtained from \eqref{eq:SDE} by freezing
the coefficients. For a deterministic measure flow $\nu=(\nu_t)_{t\in[0,1]}$ the flow enters only as a
deterministic parameter in the third argument of $b$ and $\Sigma$, never as a variable to be
differentiated. In the continuous problem we
freeze the flow at the true flow $\mu$, in the discrete problem at the Euler flow $\mu^{(n)}$; in
both cases we additionally freeze the spatial argument at the terminal point $y$.

Freezing the measure flow is exact. By Assumptions~\ref{ass:spatial}, \ref{ass:elliptic} and
\ref{ass:measLip} the coefficients $b$ and $\Sigma$ (hence $\sigma$) are bounded and Lipschitz in
$(x,\mu)$, so \eqref{eq:SDE} has a unique strong solution with a unique measure flow $\mu$
(cf.\ \cite{Sznitman1991}), and the Euler scheme \eqref{eq:Euler} is well defined as a recursion
with Gaussian increments. Consequently, once the measure argument is frozen at the true flow $\mu$,
\eqref{eq:SDE} reduces to a classical time-inhomogeneous (Markov) diffusion with coefficients
$(t,x)\mapsto b(t,x,\mu_t)$ and $(t,x)\mapsto\Sigma(t,x,\mu_t)$; by uniqueness this diffusion has law
$\mu_t$ at every $t$, and hence exactly the density $p$ of the original solution. Thus the
measure-freezing is not an approximation: it is the same object viewed as a classical diffusion. The
identification above holds along the true initial datum $(s,x)=(0,x_0)$; for other $(s,x)$ the object
$p(s,t,x,\cdot)$ expanded below is the transition density of the diffusion whose coefficients are frozen
at the true flow, and it is this transition density that the parametrix requires. The only genuine
freezing, which does change the equation, is the spatial freezing at the terminal point
$y$; its error is governed by the kernel $H$ defined below.

\textbf{The continuous frozen process and its density.} Freezing the coefficients of \eqref{eq:SDE}
at $(y,\mu_u)$ yields the affine Gaussian process
\begin{equation}
\label{eq:mvtil}
\dd\tilde Y_u=b(u,y,\mu_u)\dd u+\sigma(u,y,\mu_u)\dd W_u,\qquad \tilde Y_s=x,
\end{equation}
whose drift and diffusion coefficients are independent of the state variable and depend on the time $u$ only, through the frozen point $(y,\mu_u)$. Consequently the increment
$\tilde Y_t-\tilde Y_s=\int_s^t b(u,y,\mu_u)\dd u+\int_s^t\sigma(u,y,\mu_u)\dd W_u$ is a Gaussian
vector with mean
\begin{equation}
\bar m(s,t,y)=\int_s^t b(u,y,\mu_u)\dd u
\end{equation}
and covariance
\begin{equation}
\bar\Sigma(s,t,y)=\int_s^t\Sigma(u,y,\mu_u)\dd u,
\end{equation}
so that the transition density $\tilde p(s,t,x,\cdot)$ of $\Law(\tilde Y_t\mid\tilde Y_s=x)$ is exactly
Gaussian:
\begin{multline}
\label{eq:frozen-cont}
\tilde p(s,t,x,y)=(2\pi)^{-d/2}\det\bar\Sigma(s,t,y)^{-1/2}\\
\times\exp\Big[-\tfrac12\big(y-x-\bar m(s,t,y)\big)^{\top}\bar\Sigma(s,t,y)^{-1}
\big(y-x-\bar m(s,t,y)\big)\Big].
\end{multline}
By Assumption~\ref{ass:elliptic}, $\bar\Sigma(s,t,y)\succeq c_0(t-s)\,I$, so $\tilde p$ is nondegenerate
for $s<t$. This Gaussian density is the \emph{principal term} of the parametrix: it captures the
dominant local behaviour of the true transition density $p$, and the kernel $H$ defined below
measures exactly the error made by the freezing.

\textbf{The discrete frozen chain and its density.} Similarly, freezing the coefficients of the
Euler scheme \eqref{eq:Euler} at $(y,\mu^{(n)}_i)$ gives the frozen chain
\begin{equation}
\tilde X^{(n)}_{i+1}=\tilde X^{(n)}_i+h\,b(t_i,y,\mu^{(n)}_i)+\sqrt{h}\,\sigma(t_i,y,\mu^{(n)}_i)\,\xi_{i+1},
\qquad\tilde X^{(n)}_j=x,
\end{equation}
a sum of independent Gaussian increments, with mean
\begin{equation}
\bar m_n(j,k;y)=\sum_{i=j}^{k-1}h\,b(t_i,y,\mu^{(n)}_i)
\end{equation}
and covariance
\begin{equation}
\bar\Sigma_n(j,k;y)=\sum_{i=j}^{k-1}h\,\Sigma(t_i,y,\mu^{(n)}_i),
\end{equation}
so that its transition density $\tilde p_n(j,k;x,y)$ is again exactly Gaussian:
\begin{multline}
\label{eq:frozen-disc}
\tilde p_n(j,k;x,y)=(2\pi)^{-d/2}\det\bar\Sigma_n(j,k;y)^{-1/2}\\
\times\exp\Big[-\tfrac12\big(y-x-\bar m_n(j,k;y)\big)^{\top}\bar\Sigma_n(j,k;y)^{-1}
\big(y-x-\bar m_n(j,k;y)\big)\Big].
\end{multline}
This exact Gaussianity, specific to the Brownian-driven setting, is the key simplification of the
present analysis. Here and throughout the paper we adopt the following convention: a semicolon
separates the (discrete) time indices or parameters of an object from its spatial evaluation
variables. Thus in a discrete (Euler-chain) object it separates the discrete time indices from the
spatial variables (e.g.\
$\tilde p_n(j,k;x,y)$, $\bar m_n(j,k;y)$, $\bar\Sigma_n(j,k;y)$), and the
Gaussian density $g(m,\Theta;z)$ uses the same semicolon to separate its parameters $(m,\Theta)$
from the evaluation point $z$; the corresponding continuous-time objects are written with commas
(e.g.\ $\tilde p(s,t,x,y)$, $\bar m(s,t,y)$, $\bar\Sigma(s,t,y)$).

\subsection{Generators and kernels}
\label{sec:kernels-def}

Throughout this subsection the measure flow $\nu$ is \emph{fixed}; all objects below are therefore
classical time-inhomogeneous generators and kernels, and each identity holds for every fixed $\nu$,
independently of any property of the measure flow.

\textbf{Continuous generators.} By It\^o's formula, the generator of the (measure-frozen) diffusion
$\dd X_t=b(t,X_t,\nu_t)\dd t+\sigma(t,X_t,\nu_t)\dd W_t$ is
\begin{equation}
\label{eq:Lnu}
L^\nu f(s,x)=\sum_i b_i(s,x,\nu_s)\,\partial_{x_i}f+\frac12\sum_{i,j}\Sigma_{ij}(s,x,\nu_s)\,\partial^2_{x_ix_j}f,
\end{equation}
and its spatial freezing at a terminal point $y$ is the operator
\begin{equation}
\label{eq:Ltilden}
\tilde L^\nu f(s,x)=\sum_i b_i(s,y,\nu_s)\,\partial_{x_i}f+\frac12\sum_{i,j}\Sigma_{ij}(s,y,\nu_s)\,\partial^2_{x_ix_j}f,
\end{equation}
with coefficients evaluated at $(s,y,\nu_s)$ but still acting in the $x$-variable. In
\eqref{eq:Lnu}--\eqref{eq:Ltilden} the diffusion term is the trace
$\frac12\tr(\Sigma\,\nabla^2 f)=\frac12\sum_{i,j}\Sigma_{ij}\partial^2_{x_ix_j}f$, with
$\Sigma=\sigma\sigma^{\top}$; this is exact and involves no reordering of matrix factors.

\textbf{The continuous kernel.} Set
\begin{equation}
\label{eq:Hdef}
H(s,t,x,y)=(L^\mu-\tilde L^\mu)\tilde p(s,t,x,y),
\end{equation}
i.e.\ (with the frozen coefficients)
\begin{multline}
\label{eq:kernelH}
H(s,t,x,y)=\frac12\sum_{i,j}\big(\Sigma_{ij}(s,x,\mu_s)-\Sigma_{ij}(s,y,\mu_s)\big)
\frac{\partial^2\tilde p}{\partial x_i\partial x_j}(s,t,x,y)\\
+\sum_i\big(b_i(s,x,\mu_s)-b_i(s,y,\mu_s)\big)\frac{\partial\tilde p}{\partial x_i}(s,t,x,y).
\end{multline}
The kernel $H$ is the ``freezing error'' generator: the difference between the true generator
$L^\mu$ and its frozen version $\tilde L^\mu$, applied to the frozen density $\tilde p$.

\textbf{Discrete generators and the discrete kernel.} For the chain \eqref{eq:Euler} (frozen at
$\nu=\mu^{(n)}$), the one-step transition from $x$ at step $i$ to $z$ at step $i+1$ has the Gaussian
density
\begin{equation}
\label{eq:onestep}
r^{(n)}_i(x,z)=g\big(x+h\,b(t_i,x,\mu^{(n)}_i),\ h\,\Sigma(t_i,x,\mu^{(n)}_i);z\big),
\end{equation}
with $\mathcal N(m,\Theta)$ as in Section~\ref{sec:kernels}; let
$\tilde r^{(n),y}_i(x,\cdot)$ denote the same density with the coefficients frozen at $y$. We define
the discrete backward generator, acting in the first spatial variable, by
\begin{equation}
\label{eq:discgen}
(L_n f)(i,k;x,y)=\frac1h\Big[\int_{\R^d}r^{(n)}_i(x,z)\,f(i+1,k;z,y)\dd z-f(i,k;x,y)\Big],
\end{equation}
and its frozen counterpart
\begin{equation}
\label{eq:Lntilde}
(\tilde L_n f)(i,k;x,y)=\frac1h\Big[\int_{\R^d}\tilde r^{(n),y}_i(x,z)\,f(i+1,k;z,y)\dd z-f(i,k;x,y)\Big].
\end{equation}
Thus $L_n,\tilde L_n$ are the exact discrete counterparts of \eqref{eq:Lnu}--\eqref{eq:Ltilden}: the
one-step transition operator minus the identity, divided by $h$. The discrete kernel is then defined
\emph{exactly} by
\begin{equation}
\label{eq:Hn-exact}
H_n(i,k;x,y):=(L_n-\tilde L_n)\tilde p_n(i,k;x,y)
=\frac1h\int_{\R^d}\big[r^{(n)}_i(x,z)-\tilde r^{(n),y}_i(x,z)\big]\tilde p_n(i+1,k;z,y)\dd z,
\end{equation}
where the second equality is immediate from the definitions (the identity term cancels); note also
that $\tilde L_n\tilde p_n=0$ identically, by the exact Chapman--Kolmogorov equation of the frozen
chain, so that $H_n=L_n\tilde p_n$. No approximation of the generator by a differential operator
is involved in \eqref{eq:Hn-exact}.

\subsection{Convolutions}
\label{sec:convolution}

Define the continuous convolution
\begin{equation}
\label{eq:conv}
(\phi*\psi)(s,t,x,y)=\int_s^t\dd u\int_{\R^d}\phi(s,u,x,z)\,\psi(u,t,z,y)\dd z,
\end{equation}
and the discrete convolution
\begin{equation}
\label{eq:convn}
(\phi\circledast\psi)(j,k;x,y)=\sum_{i=j}^{k-1}h\int_{\R^d}\phi(j,i;x,z)\,\psi(i,k;z,y)\dd z.
\end{equation}
\textbf{Convention on chains.} Whenever a grid convolution is expanded we write the resulting sum over
chains of grid indices in the inclusive form $j=i_0\le i_1<\cdots<i_r<i_{r+1}=k$, that is, $j\le
i_1<\cdots<i_r\le k-1$; this is exactly the range of \eqref{eq:convn}, so that an $r$-fold grid convolution
at $(j,k)$ sums over the $\binom{k-j}{r}$ choices of indices and vanishes identically for $r>k-j$. The only
terms not of the generic form are those with $i_1=i_0$, in which the leading factor is evaluated at
coinciding times. If that factor vanishes there, the term is simply absent; this holds for the kernel powers
on the diagonal, $K_n^{\circledast r}(j,j)=H^{\mathrm{g}}_n{}^{\circledast r}(j,j)=0$ for $r\ge1$ (for $r\ge2$ the diagonal convolution sum is empty, while for $r=1$ the single kernels are taken to vanish on the diagonal, $K_n(j,j)=H^{\mathrm{g}}_n(j,j)=0$), as well as for the difference-type
objects $\mathsf{D}_n,\mathsf{E}_n,\varepsilon\circledast K_n$ occurring as leading factors below. If
instead the leading factor is a density, that density collapses to $\delta_x$, one Gaussian factor disappears
from the chain, and the term is the one carrying the prefactor $h=n^{-1}$ in the derivation of the discrete
parametrix, namely $h\,H_n$; this is also the case treated separately as the initial cell $i=j$ in Step~1 of
the proof of Lemma~\ref{lem:T1}.

In either case the term is covered by the estimates below: reading a vanishing width as
$\varphi_{c,0}:=(\pi c)^{d/2}\delta_0$, the $\rho\downarrow0$ limit of the exact convolution identity
$\varphi_{c,\rho}*\varphi_{c,\rho'}=
(\pi c)^{d/2}\varphi_{c,\sqrt{\rho^2+\rho'^2}}$, the collapsed chain sum is controlled by the corresponding
generic estimate, each collapsed term carrying one factor $(\pi c)^{d/2}$ less, which is absorbed into the
constants $C_1$ and $M$; the convolution identity and the Beta-type integrals below are unchanged in form.
Equivalently, every chain sum below may be read as if all widths were strictly positive, so that the generic
estimates always remain upper bounds for it. All the chain sums of this paper are read in this sense.

Set $\phi* H^{*0}=\phi$ and $\phi* H^{*r}=(\phi* H^{*(r-1)})* H$; similarly for
$\circledast$.

\subsection{Bounds on the kernels}
\label{sec:kernbound}

\begin{lemma}
\label{lem:kernbound}
Under Assumptions~\ref{ass:spatial} and \ref{ass:elliptic}, there are constants $C_1<\infty$ and
$c>0$ such that, with $\rho=\rho(s,t)$ (see \eqref{eq:rhodef}),
\begin{equation}
\label{eq:boundH}
\abs{H(s,t,x,y)}\le C_1\,\rho^{-1}\,\varphi_{c,\rho}(y-x),
\end{equation}
and, for every $r\ge0$,
\begin{equation}
\label{eq:bound-r}
\abs{\tilde p* H^{*r}(s,t,x,y)}
\le C_1^{r+1}\rho^{r}\,\Gamma^{-1}\big(1+\tfrac r2\big)\,\varphi_{c,\rho}(y-x).
\end{equation}
\end{lemma}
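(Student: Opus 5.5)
The plan is the standard parametrix argument: first a pointwise bound on $H$ from the Gaussian toolbox, then induction on $r$ using the convolution identity of Lemma~\ref{lem:kernel}(i) and the Beta integral of Lemma~\ref{lem:kernel}(iii).

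\textbf{Step 1: the bound \eqref{eq:boundH}.} Write $\tilde p(s,t,x,y)=g(\bar m(s,t,y),\bar\Sigma(s,t,y);y-x)$. Derivatives in $x$ are then, up to sign, derivatives of $g$ in its evaluation variable $z=y-x$, and the parameters depend only on $y$. By Assumptions~\ref{ass:spatial} and \ref{ass:elliptic} we have
\[
c_0\rho^2 I\preceq\bar\Sigma\preceq C_0\rho^2 I,\qquad \norm{\bar m}\le\norm{b}_\infty\rho^2 .
\]
Lemma~\ref{lem:kernel}(iv) then gives
\[
\abs{\partial_x^\alpha\tilde p}\le C_\alpha\,\rho^{-\abs{\alpha}}\varphi_{c,\rho}(y-x-\bar m).
\]
Since $\norm{\bar m}\le C\rho^2\le C\rho$ for $\rho\le1$, the shift $\bar m$ can be removed by enlarging $c$ and the constant, using $\norm{u-m}^2\ge\frac12\norm u^2-\norm m^2$.

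The Lipschitz property in $x$ gives
\[
\norm{\Sigma(s,x,\mu_s)-\Sigma(s,y,\mu_s)}\le C\norm{x-y},\qquad \norm{b(s,x,\mu_s)-b(s,y,\mu_s)}\le C\norm{x-y}.
\]
Inserting these into \eqref{eq:kernelH}, the second-order term is bounded by $C\norm{x-y}\rho^{-2}\varphi_{c,\rho}(y-x)$. Lemma~\ref{lem:kernel}(ii) turns this into $C\rho^{-1}\varphi_{c+1,\rho}(y-x)$. The first-order term is bounded by $C\norm{x-y}\rho^{-1}\varphi_{c,\rho}\le C\varphi_{c+1,\rho}\le C\rho^{-1}\varphi_{c+1,\rho}$, again because $\rho\le1$. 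After renaming $c$, this proves \eqref{eq:boundH}.

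\textbf{Step 2: induction for \eqref{eq:bound-r}.} The case $r=0$ is the case $\alpha=0$ of Step 1. Suppose the bound holds for $r-1$. Then
\[
\abs{\tilde p*H^{*r}}\le\int_s^t C_1^{r}(u-s)^{(r-1)/2}\Gamma^{-1}\big(\tfrac{r+1}2\big)\,C_1(t-u)^{-1/2}\int\varphi_{c,\rho(s,u)}(z-x)\,\varphi_{c,\rho(u,t)}(y-z)\,\dd z\,\dd u .
\]
The inner integral is bounded by $C\varphi_{2c,\rho(s,t)}(y-x)$ via Lemma~\ref{lem:kernel}(i). The time integral is evaluated by Lemma~\ref{lem:kernel}(iii):
\[
\int_s^t(t-u)^{-1/2}(u-s)^{(r-1)/2}\,\dd u=\rho^{r}\,\mathrm B\big(\tfrac12,\tfrac{r+1}2\big)=\rho^r\,\frac{\Gamma(\tfrac12)\Gamma(\tfrac{r+1}2)}{\Gamma(1+\tfrac r2)} .
\]
The factor $\Gamma(\tfrac{r+1}2)$ cancels the inductive one, leaving $\rho^r\Gamma^{-1}(1+\tfrac r2)$ as required.

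\textbf{Main obstacle: the width does not stay fixed.} Lemma~\ref{lem:kernel}(i) returns width $2c$, not $c$, and the constant $\pi^{d/2}\max(c,c')^{d/2}\sqrt\pi$ has to be absorbed into $C_1$. A naive induction would double $c$ at every step and break uniformity in $r$. To avoid this, I would use the exact identity recorded in Section~\ref{sec:convolution},
\[
\int\varphi_{c,\rho}(z-x)\,\varphi_{c,\rho'}(y-z)\,\dd z=(\pi c)^{d/2}\varphi_{c,\sqrt{\rho^2+\rho'^2}}(y-x),
\]
which holds with the same $c$ on both sides: it is a Gaussian convolution of variances $c\rho^2/2$ and $c\rho'^2/2$.

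The induction is therefore run with one fixed $c$ throughout, namely the $c$ obtained from Step 1 for the $H$ factor and, after enlarging, also for $\tilde p$. The constant is set to $C_1:=\max\{\text{Step 1 constant}\}\cdot(\pi c)^{d/2}\sqrt\pi$, so each inductive step costs exactly one factor $C_1$. Finally, when $u$ approaches $s$ or $t$ the widths degenerate; this is handled by the $\varphi_{c,0}$ convention, and the Beta integral converges because both exponents exceed $-1$.
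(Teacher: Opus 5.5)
Your proof is correct and follows the paper's argument: the same Gaussian derivative bounds with the mean shift absorbed into $c$, the same Lipschitz-plus-weight estimate for $H$, and the same key device of the exact same-$c$ convolution identity combined with the Beta integral. The only difference is organizational. The paper expands $\tilde p*H^{*r}$ as an $r$-fold integral and evaluates the nested time integral $J_r$ by a separate induction, whereas you induct directly on the bound. In your displayed inductive step, bound the $H$ factor with the raw Step~1 constant $C_H$ rather than $C_1$; the per-step cost $C_H(\pi c)^{d/2}\sqrt\pi\le C_1$ then closes the induction, which is what you intend.
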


\begin{proof}
\textbf{Step 1: derivative bounds on the frozen density.} Write
$\tilde p(s,t,x,y)=g(\bar m,\bar\Sigma;y-x)$ with
\begin{equation}
g(z)=(2\pi)^{-d/2}\det\bar\Sigma(s,t,y)^{-1/2}
\exp\Big[-\tfrac12\big(z-\bar m(s,t,y)\big)^{\top}\bar\Sigma(s,t,y)^{-1}\big(z-\bar m(s,t,y)\big)\Big],
\end{equation}
and abbreviate $\bar m=\bar m(s,t,y)$, $\bar\Sigma=\bar\Sigma(s,t,y)$. By uniform ellipticity
(Assumption~\ref{ass:elliptic}) and $\bar\Sigma=\int_s^t\Sigma(u,y,\mu_u)\dd u$,
\begin{equation}
c_0\rho^2\norm{\theta}^2\le\theta^{\top}\bar\Sigma\,\theta\le C_0\rho^2\norm{\theta}^2\qquad(\forall\,\theta\in\R^d),
\end{equation}
which is the ellipticity condition integrated over $[s,t]$; equivalently
$\lambda_{\min}(\bar\Sigma)\ge c_0\rho^2$ and $\lambda_{\max}(\bar\Sigma)\le C_0\rho^2$. Hence
\begin{equation}
\det\bar\Sigma^{-1/2}\le c_0^{-d/2}\rho^{-d},\qquad
\norm{\bar m}\le\norm{b}_\infty(t-s)=C\rho^2,
\end{equation}
and, for the inverse,
\begin{equation}
\theta^{\top}\bar\Sigma^{-1}\theta\ge C_0^{-1}\rho^{-2}\norm{\theta}^2,\qquad
\norm{\bar\Sigma^{-1}\theta}\le c_0^{-1}\rho^{-2}\norm{\theta},\qquad
\abs{\bar\Sigma^{-1}_{ij}}\le c_0^{-1}\rho^{-2}.
\end{equation}
Hence
\begin{equation}
\label{eq:gbound}
g(z)\le C\rho^{-d}\exp\Big[-\frac{\norm{z-\bar m}^2}{2C_0\rho^2}\Big].
\end{equation}
Since $\partial_{x_i}\tilde p=-\partial_{z_i}g$ and, for $z=y-x$,
\begin{equation}
\partial_{z_i}g(z)=-g(z)\big[\bar\Sigma^{-1}(z-\bar m)\big]_i,\qquad
\partial^2_{z_iz_j}g(z)=g(z)\Big(\big[\bar\Sigma^{-1}(z-\bar m)\big]_i
\big[\bar\Sigma^{-1}(z-\bar m)\big]_j-\bar\Sigma^{-1}_{ij}\Big),
\end{equation}
we obtain, using \eqref{eq:gbound} and the bounds
$\norm{\bar\Sigma^{-1}(z-\bar m)}\le C\rho^{-2}\norm{z-\bar m}$,
$\abs{\bar\Sigma^{-1}_{ij}}\le C\rho^{-2}$, together with the elementary inequalities
\begin{equation}
\norm{z-\bar m}\,e^{-\frac{\norm{z-\bar m}^2}{2C_0\rho^2}}\le C\rho\,e^{-\frac{\norm{z-\bar m}^2}{4C_0\rho^2}},\qquad
\norm{z-\bar m}^2\,e^{-\frac{\norm{z-\bar m}^2}{2C_0\rho^2}}\le C\rho^2\,e^{-\frac{\norm{z-\bar m}^2}{4C_0\rho^2}},
\end{equation}
the bounds
\begin{equation}
\abs{\partial_{x_i}\tilde p}\le C\rho^{-d-1}e^{-\frac{\norm{z-\bar m}^2}{4C_0\rho^2}},\qquad
\abs{\partial^2_{x_ix_j}\tilde p}\le C\rho^{-d-2}e^{-\frac{\norm{z-\bar m}^2}{4C_0\rho^2}}.
\end{equation}
Finally, since $\norm{\bar m}\le C\rho^2$ and $\rho\le1$, we have
$e^{-\frac{\norm{z-\bar m}^2}{4C_0\rho^2}}\le C e^{-\frac{\norm z^2}{c\rho^2}}$ for some $c>0$: if
$\norm z\ge2\norm{\bar m}$ then $\norm{z-\bar m}\ge\norm z-\norm{\bar m}\ge\tfrac12\norm z$, while if
$\norm z<2\norm{\bar m}\le2C\rho^2$ then $1\le e^{4C^2/c}e^{-\frac{\norm z^2}{c\rho^2}}$. Consequently,
with $z=y-x$,
\begin{equation}
\label{eq:dpbound}
\abs{\partial^\alpha_x\tilde p(s,t,x,y)}\le C_\alpha\,\rho^{-\abs{\alpha}}\,\varphi_{c,\rho}(y-x),\qquad \abs{\alpha}=0,1,2,
\end{equation}
where the case $\abs{\alpha}=0$ follows from \eqref{eq:gbound} together with the conversion of the preceding sentence.

\textbf{Step 2: the bound \eqref{eq:boundH}.} From \eqref{eq:kernelH}, by the Lipschitz continuity of
$\Sigma$ and $b$ in $x$ (Assumption~\ref{ass:spatial}),
\begin{equation}
\abs{\Sigma_{ij}(s,x,\mu_s)-\Sigma_{ij}(s,y,\mu_s)}\le C\norm{x-y},\qquad
\abs{b_i(s,x,\mu_s)-b_i(s,y,\mu_s)}\le C\norm{x-y},
\end{equation}
so, using \eqref{eq:dpbound},
\begin{equation}
\begin{aligned}
\abs{H(s,t,x,y)}
&\le\frac12\sum_{i,j}\abs{\Sigma_{ij}(s,x,\mu_s)-\Sigma_{ij}(s,y,\mu_s)}\,\abs{\partial^2_{x_ix_j}\tilde p}\\
&\quad+\sum_i\abs{b_i(s,x,\mu_s)-b_i(s,y,\mu_s)}\,\abs{\partial_{x_i}\tilde p}\\
&\le C\norm{x-y}\Big[\rho^{-2}+\rho^{-1}\Big]\varphi_{c,\rho}(y-x).
\end{aligned}
\end{equation}
By item~(ii) of Lemma~\ref{lem:kernel}, $\norm{x-y}\,\varphi_{c,\rho}(y-x)\le C\rho\,\varphi_{c+1,\rho}(y-x)$, hence
\begin{equation}
\abs{H(s,t,x,y)}\le C\rho\big[\rho^{-2}+\rho^{-1}\big]\varphi_{c+1,\rho}(y-x)
=C\big[\rho^{-1}+1\big]\varphi_{c+1,\rho}(y-x)\le C\rho^{-1}\varphi_{c+1,\rho}(y-x),
\end{equation}
because $\rho\le1$ gives $1\le\rho^{-1}$. Renaming $c+1$ as $c$ yields \eqref{eq:boundH}; moreover, since the weight $\varphi_{c,\rho}$ is increasing in $c$, we may enlarge $c$ once and for all so that the same constant $c$ appears in \eqref{eq:dpbound} and in \eqref{eq:boundH} (this unification is used in the convolution identity of Step~4).

\textbf{Step 3: a nested time integral.} For $r\ge1$ set $u_{r+1}=t$ and
\begin{equation}
J_r(s,t):=\int_{s<u_1<\cdots<u_r<t}\prod_{k=1}^{r}(u_{k+1}-u_k)^{-1/2}\dd u_1\cdots\dd u_r.
\end{equation}
We claim
\begin{equation}
J_r(s,t)=\frac{\Gamma(1/2)^{r}}{\Gamma(1+r/2)}(t-s)^{r/2}.
\end{equation}
For $r=1$,
\begin{equation}
J_1(s,t)=\int_s^t(t-u_1)^{-1/2}\dd u_1=2(t-s)^{1/2}
=\frac{\Gamma(1/2)}{\Gamma(3/2)}(t-s)^{1/2},
\end{equation}
since $\Gamma(1/2)=\sqrt\pi$ and $\Gamma(3/2)=\sqrt\pi/2$. Suppose the formula holds for $r-1$; then
\begin{equation}
\begin{aligned}
J_r(s,t)&=\int_s^t(t-u_r)^{-1/2}J_{r-1}(s,u_r)\dd u_r\\
&=\frac{\Gamma(1/2)^{r-1}}{\Gamma(1+(r-1)/2)}\int_s^t(t-u_r)^{-1/2}(u_r-s)^{(r-1)/2}\dd u_r\\
&=\frac{\Gamma(1/2)^{r-1}}{\Gamma((r+1)/2)}\,(t-s)^{r/2}\,\mathrm{B}\big(\tfrac12,\tfrac{r+1}{2}\big)\\
&=\frac{\Gamma(1/2)^{r}}{\Gamma(1+r/2)}(t-s)^{r/2},
\end{aligned}
\end{equation}
where the second equality uses item~(iii) of Lemma~\ref{lem:kernel} with $\alpha=-1$, $\beta=r-1$, and
the third uses $\mathrm{B}(a,b)=\Gamma(a)\Gamma(b)/\Gamma(a+b)$. This proves the claim.

\textbf{Step 4: the bound \eqref{eq:bound-r}.} For $r=0$ the claim reads $\abs{\tilde p}\le C_1\varphi_{c,\rho}$,
which is the case $\abs{\alpha}=0$ of \eqref{eq:dpbound} (enlarging $C_1$ if necessary). For $r\ge1$,
expand $\tilde p* H^{*r}$ as the $r$-fold convolution
\begin{multline}
(\tilde p* H^{*r})(s,t,x,y)
=\int_{s<u_1<\cdots<u_r<t}\int_{\R^{dr}}\tilde p(s,u_1,x,z_1)\\
\times\Big[\prod_{k=1}^{r-1}H(u_k,u_{k+1},z_k,z_{k+1})\Big]
H(u_r,t,z_r,y)\,\dd z_1\cdots\dd z_r\,\dd u_1\cdots\dd u_r.
\end{multline}
By \eqref{eq:dpbound} (with $\abs{\alpha}=0$) and \eqref{eq:boundH},
\begin{equation}
\begin{aligned}
\abs{\tilde p(s,u_1,x,z_1)}&\le C\varphi_{c,\sqrt{u_1-s}}(z_1-x),\\
\abs{H(u_k,u_{k+1},z_k,z_{k+1})}&\le C_1(u_{k+1}-u_k)^{-1/2}\varphi_{c,\sqrt{u_{k+1}-u_k}}(z_{k+1}-z_k).
\end{aligned}
\end{equation}
Integrating out $z_1,\dots,z_r$ successively using the exact convolution identity
\begin{equation}
\varphi_{c,\rho}*\varphi_{c,\rho'}=(\pi c)^{d/2}\,\varphi_{c,\sqrt{\rho^2+\rho'^2}},
\end{equation}
which follows by writing $\varphi_{c,\rho}$ as a Gaussian density, exactly as in the proof of
item~(i) of Lemma~\ref{lem:kernel}, the first integration gives
\begin{equation}
\int_{\R^d}\varphi_{c,\sqrt{u_1-s}}(z_1-x)\,\varphi_{c,\sqrt{u_2-u_1}}(z_2-z_1)\dd z_1
=(\pi c)^{d/2}\,\varphi_{c,\sqrt{u_2-s}}(z_2-x),
\end{equation}
because the widths add in quadrature, $\sqrt{(u_1-s)+(u_2-u_1)}=\sqrt{u_2-s}$; iterating over
$z_2,\dots,z_r$ telescopes to $\sqrt{t-s}=\rho$ and multiplies by $(\pi c)^{d/2}$ at each step, so the
spatial factor is exactly $(\pi c)^{dr/2}\varphi_{c,\rho}(y-x)$. Therefore, combining with the time
integral $J_r$ of Step~3,
\begin{equation}
\abs{\tilde p* H^{*r}}(s,t,x,y)
\le C\,C_1^{r}\,(\pi c)^{dr/2}\,J_r(s,t)\,\varphi_{c,\rho}(y-x)
\le C_1^{r+1}\,\rho^{r}\,\Gamma^{-1}\big(1+\tfrac r2\big)\,\varphi_{c,\rho}(y-x),
\end{equation}
where in the last step the $r$-independent constants $C$, $(\pi c)^{d/2}$ and $\Gamma(1/2)=\sqrt\pi$
are absorbed into a once-enlarged $C_1$. This proves \eqref{eq:bound-r}.
\end{proof}

\subsection{Parametrix expansion: continuous case}
\label{sec:param-cont}

\begin{lemma}
\label{lem:param-cont}
The transition density $p(s,t,x,y)$ of the (measure-frozen) diffusion with coefficients
$(t,x)\mapsto b(t,x,\mu_t)$ and $(t,x)\mapsto\Sigma(t,x,\mu_t)$ admits the expansion
\begin{equation}
\label{eq:param-cont}
p(s,t,x,y)=\sum_{r=0}^{\infty}\big(\tilde p* H^{*r}\big)(s,t,x,y),
\end{equation}
which converges absolutely for every $s<t$; the convergence is uniform in $(x,y)\in\R^{2d}$ for
fixed $0\le s<t\le1$, and each term satisfies the bound \eqref{eq:bound-r}.
\end{lemma}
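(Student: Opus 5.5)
The plan is to prove absolute convergence first, directly from Lemma~\ref{lem:kernbound}, and then identify the sum with $p$ through the standard Duhamel (backward Kolmogorov) argument for the measure-frozen diffusion.

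\textbf{Convergence.} By \eqref{eq:bound-r},
\[
\sum_{r\ge0}\abs{\tilde p*H^{*r}(s,t,x,y)}\le \varphi_{c,\rho}(y-x)\sum_{r\ge0}C_1^{r+1}\rho^r\,\Gamma^{-1}\big(1+\tfrac r2\big).
\]
The right-hand sum is finite because $\Gamma(1+r/2)$ grows superexponentially. Since $\varphi_{c,\rho}\le\rho^{-d}$, the Weierstrass M-test gives absolute convergence, uniform in $(x,y)$ for fixed $0\le s<t\le1$. Each term then satisfies \eqref{eq:bound-r} by that lemma. Call the sum $S(s,t,x,y)$.

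\textbf{Identification $S=p$.} The coefficients $(t,x)\mapsto b(t,x,\mu_t)$ and $\Sigma(t,x,\mu_t)$ are bounded and uniformly elliptic. They are Lipschitz in $x$, and by Remark~\ref{rem:autotime} they are $\tfrac12$-Hölder in $t$. By the classical parabolic theory \cite{Friedman1964,Aronson1967}, the diffusion therefore has a transition density $p(s,t,x,y)$ that is $C^{1,2}$ in $(s,x)$ for $s<t$ and satisfies the backward equation $(\partial_s+L^\mu)p=0$. It also has a Gaussian upper bound and $p(s,t,\cdot,y)\to\delta_y$ as $s\uparrow t$.

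For fixed $(s,t,x,y)$, apply the semigroup identity to $u\mapsto\int p(s,u,x,z)\,\tilde p(u,t,z,y)\,dz$ on $[s+\epsilon,t-\epsilon]$ and differentiate in $u$. The forward equation for $p$, which follows by duality, together with $(\partial_u+\tilde L^\mu)\tilde p(u,t,\cdot,y)=0$ (by construction of the frozen Gaussian), gives
\[
\frac{d}{du}\int p(s,u,x,z)\,\tilde p(u,t,z,y)\,dz=-\int p(s,u,x,z)\,H(u,t,z,y)\,dz .
\]
One can avoid needing the forward equation for $p$ by differentiating the Itô expansion of $\tilde p(u,t,X_u,y)$ along the true diffusion started at $(s,x)$. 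Letting $\epsilon\to0$ and using $\tilde p(u,t,\cdot,y)\to\delta_y$ and $p(s,u,x,\cdot)\to\delta_x$ yields the Duhamel identity
\[
p=\tilde p+p*H .
\]
Iterating $N$ times gives
\[
p=\sum_{r=0}^{N}\tilde p*H^{*r}+p*H^{*(N+1)}.
\]
The remainder is bounded, via the Gaussian bound $p\le C\varphi_{c,\rho}$, by the same computation as in Step~4 of Lemma~\ref{lem:kernbound}: $C\,C_1^{N+1}\rho^{N+1}\Gamma^{-1}(1+\tfrac{N+1}2)\varphi_{c,\rho}$, which tends to $0$. Hence $p=S$.

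\textbf{Main obstacle.} The delicate step is the boundary limit $\epsilon\to0$ at $u\uparrow t$. Near the diagonal $H$ has the non-integrable-looking factor $\rho^{-2}\norm{x-y}$. The point is that only $\tilde p\to\delta_y$ weakly and $p(s,u,x,\cdot)$ continuous in $z$ are needed there, while the $u$-integral of $p*H$ converges thanks to the $\rho^{-1}$ bound \eqref{eq:boundH}, which is integrable.

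To avoid heavy PDE regularity of $p$, my first attempt would instead be the probabilistic route sketched in Remark~\ref{rem:exist}:
\begin{itemize}[leftmargin=2em]
\item show that $S$ is a nonnegative kernel satisfying Chapman--Kolmogorov and the martingale problem for $L^\mu$, by checking $(\partial_s+L^\mu)S=0$ termwise using $\tilde p$ derivative bounds \eqref{eq:dpbound};
\item conclude $S=p$ by uniqueness in law for the frozen SDE with Lipschitz coefficients.
\end{itemize}
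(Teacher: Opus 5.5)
Your main argument is the paper's: absolute and uniform convergence from \eqref{eq:bound-r}, the Volterra identity $p=\tilde p+p*H$, iteration, and the vanishing of $p*H^{*N}$ via the Aronson bound. The one variation is in how you get the Volterra identity: you differentiate $u\mapsto\int p(s,u,x,z)\,\tilde p(u,t,z,y)\,\dd z$ (or use It\^o's formula for $\tilde p(u,t,X_u,y)$), whereas the paper subtracts the two backward Kolmogorov equations and quotes Duhamel's formula; your route makes the singular endpoint $u\uparrow t$ explicit, where you actually need joint continuity of $p(s,\cdot,x,\cdot)$ at $(t,y)$, not just continuity in $z$.

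There are two corrections. First, since $\partial_u\tilde p=-\tilde L^\mu\tilde p$, that derivative equals $+\int p(s,u,x,z)\,H(u,t,z,y)\,\dd z$; with your minus sign, integrating over $[s,t]$ would give $p=\tilde p-p*H$. Second, your fallback of verifying $(\partial_s+L^\mu)S=0$ term by term cannot rest on \eqref{eq:dpbound} alone: when $\partial_x^2$ falls on the leading factor $\tilde p(s,u_1,x,z_1)$ it leaves a non-integrable $(u_1-s)^{-1}$ singularity. That singularity is removed only by a cancellation argument using H\"older continuity of the remaining kernels in $z_1$.
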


\begin{proof}
The proof proceeds in five steps; throughout, $(t,y)$ is held fixed and all differential operators
act on the backward variables $(s,x)$.

\textbf{Step 1: the two Kolmogorov equations.} The transition density $p$ solves the \emph{backward}
Kolmogorov equation
\begin{equation}
\label{eq:bk}
\partial_s p+L^\mu p=0,\qquad \lim_{s\uparrow t}p(s,t,\cdot,y)=\delta_y,
\end{equation}
where $L^\mu$ acts in $x$; this is classical for diffusions with bounded, Lipschitz-continuous
coefficients and a uniformly elliptic diffusion (see \cite{Friedman1964}). The frozen density
$\tilde p$ solves, \emph{exactly},
\begin{equation}
\label{eq:bktil}
\partial_s\tilde p+\tilde L^\mu\tilde p=0,\qquad \lim_{s\uparrow t}\tilde p(s,t,\cdot,y)=\delta_y,
\end{equation}
since $\tilde p$ is the transition density of the affine Gaussian process \eqref{eq:mvtil}
(equivalently, by differentiating \eqref{eq:frozen-cont} using $\partial_s\bar m=-b(s,y,\mu_s)$ and
$\partial_s\bar\Sigma=-\Sigma(s,y,\mu_s)$).

\textbf{Step 2: the error equation.} Set $\vartheta(s,t,x,y)=p(s,t,x,y)-\tilde p(s,t,x,y)$. Subtracting
\eqref{eq:bktil} from \eqref{eq:bk},
\begin{equation}
\partial_s \vartheta+L^\mu \vartheta=(\tilde L^\mu-L^\mu)\tilde p=-H(\cdot,t,\cdot,y),\qquad \vartheta(t,t,\cdot,y)=0,
\end{equation}
using \eqref{eq:bktil} and the definition \eqref{eq:Hdef} of $H$; here $-H(\cdot,t,\cdot,y)$
denotes the function $(s,x)\mapsto -H(s,t,x,y)$ with $(t,y)$ fixed.

\textbf{Step 3: Duhamel's formula.} By the variation-of-constants (Duhamel) formula for the backward
parabolic Cauchy problem --- see \cite{BallyTalay1996a,BallyTalay1996b} and
\cite{KonakovMammen2000}, and the classical theory in \cite{Friedman1964} --- the unique solution of
the inhomogeneous problem $\partial_s v+L^\mu v=f$, $v(t,\cdot)=\Psi$ is given by
\begin{equation}
v(s,x)=\int_{\R^d}p(s,t,x,z)\,\Psi(z)\dd z-\int_s^t\int_{\R^d}p(s,u,x,z)\,f(u,z)\dd z\dd u.
\end{equation}
Applying this with $\Psi=0$ and $f=-H$ yields the Volterra identity
\begin{equation}
\label{eq:volterra}
p(s,t,x,y)=\tilde p(s,t,x,y)+(p* H)(s,t,x,y).
\end{equation}

\textbf{Step 4: iteration.} Iterating \eqref{eq:volterra} once,
\begin{equation}
p=\tilde p+p* H=\tilde p+\big(\tilde p+p* H\big)* H
=\tilde p+\tilde p* H+p* H^{*2},
\end{equation}
and by induction on $N\ge1$,
\begin{equation}
\label{eq:partial}
p(s,t,x,y)=\sum_{r=0}^{N-1}\big(\tilde p* H^{*r}\big)(s,t,x,y)
+\big(p* H^{*N}\big)(s,t,x,y).
\end{equation}

\textbf{Step 5: absolute convergence and the vanishing remainder.} By \eqref{eq:bound-r}, for every
$r\ge0$,
\begin{equation}
\abs{\tilde p* H^{*r}(s,t,x,y)}
\le C_1^{r+1}\rho^{r}\,\Gamma^{-1}\big(1+\tfrac r2\big)\,\varphi_{c,\rho}(y-x),
\end{equation}
so the series $\sum_{r\ge0}\tilde p* H^{*r}$ converges absolutely, since $\Gamma(1+r/2)$ grows
superexponentially. It remains to show that the remainder $\mathcal R_N:=p* H^{*N}$ tends to zero as
$N\to\infty$. Using the Aronson Gaussian estimate $p(s,u,x,z)\le C\varphi_{c,\sqrt{u-s}}(z-x)$
(see \cite{Aronson1967} or \cite{Friedman1964}), and iterating the same convolution estimate as in
the proof of \eqref{eq:bound-r}, we obtain the same bound
\begin{equation}
\abs{\mathcal R_N(s,t,x,y)}\le C_1^{N+1}\rho^{N}\,\Gamma^{-1}\big(1+\tfrac N2\big)\,\varphi_{c,\rho}(y-x)
\xrightarrow[N\to\infty]{}\ 0,
\end{equation}
uniformly in $(x,y)\in\R^{2d}$ for fixed $s<t$. Passing to the limit in \eqref{eq:partial}
yields \eqref{eq:param-cont}.
\end{proof}

\subsection{Parametrix expansion: discrete case}
\label{sec:param-disc}

\begin{lemma}
\label{lem:param-disc}
The transition density $p_n$ of the Euler chain \eqref{eq:Euler} admits the expansion
\begin{equation}
\label{eq:param-disc}
p_n(j,k;x,y)=\sum_{r=0}^{k-j}\big(\tilde p_n\circledast H_n^{\circledast r}\big)(j,k;x,y).
\end{equation}
\end{lemma}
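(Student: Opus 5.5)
We prove \eqref{eq:param-disc} by a purely algebraic telescoping (discrete Duhamel) identity, the exact analogue of Steps~2--4 of Lemma~\ref{lem:param-cont}. No analytic limit is needed, because the chain has only $k-j$ steps.

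\textbf{Setup.} Write $P_i$ for the one-step transition operator $P_if(z)=\int r^{(n)}_i(z,w)f(w)\dd w$, and $\tilde P^y_i$ for its frozen counterpart with kernel $\tilde r^{(n),y}_i$. By the Markov property of the chain \eqref{eq:Euler} with the flow frozen at $\mu^{(n)}$,
\begin{equation*}
p_n(j,k;x,y)=\big(P_j\cdots P_{k-1}\delta_y\big)(x),
\end{equation*}
read in the density sense. Here $p_n(j,k;x,\cdot)$ is the density of a composition of Gaussian kernels, hence smooth for $k>j$, with the convention $p_n(k,k;x,\cdot)=\delta_x$. The exact Chapman--Kolmogorov identity of the frozen chain gives
\begin{equation*}
\tilde p_n(i,k;x,y)=\int\tilde r^{(n),y}_i(x,z)\,\tilde p_n(i+1,k;z,y)\dd z
\quad\text{for } i<k,
\end{equation*}
which is the statement $\tilde L_n\tilde p_n=0$ already noted after \eqref{eq:Hn-exact}. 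We set $\tilde p_n(k,k;\cdot,y)=\delta_y$.

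\textbf{Step 1: discrete Volterra identity.} Telescope
\begin{equation*}
p_n(j,k)-\tilde p_n(j,k)=\sum_{i=j}^{k-1}\Big[p_n(j,i)\,P_i\,\tilde p_n(i+1,k)-p_n(j,i)\,\tilde P^y_i\,\tilde p_n(i+1,k)\Big],
\end{equation*}
using the operator notation $p_n(j,i)P_i\tilde p_n(i+1,k)$ for the integral $\int\!\!\int p_n(j,i;x,z)\,r^{(n)}_i(z,w)\,\tilde p_n(i+1,k;w,y)\dd w\dd z$. The identity holds because:
\begin{itemize}
\item $p_n(j,i)P_i=p_n(j,i+1)$;
\item $\tilde P^y_i\tilde p_n(i+1,k)=\tilde p_n(i,k)$;
\item consecutive terms cancel, and the endpoint terms are $p_n(j,k)$ (at $i=k-1$, using $\tilde p_n(k,k)=\delta_y$) and $-\tilde p_n(j,k)$ (at $i=j$, using $p_n(j,j)=\delta_x$).
\end{itemize}
By \eqref{eq:Hn-exact}, the $i$-th summand equals $h\int p_n(j,i;x,z)H_n(i,k;z,y)\dd z$. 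Hence
\begin{equation*}
p_n=\tilde p_n+p_n\circledast H_n.
\end{equation*}
The $i=j$ cell, where $p_n(j,j)=\delta_x$, is exactly the collapsed term $hH_n(j,k;x,y)$ described in the convention on chains.

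\textbf{Step 2: iteration and termination.} Iterating as in \eqref{eq:partial} gives, for every $N\ge1$,
\begin{equation*}
p_n=\sum_{r=0}^{N-1}\tilde p_n\circledast H_n^{\circledast r}+p_n\circledast H_n^{\circledast N}.
\end{equation*}
This uses associativity of $\circledast$, which follows by Fubini from finite sums of absolutely integrable Gaussian-type integrals. An $N$-fold grid convolution at $(j,k)$ sums over strictly increasing chains $j\le i_1<\dots<i_N\le k-1$. Such chains do not exist once $N>k-j$, so the remainder vanishes identically for $N=k-j+1$. This yields \eqref{eq:param-disc} with a finite sum, and no convergence argument is required.

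\textbf{Main obstacle.} The delicate point is justifying the manipulations at the diagonal and with Dirac masses: the endpoint conventions $p_n(j,j)=\delta_x$ and $\tilde p_n(k,k)=\delta_y$, and the claim that $H_n(i,k;\cdot,y)$ is a genuine integrable function. For the last point I would use Lemma~\ref{lem:kernel}(v). It shows that $r^{(n)}_i(x,\cdot)-\tilde r^{(n),y}_i(x,\cdot)$, paired with the smooth Gaussian $\tilde p_n(i+1,k;\cdot,y)$, is bounded by a Gaussian times $\norm{x-y}$, so every integral above is absolutely convergent and Fubini applies. The endpoint case $i=k-1$, where $\tilde p_n(k,k)=\delta_y$, gives $H_n(k-1,k;x,y)=h^{-1}\big[r^{(n)}_{k-1}(x,y)-\tilde r^{(n),y}_{k-1}(x,y)\big]$, which is still a bounded function. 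Smoothness of $p_n(0,n;x_0,\cdot)$ follows because the last factor of every term is a smooth Gaussian in $y$.
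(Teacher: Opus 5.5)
Your argument is correct and is essentially the paper's proof. The paper obtains $p_n=\tilde p_n+p_n\circledast H_n$ by subtracting the two backward Chapman--Kolmogorov relations at the first step and unrolling the resulting one-step recursion in the initial index, which yields exactly your telescoping sum; then, as you do, it iterates and kills the remainder $p_n\circledast H_n^{\circledast(k-j+1)}$ because no strictly increasing chain of that length fits in $\{j,\dots,k-1\}$.

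Two side remarks:
\begin{itemize}
\item Your integrability discussion via Lemma~\ref{lem:kernel}(v) is harmless but unnecessary for fixed $n$, since each $H_n(i,k;\cdot,y)$ is bounded and $p_n(j,i;x,\cdot)$ is a probability density.
\item Your closing smoothness remark is outside this lemma. Smoothness is best read off from the last Euler kernel, $p_n(0,n;x_0,y)=\int p_n(0,n-1;x_0,z)\,r^{(n)}_{n-1}(z,y)\,dz$, rather than from the parametrix terms: their last factor $H_n(i_r,k;\cdot,y)$ depends on $y$ through the frozen coefficients and is only as regular as $b$ and $\Sigma$.
\end{itemize}
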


\begin{proof}
By the backward Chapman--Kolmogorov equation, for $j<k$,
\begin{equation}
\begin{aligned}
p_n(j,k;x,y)&=\int_{\R^d}r^{(n)}_j(x,z)\,p_n(j+1,k;z,y)\,\dd z,\\
\tilde p_n(j,k;x,y)&=\int_{\R^d}\tilde r^{(n),y}_j(x,z)\,\tilde p_n(j+1,k;z,y)\,\dd z.
\end{aligned}
\end{equation}
Subtracting and using \eqref{eq:Hn-exact},
\begin{equation}
p_n(j,k;x,y)-\tilde p_n(j,k;x,y)
=h\,H_n(j,k;x,y)+\int_{\R^d}r^{(n)}_j(x,z)\,\big(p_n-\tilde p_n\big)(j+1,k;z,y)\,\dd z.
\end{equation}
Iterating, and using that the residual term vanishes at $j=k$ (since
$p_n(k,k;\cdot,y)=\tilde p_n(k,k;\cdot,y)$, both equal to $\delta_y$), we obtain
\begin{equation}
p_n(j,k;x,y)=\tilde p_n(j,k;x,y)+\sum_{i=j}^{k-1}h\int_{\R^d}p_n(j,i;x,z)\,H_n(i,k;z,y)\,\dd z,
\end{equation}
that is $p_n=\tilde p_n+p_n\circledast H_n$. Iterating this identity, by induction on $N\ge1$, gives the partial expansion
\begin{equation}
\label{eq:disc-partial}
p_n(j,k;x,y)=\sum_{r=0}^{N-1}\big(\tilde p_n\circledast H_n^{\circledast r}\big)(j,k;x,y)
+\big(p_n\circledast H_n^{\circledast N}\big)(j,k;x,y).
\end{equation}

It remains to explain why the remainder in \eqref{eq:disc-partial} vanishes. By \eqref{eq:convn}, an
$r$-fold discrete convolution at $(j,k)$ is a sum over chains of grid indices
$j\le i_1<i_2<\cdots<i_r\le k-1$; since the set $\{j,j+1,\dots,k-1\}$ contains only $k-j$ elements,
such a chain exists only for $r\le k-j$, and the convolution is identically zero for $r\ge k-j+1$.
Taking $N=k-j+1$ in \eqref{eq:disc-partial}, the remainder
$\big(p_n\circledast H_n^{\circledast (k-j+1)}\big)(j,k;\cdot,\cdot)$ vanishes identically, and
\eqref{eq:disc-partial} reduces to the finite sum \eqref{eq:param-disc}. Thus the tail term
vanishes vacuously --- no index chain of length $k-j+1$ exists between $j$ and $k$ --- rather than by
a limiting argument as in the continuous case (Lemma~\ref{lem:param-cont}, Step~5). Only the Markov
property and the exact identity \eqref{eq:Hn-exact} are used; no property of the measure flow enters.
\end{proof}

\section{Comparison of the Two Expansions}
\label{sec:comparison}

We now compare \eqref{eq:param-cont} and \eqref{eq:param-disc}. The continuous coefficients are
frozen at the continuous flow $\mu$, while the discrete coefficients are frozen at $\mu^{(n)}$; we
isolate this discrepancy in the following lemma, and in Subsection~\ref{sec:approx-kernel} we
compare the two kernels themselves through the auxiliary kernels $H^{(n)}$ and $K_n$. Since the
continuous expansion \eqref{eq:param-cont} is an infinite series whereas the discrete expansion
\eqref{eq:param-disc} is a finite sum whose terms vanish for $r>k-j$, the comparison below is carried out
only on the common range $r\le k-j$; the complementary continuous tail $r>k-j$ is treated separately in
Section~\ref{sec:completion}.

\textbf{Convention on the convolution order.}
All the estimates of this section are stated at a pair of grid indices $0\le j<k\le n$, and we
adopt throughout the convention that the order $r$ of a convolution --- continuous ($*$) or
discrete ($\circledast$) --- never exceeds the number of grid steps between the two endpoints,
\begin{equation}
\label{eq:conv-order}
0\le r\le k-j
\end{equation}
(so that, in particular, $r\le n$; the bound $k-j$ is one less than the number of grid nodes
$t_j,\dots,t_k$). This bound is exactly the maximal admissible order: by \eqref{eq:convn} an
$r$-fold discrete convolution at $(j,k)$ is a sum over strictly increasing chains of grid indices
$j\le i_1<\cdots<i_r\le k-1$, and since the set $\{j,j+1,\dots,k-1\}$ has only $k-j$ elements
such a chain exists precisely when $r\le k-j$, while for $r>k-j$ the discrete convolution
vanishes identically. The continuous convolution \eqref{eq:conv} is of course not subject to this
restriction; hence every comparison between a continuous and a discrete convolution carried out
below would be vacuous on the complementary range $r>k-j$, where only the continuous term
survives. Restricting the order by \eqref{eq:conv-order} therefore keeps the two sides of each
comparison on an equal footing, and the complementary range $r>k-j$, in which the discrete side is
absent, is treated separately in Section~\ref{sec:completion}. Estimates involving a single type
of convolution only are unaffected by the convention.

\textbf{Convention on the Gaussian constant $c$.}
All the Gaussian weights $\varphi_{c,\rho}$ occurring in this section are used with a single
constant $c>0$, common to the bounds inherited from Section~\ref{sec:parametrix} and to those
established below. Whenever two estimates carrying possibly different constants are combined, $c$
is silently enlarged to the larger of the two: since $\varphi_{c,\rho}$ is increasing in $c$ (a
larger $c$ makes the weight decay more slowly), such an enlargement only relaxes an upper bound, so
every estimate proved below remains valid; and since only finitely many constants, all independent
of $n$, are involved, the resulting $c$ is still independent of $n$. In particular this is what
makes the hypothesis of the discrete propagation estimate \eqref{eq:disc-iter} (a single constant
$c$ common to $G$ and to all the $F_\ell$) applicable in Lemma~\ref{lem:replace},
Lemma~\ref{lem:T1} and Lemma~\ref{lem:T2}.

\subsection{Comparison of the frozen densities}
\label{sec:compare-frozen}

\begin{lemma}
\label{lem:compare-frozen}
Under Assumptions~\ref{ass:spatial}--\ref{ass:measLip}, there exist constants $C<\infty$ and
$c>0$, independent of $n$, such that, for all $0\le j<k\le n$ and $x,y\in\R^d$, with
$\rho=\rho_{jk}$,
\begin{equation}
\label{eq:compare-frozen}
\abs{\tilde p_n(j,k;x,y)-\tilde p(t_j,t_k,x,y)}
\le C\,n^{-1/2}\varphi_{c,\rho}(y-x).
\end{equation}
\end{lemma}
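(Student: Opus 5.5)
Both densities are exact Gaussians evaluated at the same point $z=y-x$. Indeed, $\tilde p_n(j,k;x,y)=g(\bar m_n,\bar\Sigma_n;y-x)$ with $\bar m_n=\bar m_n(j,k;y)$ and $\bar\Sigma_n=\bar\Sigma_n(j,k;y)$, and $\tilde p(t_j,t_k,x,y)=g(\bar m,\bar\Sigma;y-x)$ with $\bar m=\bar m(t_j,t_k,y)$ and $\bar\Sigma=\bar\Sigma(t_j,t_k,y)$. The plan is therefore to apply the Gaussian comparison, item~(v) of Lemma~\ref{lem:kernel} with $\alpha=0$. First we check its hypotheses with $\rho=\rho_{jk}$. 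By Assumption~\ref{ass:elliptic}, both $\bar\Sigma$ and $\bar\Sigma_n$, as integrals, respectively left Riemann sums, of $\Sigma$ over an interval of length $t_k-t_j=\rho^2$, lie between $c_0\rho^2I$ and $C_0\rho^2I$. Both means are bounded by $\norm{b}_\infty\rho^2$. Item~(v) then gives
\begin{equation*}
\abs{\tilde p_n-\tilde p}\le C\big[\norm{\bar m_n-\bar m}\rho^{-1}+\norm{\bar\Sigma_n-\bar\Sigma}\rho^{-2}\big]\varphi_{c,\rho}(y-x),
\end{equation*}
so it suffices to show $\norm{\bar m_n-\bar m}\le C\rho^2n^{-1/2}$ and $\norm{\bar\Sigma_n-\bar\Sigma}\le C\rho^2n^{-1/2}$. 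With these, the bracket is at most $C(\rho+1)n^{-1/2}\le Cn^{-1/2}$, since $\rho\le1$.

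For the parameter differences, write
\begin{equation*}
\bar\Sigma_n-\bar\Sigma=\sum_{i=j}^{k-1}\int_{t_i}^{t_{i+1}}\big[\Sigma(t_i,y,\mu^{(n)}_i)-\Sigma(u,y,\mu_u)\big]\dd u,
\end{equation*}
and similarly for the means. Split each integrand into three pieces:
\begin{equation*}
\big[\Sigma(t_i,y,\mu^{(n)}_{t_i})-\Sigma(t_i,y,\mu_{t_i})\big]+\big[\Sigma(t_i,y,\mu_{t_i})-\Sigma(t_i,y,\mu_u)\big]+\big[\Sigma(t_i,y,\mu_u)-\Sigma(u,y,\mu_u)\big].
\end{equation*}
The first piece is a measure-flow error. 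By Assumption~\ref{ass:measLip} with $\abs\alpha=0$ and Lemma~\ref{lem:flowrate}, it is bounded by $\Lambda\,\mathcal W_2(\mu^{(n)}_{t_i},\mu_{t_i})\le Cn^{-1/2}$. The second piece is the Hölder continuity of the flow. Coupling $X_{t_i}$ and $X_u$ and using Lemma~\ref{lem:moments} with $\kappa=1$ gives $\mathcal W_2(\mu_{t_i},\mu_u)\le (\E\norm{X_u-X_{t_i}}^2)^{1/2}\le C(u-t_i)^{1/2}\le Cn^{-1/2}$, so measure Lipschitz continuity gives $Cn^{-1/2}$ again. The third piece is the pure time-discretisation error: by Assumption~\ref{ass:time}, the bound on $\partial_t\Sigma$ and $\partial_t b$ gives $C(u-t_i)\le Cn^{-1}$. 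Integrating over $[t_j,t_k]$ therefore yields $\norm{\bar\Sigma_n-\bar\Sigma}\le C(t_k-t_j)n^{-1/2}=C\rho^2n^{-1/2}$, and the identical argument for $b$ gives the mean bound.

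The one delicate point is item~(v), which is stated for a single $\Theta$ window and a mean bound $\norm m\le C\rho^2$. Both conditions are verified above uniformly in $n$ and in $j<k$. This includes the single-step case $k=j+1$, where $\rho^2=h$ and the covariance comparison still gives $C h n^{-1/2}\rho^{-2}=Cn^{-1/2}$, so no case split is needed. If one prefers not to rely on item~(v) as a black box, the fallback is to interpolate $\Theta_\lambda=\lambda\bar\Sigma_n+(1-\lambda)\bar\Sigma$ and $m_\lambda$ similarly, both of which stay in the admissible window. One then differentiates $g(m_\lambda,\Theta_\lambda;z)$ in $\lambda$, using $\partial_m g=-\nabla_zg$ and $\partial_\Theta g=\tfrac12\nabla_z^2g$, and applies item~(iv) together with the conversion from $\varphi(z-m_\lambda)$ to $\varphi(z)$ done in Step~1 of Lemma~\ref{lem:kernbound}. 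The constant $c$ obtained is independent of $n$, which completes the plan.
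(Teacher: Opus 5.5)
Your proposal is correct and follows the paper's proof: you bound $\norm{\bar m_n-\bar m}$ and $\norm{\bar\Sigma_n-\bar\Sigma}$ by $C\rho^2n^{-1/2}$ (measure-flow error via Assumption~\ref{ass:measLip} and Lemma~\ref{lem:flowrate}, plus a left-endpoint Riemann-sum error), and then apply the Gaussian comparison of Lemma~\ref{lem:kernel}(v) with $\alpha=0$. The only difference is cosmetic: the paper cites the $\tfrac12$-H\"older continuity of $u\mapsto b(u,y,\mu_u)$ from Remark~\ref{rem:autotime}, whereas you split that Riemann-sum error explicitly into the flow increment (via Lemma~\ref{lem:moments}) and the bound on $\partial_t b,\partial_t\Sigma$.
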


\begin{proof}
Both $\tilde p_n$ and $\tilde p$ are Gaussian densities; it suffices to bound the differences between
their means and covariances and then apply the Gaussian comparison of
Lemma~\ref{lem:kernel}(v). Write $\Delta m:=\bar m_n(j,k;y)-\bar m(t_j,t_k,y)$ and
$\Delta\Sigma:=\bar\Sigma_n(j,k;y)-\bar\Sigma(t_j,t_k,y)$, and put $s=t_j$, $t=t_k$.

\emph{(a) Bounding $\Delta m$ and $\Delta\Sigma$.} By Assumption~\ref{ass:measLip},
Remark~\ref{rem:autotime} and Lemma~\ref{lem:flowrate},
\begin{equation}
\begin{aligned}
\norm{\Delta m}
&\le\norm{\sum_{i=j}^{k-1}n^{-1}\Big[b(t_i,y,\mu^{(n)}_i)-b(t_i,y,\mu_{t_i})\Big]}
+\norm{\sum_{i=j}^{k-1}n^{-1}\,b(t_i,y,\mu_{t_i})-\int_s^t b(u,y,\mu_u)\dd u}\\
&\le \Lambda\sum_{i=j}^{k-1}n^{-1}\,\mathcal{W}_2\big(\mu^{(n)}_i,\mu_{t_i}\big)+C\,\rho^{2}n^{-1/2}\\
&\le \Lambda\,(t-s)\sup_{u\in[s,t]}\mathcal{W}_2\big(\mu^{(n)}_u,\mu_u\big)+C\,\rho^{2}n^{-1/2}\\
&\le C\,\rho^{2}n^{-1/2}.
\end{aligned}
\end{equation}
In the first term above we use $\mu^{(n)}_i=\mu^{(n)}_{t_i}$, the value of the interpolated flow at
the grid point $t_i$ (cf.\ Section~\ref{sec:moments}). The second term on the right-hand side is the
left-endpoint Riemann-sum error of the frozen
coefficient $u\mapsto b(u,y,\mu_u)$; since this coefficient is $\tfrac12$-H\"older continuous
(Remark~\ref{rem:autotime}), each subinterval contributes $O(n^{-3/2})$, summing to
$O\big((t-s)n^{-1/2}\big)=O\big(\rho^2 n^{-1/2}\big)$. The last inequality uses $t-s=\rho^2$
and \eqref{eq:flowrate}. Similarly,
\begin{equation}
\norm{\Delta\Sigma}\le C\,\rho^2 n^{-1/2}.
\end{equation}

\emph{(b) The Gaussian comparison.} By Lemma~\ref{lem:kernel}(v) (with $\alpha=0$, $z=y-x$), with
$m=\bar m_n(j,k;y)$, $m'=\bar m(t_j,t_k,y)$, $\Theta=\bar\Sigma_n(j,k;y)$,
$\Theta'=\bar\Sigma(t_j,t_k,y)$ (both $c_0\rho^2 I\preceq\Theta,\Theta'\preceq C_0\rho^2 I$
by Assumption~\ref{ass:elliptic}), and with $\norm{m},\norm{m'}\le C\rho^2$ (which holds since the drift is bounded),
\begin{equation}
\abs{g(m,\Theta;z)-g(m',\Theta';z)}
\le C\Big[\norm{m-m'}\,\rho^{-1}+\norm{\Theta-\Theta'}\,\rho^{-2}\Big]\varphi_{c,\rho}(z).
\end{equation}
Combining with the bounds of part~(a) gives
\begin{equation}
\begin{aligned}
\abs{\tilde p_n(j,k;x,y)-\tilde p(t_j,t_k,x,y)}
&\le C\Big[\rho^2 n^{-1/2}\cdot\rho^{-1}+\rho^2 n^{-1/2}\cdot\rho^{-2}\Big]\varphi_{c,\rho}(y-x)\\
&=C\big[\rho\,n^{-1/2}+n^{-1/2}\big]\varphi_{c,\rho}(y-x).
\end{aligned}
\end{equation}
Since $\rho^2=t-s\le1$, this is bounded by $Cn^{-1/2}\varphi_{c,\rho}(y-x)$, proving
\eqref{eq:compare-frozen}.
\end{proof}

\subsection{The auxiliary kernels \texorpdfstring{$K_n$}{K\_n} and \texorpdfstring{$H^{(n)}$}{H(n)}}
\label{sec:approx-kernel}

The discrete kernel $H_n$ and the continuous kernel $H$ are compared through two
\emph{auxiliary kernels}, which interpolate between them according to the hierarchy
\begin{equation}
\label{eq:hierarchy}
H\ \xleftrightarrow{\ \text{measure flow}\ }\ H^{(n)}
\ \xleftrightarrow{\ \text{time discretisation}\ }\ K_n
\ \xleftrightarrow{\ \text{lattice}\ }\ H_n .
\end{equation}
Each arrow of \eqref{eq:hierarchy} corresponds to a single source of error and is estimated
separately in Lemma~\ref{lem:kernel-flow}, Lemma~\ref{lem:kernel-time} and
Lemma~\ref{lem:kernel-lattice} below; Corollary~\ref{cor:approx-kernel} records the combination that
is used in Section~\ref{sec:difference}. A third auxiliary object, the grid embedding
$H^{\mathrm{g}}_n$ of $H$ (Lemma~\ref{lem:discr-kernel}), is used when comparing the expansion terms
$\tilde p* H^{*r}$ and $\tilde p* K_n^{\circledast r}$. Both bridges are discrete-flow objects: $H^{(n)}$
is the continuous-time kernel obtained by evaluating the \emph{continuous} generators at the
\emph{discrete} flow $\mu^{(n)}$, and $K_n$ is its discrete-time counterpart. Recall the generators
\eqref{eq:Lnu}--\eqref{eq:Ltilden} for a fixed measure flow $\nu$; $K_n$ is built from the
\emph{discrete} frozen density at the shifted index $\check j=\min(j+1,k-1)$, so that $H_n$ and
$K_n$ share the same frozen density $\tilde p_n(j+1,k)$ off the diagonal, the diagonal case $j=k-1$ being treated separately in
Lemma~\ref{lem:kernel-lattice}. Freezing at the index $\check j$ (equal to $j+1$ off the
diagonal and to $j$ on the diagonal), rather than at $j$ as in the classical continuous definition,
aligns the frozen density with the one-step transition of $H_n$: off the diagonal it removes the
$O\big(n^{-1}(\rho^{-1}+\rho^{-2})\big)$ mismatch between $\tilde p_n(j+1,k)$ and $\tilde p_n(j,k)$ from the leading
terms of the comparison, so that these leading terms coincide exactly with those of the expansion
of $H_n$ (see \eqref{eq:HnK} below):
\begin{equation}
\label{eq:Kndef}
\begin{split}
K_n(j,k;x,y)&:=(L^{\mu^{(n)}}-\tilde L^{\mu^{(n)}})\tilde p_n(\check j,k;x,y)\\
&=(b_x-b_y)\cdot\nabla_x\tilde p_n(\check j,k;x,y)\\
&\qquad+\tfrac12(\Sigma_x-\Sigma_y):\nabla^2_x\tilde p_n(\check j,k;x,y)
\end{split}
\end{equation}
where the generators are evaluated at time $s=t_j$, i.e.\ $b_x=b(t_j,x,\mu^{(n)}_j)$,
$b_y=b(t_j,y,\mu^{(n)}_j)$, $\Sigma_x=\Sigma(t_j,x,\mu^{(n)}_j)$,
$\Sigma_y=\Sigma(t_j,y,\mu^{(n)}_j)$, and ``:'' denotes the Frobenius inner product. We also define the
density of the centred Gaussian innovation of the Euler step from $x$,
\begin{equation}
\label{eq:qdef}
q^{(n)}_j(x,\theta)=g(0,\Sigma_x;\theta)
=(2\pi)^{-d/2}(\det\Sigma_x)^{-1/2}
\exp\Big[-\tfrac12\theta^{\top}\Sigma_x^{-1}\theta\Big],
\end{equation}
so that $q^{(n)}_j(x,\cdot)$ is the density of the Gaussian law $\mathcal N(0,\Sigma_x)$ and, in the
change of variables $z=x+n^{-1}\,b_x+n^{-1/2}\theta$,
$r^{(n)}_j(x,z)\dd z=q^{(n)}_j(x,\theta)\dd\theta$. To lighten the
notation we abbreviate $q_x=q^{(n)}_j(x,\cdot)$ and $q_y=q^{(n)}_j(y,\cdot)$; the subscript
indicates the freezing point ($x$ or $y$), while the dependence on the step index $j$ and on $n$ is
suppressed.

The continuous-time bridge $H^{(n)}$ is obtained from the continuous frozen density built
with the \emph{discrete} flow $\mu^{(n)}$, which is well defined through the continuous-time Euler
process \eqref{eq:cont-euler} (cf.\ Lemma~\ref{lem:flowrate}); with
\begin{equation}
\label{eq:mbarn-ndef}
\bar m^{(n)}(s,t,y)=\int_s^t b(u,y,\mu^{(n)}_u)\dd u,\qquad
\bar\Sigma^{(n)}(s,t,y)=\int_s^t\Sigma(u,y,\mu^{(n)}_u)\dd u
\end{equation}
and
\begin{multline}
\label{eq:pn-ndef}
\tilde p^{(n)}(s,t,x,y)=(2\pi)^{-d/2}\det\bar\Sigma^{(n)}(s,t,y)^{-1/2}\\
\times\exp\Big[-\tfrac12\big(y-x-\bar m^{(n)}(s,t,y)\big)^{\top}\bar\Sigma^{(n)}(s,t,y)^{-1}
\big(y-x-\bar m^{(n)}(s,t,y)\big)\Big],
\end{multline}
we set
\begin{equation}
\label{eq:Hn-ndef}
H^{(n)}(s,t,x,y)=(L^{\mu^{(n)}}-\tilde L^{\mu^{(n)}})\tilde p^{(n)}(s,t,x,y).
\end{equation}
Thus $\tilde p^{(n)}$ and $H^{(n)}$ are obtained from $\tilde p$ and $H$ by replacing the
true flow $\mu$ by the discrete flow $\mu^{(n)}$ in all coefficients; in particular
$H^{(n)}$ differs from $H$ only through the measure flow, while $K_n$ differs from
$H^{(n)}$ only through the discretisation of time. Following the convention of
Section~\ref{sec:frozen}, the continuous-time objects built with the discrete flow, such as $\tilde p^{(n)}(s,t,x,y)$ and $H^{(n)}(s,t,x,y)$, carry the superscript $(n)$ and are written with commas.

\begin{lemma}[Lattice comparison]
\label{lem:kernel-lattice}
Under Assumptions~\ref{ass:spatial} and \ref{ass:elliptic}, there are constants
$C<\infty$ and $c>0$, independent of $n$, such that, for all $0\le j<k\le n$, $x,y\in\R^d$, and
$\rho=\rho_{jk}$,
\begin{equation}
\label{eq:lattice-bound}
\abs{H_n(j,k;x,y)-K_n(j,k;x,y)}
\le C\bigl[n^{-1/2}\rho^{-1}+n^{-1}\rho^{-3}\bigr]\varphi_{c,\rho}(y-x).
\end{equation}
\end{lemma}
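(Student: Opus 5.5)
The plan is to Taylor-expand the exact one-step kernel \eqref{eq:Hn-exact} around $x$. We work off the diagonal first, where $\check j=j+1$ and $H_n$, $K_n$ share the frozen density $f(z):=\tilde p_n(j+1,k;z,y)$. The diagonal case $k=j+1$ is handled separately at the end.

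\textbf{Off the diagonal.} With the change of variables $z=x+n^{-1}b_x+n^{-1/2}\theta$ (respectively $b_y$), we write
\begin{equation*}
H_n(j,k;x,y)=n\int_{\R^d}\Big[f\big(x+n^{-1}b_x+n^{-1/2}\theta\big)q_x(\theta)-f\big(x+n^{-1}b_y+n^{-1/2}\theta\big)q_y(\theta)\Big]\dd\theta .
\end{equation*}
We Taylor-expand $f$ around $x$ to second order, with a third-order integral remainder. The zeroth-order terms cancel because $q_x$ and $q_y$ are probability densities. The first-order terms in $\theta$ vanish since both laws are centred. The terms $n^{-1}(b_x-b_y)\cdot\nabla f$ and $\tfrac12 n^{-1}(\Sigma_x-\Sigma_y):\nabla^2 f$ reproduce exactly $n^{-1}K_n$. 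After multiplication by $n$ this leaves $K_n$, which is precisely why $K_n$ was frozen at $\check j$.

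What remains are three kinds of terms.
\begin{itemize}
\item Cross terms of order $n^{-3/2}(b\,\theta)\nabla^2 f$. These vanish by centering, or survive only as the pure drift square $n^{-2}(b_x b_x^\top-b_yb_y^\top):\nabla^2 f$.
\item Third-order terms.
\item The remainder.
\end{itemize}
For the derivative bounds we use \eqref{eq:dpbound}, transferred to $\tilde p_n$ via Lemma~\ref{lem:kernel}(iv). Note $\rho_{j+1,k}\ge\rho/\sqrt2$ when $k\ge j+2$, so $\abs{\partial^\alpha f}\le C\rho^{-\abs{\alpha}}\varphi_{c,\rho}$.

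\textbf{Main obstacle: the third-order terms.} Naively, the difference of the $\theta^{\otimes3}$ terms carries $n\cdot n^{-3/2}\rho^{-3}$. This is too weak for the target $n^{-1/2}\rho^{-1}+n^{-1}\rho^{-3}$, which requires using both the centredness and the $x$-dependence. First, the odd Gaussian moments $\int\theta^{\otimes 3}q=0$ kill the cubic term evaluated at $x$. Second, we write the remainder as
\[
\int_0^1(1-\tau)^2\,\partial^3 f\big(x+\tau(\cdot)\big)\,[\cdot]^3\,\dd\tau
\]
and subtract its value at $\tau=0$. This produces a fourth derivative times $\abs{\theta}^4n^{-2}$, contributing $n^{-1}\rho^{-4}$.

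That bound is still too large in the regime $n^{-1}\rho^{-4}\gg n^{-1}\rho^{-3}$. So instead we exploit the difference $q_x-q_y$:
\begin{itemize}
\item $\norm{\Sigma_x-\Sigma_y}\le C\norm{x-y}$, and via Lemma~\ref{lem:kernel}(ii) this gives $\norm{x-y}\rho^{-1}\varphi\lesssim\varphi$, gaining a factor $\rho$.
\item Both expansions use the same $f$, so the surviving terms are of the form $n\,n^{-2}\norm{x-y}\rho^{-4}\lesssim n^{-1}\rho^{-3}$ and $n\,n^{-3/2}\norm{x-y}\rho^{-3}\cdot$(odd-moment correction), which we bound as $n^{-1/2}\rho^{-2}\cdot$.
\item The remaining $n^{-1/2}\rho^{-2}\norm{x-y}$-type contributions reduce to $n^{-1/2}\rho^{-1}$ after one application of Lemma~\ref{lem:kernel}(ii).
\end{itemize}
The pure drift square is $O(n^{-1}\norm{x-y}\rho^{-2})\le Cn^{-1}\rho^{-1}$, which is absorbed. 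Throughout, the shifted evaluation points $x+\tau(\cdot)$ are handled by the Gaussian decay of $q$: $\varphi_{c,\rho}(y-x-\tau n^{-1/2}\theta)$ integrated against $q(\theta)$ is bounded by $C\varphi_{c',\rho}(y-x)$, because $n^{-1/2}\le\rho$. This is essentially Lemma~\ref{lem:kernel}(i).

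I expect this bookkeeping, namely keeping exactly one factor of $\norm{x-y}$ attached to every term of order $\rho^{-3}$ or worse, to be the delicate step. The fallback, if a term resists, is to estimate it by Lemma~\ref{lem:kernel}(v) applied directly to $r_j-\tilde r_j$, treated as a Gaussian comparison with $\norm{m-m'}\le Cn^{-1}\norm{x-y}$ and $\norm{\Theta-\Theta'}\le Cn^{-1}\norm{x-y}$.

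\textbf{Diagonal case $k=j+1$.} Here $\rho=n^{-1/2}$ and $\check j=j$. We bound $H_n$ and $K_n$ separately.
\begin{itemize}
\item For $K_n$, \eqref{eq:dpbound} and Lemma~\ref{lem:kernel}(ii) give $\abs{K_n}\le C\rho^{-1}\varphi\le Cn^{-1}\rho^{-3}\varphi$.
\item For $H_n$, the one-step density $\tilde p_n(j+1,j+1)$ is $\delta_y$, so $H_n=n[r_j(x,y)-\tilde r_j(x,y)]$. Lemma~\ref{lem:kernel}(v) with width $n^{-1/2}$ gives $\abs{H_n}\le Cn\cdot n^{-1}\norm{x-y}\rho^{-2}\varphi\le C\rho^{-1}\varphi$, which is again of the claimed size.
\end{itemize}
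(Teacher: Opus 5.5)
Your proposal is correct. Off the diagonal it follows the paper's proof:
\begin{itemize}
\item the same exact $\theta$-representation of $H_n$;
\item the same Taylor expansion, in which freezing at $\check j=j+1$ makes the leading terms coincide exactly with $K_n$;
\item the same mechanism for the cubic remainder. Odd Gaussian moments kill it at the base point $x$. One further derivative, carrying one more factor $n^{-1/2}\theta$, accounts for the $\theta$-dependent shift. The $x$-versus-$y$ difference then supplies the factor $\norm{x-y}$ that turns $n^{-1}\rho^{-4}$ into $n^{-1}\rho^{-3}$.
\end{itemize}
The only change there is cosmetic: you expand one order further (subtracting the value at $\tau=0$) where the paper symmetrises $\theta\mapsto-\theta$.

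When writing it up, make sure the odd cubic term is booked as $n^{-1}\norm{x-y}\rho^{-4}$. The intermediate quantity $n^{-1/2}\norm{x-y}\rho^{-3}\lesssim n^{-1/2}\rho^{-2}$ in your bullet would not be admissible on its own. For $k-j\approx n^{1/2}$ it exceeds $n^{-1/2}\rho^{-1}+n^{-1}\rho^{-3}$ by a factor $n^{1/4}$.

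On the diagonal $j=k-1$ you take a genuinely shorter route. There $\rho=n^{-1/2}$, so $n^{-1}\rho^{-3}=\rho^{-1}$, and the stated bound only asks for $C\rho^{-1}\varphi_{c,\rho}(y-x)$. It therefore suffices to bound the two kernels separately:
\begin{itemize}
\item $K_n$ via Lemma~\ref{lem:kernel}(iv) and (ii);
\item $H_n=n\bigl[r^{(n)}_{k-1}-\tilde r^{(n),y}_{k-1}\bigr](x,y)$ via Lemma~\ref{lem:kernel}(v), whose hypotheses hold since the means are $O(\rho^2)$ and the covariances are $\rho^2\Sigma_x$ and $\rho^2\Sigma_y$.
\end{itemize}

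The paper instead expands $q_x\bigl(u+n^{-1/2}(b_y-b_x)\bigr)-q_y(u)$ and shows that the diffusion and drift terms of $H_n$ cancel exactly against those of $K_n$. This gives $\abs{H_n-K_n}\le C\varphi_{c,\rho}(y-x)$, a factor $n^{1/2}$ better than required. That extra precision is not needed for the lemma or its later uses. Lemma~\ref{lem:disc-kernel-bounds}(i) and Lemma~\ref{lem:replace} use only the two-term bound. The hard-class estimate of Lemma~\ref{lem:hard-class} already absorbs $n^{-1}\rho^{-3}$ on every gap, including the unit gap. Your diagonal argument is thus a legitimate simplification of that part of the proof.
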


\begin{proof}
Both kernels are built with the same discrete flow $\mu^{(n)}$, so no measure-flow discrepancy
arises, and the estimate is a purely local one-step computation. We treat the two ranges
$j<k-1$ and $j=k-1$ separately. Throughout, $C$ changes from line to line and is independent of
$n,j,k,x,y$; we abbreviate $\varphi:=\varphi_{c,\rho}(y-x)$ with $c$ possibly enlarged from line
to line, and recall $\rho=\rho_{jk}$.

\textbf{Step 1: the range $j<k-1$.}
Here the frozen index is $\check j=j+1$. Write
\begin{equation}
\chi(\cdot):=\tilde p_n(j+1,k;\cdot,y),\qquad
w_x:=n^{-1}b_x+n^{-1/2}\theta,\qquad w_y:=n^{-1}b_y+n^{-1/2}\theta .
\end{equation}
Treating each of the two terms of \eqref{eq:Hn-exact} by its own change of variables ---
$z=x+n^{-1}b_x+n^{-1/2}\theta$ for the term carrying $r^{(n)}_j$ and
$z=x+n^{-1}b_y+n^{-1/2}\theta$ for the term carrying $\tilde r^{(n),y}_j$ --- the Jacobian
$n^{-d/2}$ of each substitution cancels against the factor $n^{d/2}$ carried by the corresponding
transition density, and we obtain the exact representation
\begin{equation}
\label{eq:exact}
H_n(j,k;x,y)=n\int_{\R^d}\Big[q_x(\theta)\,\chi(x+w_x)-q_y(\theta)\,\chi(x+w_y)\Big]\dd\theta .
\end{equation}
Taylor expanding $\chi$ about $x$ in the increment $w=n^{-1}b+n^{-1/2}\theta$ gives the complete
expansion
\begin{equation}
\label{eq:taylor2}
\begin{aligned}
\chi(x+w)={}&\chi(x)+n^{-1}\,b\cdot\nabla \chi(x)+n^{-1/2}\,\theta\cdot\nabla \chi(x)
+\tfrac{n^{-2}}{2}(b\otimes b):\nabla^{2} \chi(x)\\
&+\tfrac{n^{-3/2}}{2}(b\otimes\theta+\theta\otimes b):\nabla^{2} \chi(x)
+\tfrac{n^{-1}}{2}(\theta\otimes\theta):\nabla^{2} \chi(x)
+R_{3}(x,w).
\end{aligned}
\end{equation}
Inserting \eqref{eq:taylor2} with $b=b_x$ and with $b=b_y$ into \eqref{eq:exact}, and using the
Gaussian moments
\begin{equation}
\int q_x=\int q_y=1,\qquad \int q_x\,\theta=\int q_y\,\theta=0,\qquad
\int q_x(\theta\otimes\theta)=\Sigma_x,\qquad \int q_y(\theta\otimes\theta)=\Sigma_y,
\end{equation}
together with the vanishing of the odd third moments
$\int q_x\theta^{\otimes3}=\int q_y\theta^{\otimes3}=0$, the terms $\chi(x)$ and
$n^{-1/2}\theta\cdot\nabla \chi(x)$ cancel identically, the frozen drift contributes
$(b_x-b_y)\cdot\nabla \chi(x)$, and the second-order terms contribute
$\tfrac12(\Sigma_x-\Sigma_y):\nabla^2 \chi(x)$ (from the $n^{-1}\theta^{\otimes2}$ part) and
$\tfrac{n^{-1}}2(b_x\otimes b_x-b_y\otimes b_y):\nabla^2 \chi(x)$ (from the $n^{-2}b^{\otimes2}$ part,
after the overall factor $n$); the cross term is odd in $\theta$ and integrates to $0$. Denoting
by
\begin{equation}
\label{eq:R3-def}
\mathcal R_3:=n\int_{\R^d}\big[q_x(\theta)\,R_3(x,w_x)-q_y(\theta)\,R_3(x,w_y)\big]\dd\theta
\end{equation}
the integrated Taylor remainder, this yields
\begin{equation}
\label{eq:Hn-expand}
H_n(j,k;x,y)=(b_x-b_y)\cdot\nabla \chi(x)+\tfrac12(\Sigma_x-\Sigma_y):\nabla^2 \chi(x)
+\tfrac{n^{-1}}2(b_x\otimes b_x-b_y\otimes b_y):\nabla^2 \chi(x)+\mathcal R_3 .
\end{equation}
Since $\check j=j+1$ and $\tilde p_n(j+1,k;\cdot,y)=\chi$, the two leading terms of
\eqref{eq:Hn-expand} are exactly the auxiliary kernel $K_n$ of \eqref{eq:Kndef}, so that
\eqref{eq:Hn-expand} reads
\begin{equation}
\label{eq:HnK}
H_n(j,k;x,y)-K_n(j,k;x,y)
=\tfrac{n^{-1}}2(b_x\otimes b_x-b_y\otimes b_y):\nabla^2 \chi(x)+\mathcal R_3 .
\end{equation}
We bound the two terms on the right-hand side of \eqref{eq:HnK} in turn.

\emph{The prefactor term.} The frozen density $\chi=\tilde p_n(j+1,k;\cdot,y)$ is Gaussian; writing
$\bar m_n:=\bar m_n(j+1,k;y)$ and $\bar\Sigma_n:=\bar\Sigma_n(j+1,k;y)$ for its mean and covariance,
we have $\bar\Sigma_n\succeq c_0\rho_{j+1,k}^2I$, and $j<k-1$ gives
$\rho_{j+1,k}\le\rho\le2\rho_{j+1,k}$. By Lemma~\ref{lem:kernel}(iv), after absorbing the mean
shift $\bar m_n=O(\rho^2)$ into $c$ (as in the proof of Lemma~\ref{lem:kernbound}),
\begin{equation}
\abs{\partial^\alpha_x \chi(x)}\le C_\alpha\,\rho^{-\abs{\alpha}}\,\varphi_{c,\rho}(y-x),
\qquad \abs{\alpha}\le2,
\end{equation}
so that, using $\norm{b_x\otimes b_x-b_y\otimes b_y}\le C\norm{x-y}$ (Assumption~\ref{ass:spatial})
and the weight bound $\norm{x-y}\varphi_{c,\rho}\le C\rho\,\varphi_{c+1,\rho}$
(Lemma~\ref{lem:kernel}(ii)),
\begin{equation}
\abs{\tfrac{n^{-1}}2(b_x\otimes b_x-b_y\otimes b_y):\nabla^2 \chi(x)}
\le Cn^{-1}\rho^{-1}\varphi\le Cn^{-1/2}\rho^{-1}\varphi,
\end{equation}
the last step because $n\ge1$, so that $n^{-1}\le n^{-1/2}$.

\emph{The remainder $\mathcal R_3$.} By Taylor's formula with integral remainder, the third-order
term in \eqref{eq:taylor2} is, with $w=n^{-1}b+n^{-1/2}\theta$,
\begin{equation}
\label{eq:R3-integral}
R_3(x,w)=\frac12\int_0^1(1-\tau)^2\,(w\otimes w\otimes w):\nabla^3 \chi(x+\tau w)\,\dd\tau,
\end{equation}
where the triple contraction is
$(u\otimes v\otimes w):\nabla^3 \chi=\sum_{i,j,l}u_iv_jw_l\,\partial^3_{x_ix_jx_l}\chi$. Substituting
$w=n^{-1}b+n^{-1/2}\theta$ and using the symmetry of $\nabla^3 \chi$,
\begin{multline}
\label{eq:cubic}
(w\otimes w\otimes w):\nabla^3 \chi
=n^{-3}(b\otimes b\otimes b):\nabla^3 \chi
+3n^{-5/2}(b\otimes b\otimes\theta):\nabla^3 \chi\\
+3n^{-2}(b\otimes\theta\otimes\theta):\nabla^3 \chi
+n^{-3/2}(\theta\otimes\theta\otimes\theta):\nabla^3 \chi .
\end{multline}
The second and the last terms on the right-hand side of \eqref{eq:cubic} are odd in $\theta$, while
$q_x,q_y$ are centred Gaussians; were $\nabla^3 \chi$ evaluated at the base point $x$, they would
integrate to $0$ against $q_x$ and $q_y$, and only the first and the third terms would survive.
Here, however, $\nabla^3 \chi$ is evaluated at the shifted point $x+\tau w$, which depends on
$\theta$, so the cancellation is only partial: these two terms give a non-vanishing remainder
$\mathcal R_3^{\mathrm{odd}}$ of order $n^{-1}\rho^{-3}\varphi_{c,\rho}(y-x)$, which is quantified
after \eqref{eq:drift-bound}. Inserting the first and the third terms into \eqref{eq:R3-integral} and
\eqref{eq:R3-def}, the remainder splits as
\begin{equation}
\label{eq:R3-split}
\mathcal R_3=\mathcal R_3^{\mathrm{cross}}+\mathcal R_3^{\mathrm{drift}}+\mathcal R_3^{\mathrm{odd}},
\end{equation}
where
\begin{multline}
\label{eq:cross-int}
\mathcal R_3^{\mathrm{cross}}
=\frac{3n^{-1}}2\int_0^1(1-\tau)^2\int_{\R^d}
\Bigl[q_x(\theta)\,(b_x\otimes\theta\otimes\theta):\nabla^3 \chi(x+\tau w_x)\\
-q_y(\theta)\,(b_y\otimes\theta\otimes\theta):\nabla^3 \chi(x+\tau w_y)\Bigr]\dd\theta\,\dd\tau
\end{multline}
collects the $3n^{-2}(b\otimes\theta\otimes\theta)$ contribution, and
\begin{multline}
\label{eq:drift-int}
\mathcal R_3^{\mathrm{drift}}
=\frac{n^{-2}}2\int_0^1(1-\tau)^2\int_{\R^d}
\Bigl[q_x(\theta)\,(b_x\otimes b_x\otimes b_x):\nabla^3 \chi(x+\tau w_x)\\
-q_y(\theta)\,(b_y\otimes b_y\otimes b_y):\nabla^3 \chi(x+\tau w_y)\Bigr]\dd\theta\,\dd\tau
\end{multline}
the $n^{-3}(b\otimes b\otimes b)$ contribution. The numerical factors $\frac{3n^{-1}}2$ and
$\frac{n^{-2}}2$ are the products of the factor $n$ in \eqref{eq:R3-def}, the factor $\frac12$ in
\eqref{eq:R3-integral} and the coefficients $3n^{-2}$ and $n^{-3}$ in \eqref{eq:cubic}.

To bound \eqref{eq:cross-int} and \eqref{eq:drift-int} we first estimate the derivatives at the
shifted point. Since $\chi=g(\bar m_n,\bar\Sigma_n;y-\cdot)$ is Gaussian with $\bar m_n=O(\rho^2)$ and
$\bar\Sigma_n\succeq c_0\rho_{j+1,k}^2I$, Lemma~\ref{lem:kernel}(iv) gives, for $\abs\alpha=3$,
\begin{equation}
\label{eq:shift-deriv}
\abs{\partial^\alpha \chi(x+\tau w)}\le C\,\rho^{-3}\,\varphi_{c,\rho}(y-x-\tau w-\bar m_n).
\end{equation}
The shift $\tau w$ and the mean $\bar m_n$ are now removed from the weight: with
$\norm{w}^2\le2n^{-2}\norm{b}^2+2n^{-1}\norm{\theta}^2$, $\norm{\bar m_n}\le C\rho^2$, $n^{-1}\le\rho^2$
and Young's inequality $\norm{a-b-c}^2\ge\tfrac12\norm{a}^2-4(\norm{b}^2+\norm{c}^2)$, enlarging
$c$ if necessary,
\begin{equation}
\label{eq:shift-absorb}
\varphi_{c,\rho}(y-x-\tau w-\bar m_n)
\le C\,\varphi_{2c,\rho}(y-x)\,\exp\Bigl(\tfrac{8}{c}\norm{\theta}^{2}\Bigr).
\end{equation}
Since $q_x,q_y\le Ce^{-c_1\norm{\theta}^2}$ for some $c_1>0$ (uniform ellipticity of
$\Sigma_x,\Sigma_y$), enlarging $c$ further so that $8/c<c_1$ turns \eqref{eq:shift-absorb} into
the integrable bound
\begin{equation}
\label{eq:shift-int}
\int_{\R^d}q_x(\theta)\,\norm{\theta}^{m}\,\varphi_{c,\rho}(y-x-\tau w_x-\bar m_n)\,\dd\theta
\le C_m\,\varphi_{c',\rho}(y-x),\qquad m\ge0,
\end{equation}
and likewise with $q_y,w_y$ in place of $q_x,w_x$.

We can now bound \eqref{eq:cross-int}. Its two integrands differ only through the freezing point
($x$ versus $y$): the coefficient $b_x$ versus $b_y$, the weight $q_x$ versus $q_y$ (i.e.\
$\Sigma_x$ versus $\Sigma_y$) and the shift $w_x-w_y=n^{-1}(b_x-b_y)$ are each $O(\norm{x-y})$ by
Assumption~\ref{ass:spatial}, uniformly in $\theta$ up to the integrable factor
$(1+\norm{\theta}^2)e^{-c\norm{\theta}^2}$. Hence, by \eqref{eq:shift-deriv}--\eqref{eq:shift-int},
the bracket in \eqref{eq:cross-int} is $O\big(\norm{x-y}\rho^{-3}\varphi_{c',\rho}(y-x)\big)$, and
therefore
\begin{equation}
\label{eq:cross-bound}
\abs{\mathcal R_3^{\mathrm{cross}}}
\le C\,n^{-1}\norm{x-y}\rho^{-3}\varphi_{c',\rho}(y-x)
\le C\,n^{-1}\rho^{-2}\varphi
\le C\,n^{-1/2}\rho^{-1}\varphi,
\end{equation}
where we used $\norm{x-y}\varphi_{c',\rho}\le C\rho\,\varphi_{c'',\rho}$ and $\rho\ge n^{-1/2}$.
Similarly, since $b_x\otimes b_x\otimes b_x$ is bounded and Lipschitz in $x$, the two integrands of
\eqref{eq:drift-int} differ by $O(\norm{x-y})$, so that
\begin{equation}
\label{eq:drift-bound}
\abs{\mathcal R_3^{\mathrm{drift}}}
\le C\,n^{-2}\norm{x-y}\rho^{-3}\varphi
\le C\,n^{-2}\rho^{-2}\varphi
\le C\,n^{-1/2}\rho^{-1}\varphi .
\end{equation}

\emph{The odd term $\mathcal R_3^{\mathrm{odd}}$.} It remains to control the contribution of the
second and the last terms of \eqref{eq:cubic}. Because $\nabla^3 \chi$ is evaluated at the shifted
point $x+\tau w$, which depends on $\theta$, they do not vanish; we estimate them by symmetrising
each $\theta$-integral. Write $w_i(\vartheta)=n^{-1}b_i+n^{-1/2}\vartheta$ ($i=x,y$), so that
$w_i(-\vartheta)=w_i(\vartheta)-2n^{-1/2}\vartheta$ and
$\norm{w_i(\vartheta)-w_i(-\vartheta)}=2n^{-1/2}\norm{\vartheta}$. The two surviving tensors of
\eqref{eq:cubic} are $\theta\otimes\theta\otimes\theta$ and $b_i\otimes b_i\otimes\theta$; both are
odd as functions of $\theta$, and since $q_x,q_y$ are even, for either of them --- written
$T_i(\theta)$ --- we have
\begin{equation}
\label{eq:odd-sym}
\int_{\R^d}q_i(\theta)\,T_i(\theta)\!:\!\nabla^3 \chi(x+\tau w_i(\theta))\,\dd\theta
=\tfrac12\int_{\R^d}q_i(\theta)\,T_i(\theta)\!:\!\bigl[\nabla^3 \chi(x+\tau w_i(\theta))-\nabla^3 \chi(x+\tau w_i(-\theta))\bigr]\dd\theta
\end{equation}
for $i=x,y$. By Lemma~\ref{lem:kernel}(iv) the bracket is
$O\big(\tau n^{-1/2}\norm{\theta}\rho^{-4}\varphi_{c,\rho}(y-x-\tau w_i(\theta)-\bar m_n)\big)$, and
$\int_{\R^d}q_i(\theta)\norm{\theta}^{\,j}\varphi_{c,\rho}(y-x-\tau w_i(\theta)-\bar m_n)\dd\theta\le C\varphi_{c',\rho}(y-x)$
for $j\le4$, by \eqref{eq:shift-absorb} together with $q_i(\theta)\le Ce^{-c_1\norm\theta^2}$ and
$8/c<c_1$. Hence, for $T_i(\theta)=\theta\otimes\theta\otimes\theta$ and
$T_i(\theta)=b_i\otimes b_i\otimes\theta$,
\begin{equation}
\label{eq:odd-symbound}
\Bigl|\int_{\R^d}q_i(\theta)\,T_i(\theta)\!:\!\nabla^3 \chi(x+\tau w_i(\theta))\,\dd\theta\Bigr|
\le C\,\tau\,n^{-1/2}\rho^{-4}\varphi_{c',\rho}(y-x),\qquad i=x,y .
\end{equation}
The two surviving terms of \eqref{eq:cubic} thus contribute
\begin{multline}
\label{eq:R3odd-int}
\mathcal R_3^{\mathrm{odd}}
=\tfrac{n^{-1/2}}2\int_0^1(1-\tau)^2\int_{\R^d}
\Bigl[q_x(\theta)\,(\theta\otimes\theta\otimes\theta):\nabla^3 \chi(x+\tau w_x)
-q_y(\theta)\,(\theta\otimes\theta\otimes\theta):\nabla^3 \chi(x+\tau w_y)\Bigr]\dd\theta\,\dd\tau\\
+\tfrac{3n^{-3/2}}2\int_0^1(1-\tau)^2\int_{\R^d}
\Bigl[q_x(\theta)\,(b_x\otimes b_x\otimes\theta):\nabla^3 \chi(x+\tau w_x)
-q_y(\theta)\,(b_y\otimes b_y\otimes\theta):\nabla^3 \chi(x+\tau w_y)\Bigr]\dd\theta\,\dd\tau .
\end{multline}
Each bracket is $O\big(\norm{x-y}\tau n^{-1/2}\rho^{-4}\varphi\big)$: the two symmetrised
integrals in $i=x$ and $i=y$ are each $O(\tau n^{-1/2}\rho^{-4}\varphi)$ by \eqref{eq:odd-symbound},
and they differ by $O(\norm{x-y})$, because the densities $q_x-q_y=O(\norm{x-y})(1+\norm{\theta}^2)e^{-c\norm{\theta}^2}$
and the bracket $\nabla^3 \chi(x+\tau w_i(\theta))-\nabla^3 \chi(x+\tau w_i(-\theta))$ is, up to an
$O(\norm{x-y})$ error, the same function of $\theta$ for $i=x$ and $i=y$ (the shifts $w_x,w_y$ differ
by $n^{-1}\norm{b_x-b_y}=O(n^{-1}\norm{x-y})$). Using
$\int_0^1\tau(1-\tau)^2\dd\tau=\tfrac1{12}$, $\norm{x-y}\varphi_{c',\rho}\le C\rho\,\varphi$ and
$\rho\ge n^{-1/2}$,
\begin{equation}
\label{eq:R3odd-bound}
\abs{\mathcal R_3^{\mathrm{odd}}}
\le C\bigl[n^{-1}+n^{-2}\bigr]\norm{x-y}\rho^{-4}\varphi
\le C\,n^{-1}\rho^{-3}\varphi .
\end{equation}

Feeding \eqref{eq:cross-bound}, \eqref{eq:drift-bound} and \eqref{eq:R3odd-bound} into \eqref{eq:HnK},
together with the bound of the prefactor term, completes the proof in the range $j<k-1$:
\begin{equation}
\abs{H_n(j,k;x,y)-K_n(j,k;x,y)}
\le C\bigl[n^{-1/2}\rho^{-1}+n^{-1}\rho^{-3}\bigr]\varphi_{c,\rho}(y-x),
\end{equation}
which is \eqref{eq:lattice-bound}.

\textbf{Step 2: the diagonal $j=k-1$.}
Here $\rho^2=n^{-1}$ and $\chi=\tilde p_n(k,k;\cdot,y)=\delta_y$, so \eqref{eq:Hn-exact} collapses
to
\begin{equation}
\label{eq:Hdiag}
H_n(k-1,k;x,y)=n\big[r^{(n)}_{k-1}(x,y)-\tilde r^{(n),y}_{k-1}(x,y)\big].
\end{equation}
Setting $\theta=n^{1/2}(y-x)$ and $u=\theta-n^{-1/2}b_y$, the two one-step densities are
\begin{equation}
r^{(n)}_{k-1}(x,y)=n^{d/2}q_x\big(u+n^{-1/2}(b_y-b_x)\big),
\qquad
\tilde r^{(n),y}_{k-1}(x,y)=n^{d/2}q_y(u),
\end{equation}
since $r^{(n)}_{k-1}(x,\cdot)$ is centred at $x+n^{-1}b_x$ with covariance $n^{-1}\Sigma_x$, and
$\tilde r^{(n),y}_{k-1}$ is the same object with $b_y,\Sigma_y$. Substituting these into
\eqref{eq:Hdiag},
\begin{equation}
\label{eq:Hdiag2}
H_n=n^{1+d/2}\Big[q_x\big(u+n^{-1/2}(b_y-b_x)\big)-q_y(u)\Big].
\end{equation}
We expand the bracket in \eqref{eq:Hdiag2} step by step. First, Taylor expanding $q_x$ about $u$
in the shift $n^{-1/2}(b_y-b_x)$,
\begin{equation}
\label{eq:qshift}
q_x\big(u+n^{-1/2}(b_y-b_x)\big)
=q_x(u)+n^{-1/2}(b_y-b_x)\cdot\nabla q_x(u)
+O\big(n^{-1}\norm{x-y}^2(1+\norm{u}^2)e^{-c\norm{u}^2}\big).
\end{equation}
Second, expanding $q_x-q_y$ in $\Sigma_x-\Sigma_y$ and using $\partial_\Sigma q=\tfrac12\nabla^2q$,
\begin{equation}
\label{eq:qdiff}
q_x(u)-q_y(u)=\tfrac12(\Sigma_x-\Sigma_y):\nabla^2 q_y(u)
+O\big(\norm{x-y}^2(1+\norm{u}^4)e^{-c\norm{u}^2}\big).
\end{equation}
Third, combining \eqref{eq:qshift} and \eqref{eq:qdiff} and multiplying by $n^{1+d/2}$, we split
off from the drift term the part that is common with the auxiliary kernel,
\begin{equation}
(b_y-b_x)\cdot\nabla q_x(u)
=(b_y-b_x)\cdot\nabla q_y(u)+(b_y-b_x)\cdot\big(\nabla q_x(u)-\nabla q_y(u)\big),
\end{equation}
and obtain
\begin{equation}
\label{eq:Hdiag3}
H_n=\tfrac{n^{(d+2)/2}}2(\Sigma_x-\Sigma_y):\nabla^2 q_y(u)
+n^{(d+1)/2}(b_y-b_x)\cdot\nabla q_y(u)+\mathcal S_n,
\end{equation}
where the higher-order leftover is
\begin{equation}
\label{eq:Hdiag-rem}
\mathcal S_n=n^{(d+1)/2}(b_y-b_x)\cdot\big(\nabla q_x(u)-\nabla q_y(u)\big)
+O\big(n^{d/2}(1+\norm{\theta}^2)(1+\norm{u}^4)e^{-c\norm{u}^2}\big).
\end{equation}

It remains to compare \eqref{eq:Hdiag3} with $K_n$. On the diagonal $\check j=k-1$, so the
auxiliary kernel \eqref{eq:Kndef} is built from the one-step frozen density
$\tilde p_n(k-1,k;\cdot,y)=n^{d/2}q_y(u)$ rather than from $\delta_y$; since
$u=n^{1/2}(y-x)-n^{-1/2}b_y$ gives $\nabla_x=-n^{1/2}\nabla_u$ and $\nabla^2_x=n\nabla^2_u$, we
have $\nabla_x\tilde p_n(k-1,k)=-n^{(d+1)/2}\nabla_u q_y(u)$ and
$\nabla^2_x\tilde p_n(k-1,k)=n^{(d+2)/2}\nabla^2_u q_y(u)$, hence
\begin{equation}
\label{eq:Kdiag}
K_n(k-1,k;x,y)=\tfrac{n^{(d+2)/2}}2(\Sigma_x-\Sigma_y):\nabla^2 q_y(u)
-n^{(d+1)/2}(b_x-b_y)\cdot\nabla q_y(u).
\end{equation}
Comparing \eqref{eq:Hdiag3} with \eqref{eq:Kdiag}, the covariance terms cancel exactly and the
drift term $n^{(d+1)/2}(b_y-b_x)\cdot\nabla q_y(u)$ of \eqref{eq:Hdiag3} is exactly the drift term
of \eqref{eq:Kdiag}; therefore
\begin{equation}
H_n(k-1,k;x,y)-K_n(k-1,k;x,y)=\mathcal S_n .
\end{equation}
Finally, since $\norm{b_y-b_x}=O(\norm{x-y})$,
$\nabla q_x(u)-\nabla q_y(u)=O\big(\norm{x-y}(1+\norm{u}^3)e^{-c\norm{u}^2}\big)$ and
$\norm{x-y}=n^{-1/2}\norm{\theta}$, the leftover \eqref{eq:Hdiag-rem} obeys
\begin{equation}
\abs{\mathcal S_n}\le C\,n^{d/2}(1+\norm{\theta}^2)(1+\norm{u}^4)e^{-c\norm{u}^2}
\le C\,n^{d/2}e^{-\tilde c\norm{\theta}^2},
\end{equation}
where the last bound follows by absorbing the polynomial factor into the enlarged constant
$\tilde c>0$, using $\norm{u}=\norm{\theta-n^{-1/2}b_y}\le\norm{\theta}+1$. On the diagonal
$\rho=n^{-1/2}$, so $n^{-1/2}\rho^{-1}=1$, and $\varphi_{\tilde c,\rho}(y-x)=n^{d/2}e^{-\norm{\theta}^2/\tilde c}$;
hence, after relabelling the constant $\tilde c>0$,
\begin{equation}
\abs{H_n(k-1,k;x,y)-K_n(k-1,k;x,y)}
\le C\,n^{-1/2}\rho^{-1}\varphi_{\tilde c,\rho}(y-x).
\end{equation}
Renaming $\tilde c$ as $c$, both ranges $j<k-1$ and $j=k-1$ give the bound \eqref{eq:lattice-bound}:
in the range $j<k-1$ this is the conclusion of Step~1, while on the diagonal
$\rho=n^{-1/2}$, so $n^{-1/2}\rho^{-1}=1$ and the bound $Cn^{-1/2}\rho^{-1}\varphi_{c,\rho}(y-x)$
just obtained is stronger than $C[n^{-1/2}\rho^{-1}+n^{-1}\rho^{-3}]\varphi_{c,\rho}(y-x)$, and hence implies it. This proves the lemma.
\end{proof}

\begin{lemma}[Flow comparison]
\label{lem:kernel-flow}
Under Assumptions~\ref{ass:spatial}--\ref{ass:measLip}, there are constants
$C<\infty$ and $c>0$, independent of $n$, such that, for all $0\le j<k\le n$, $x,y\in\R^d$, and
$\rho=\rho_{jk}$,
\begin{equation}
\abs{H(t_j,t_k,x,y)-H^{(n)}(t_j,t_k,x,y)}
\le C\,n^{-1/2}\rho^{-1}\varphi_{c,\rho}(y-x).
\end{equation}
\end{lemma}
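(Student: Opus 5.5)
We write both kernels explicitly via \eqref{eq:kernelH} at $s=t_j$, $t=t_k$, and split the difference into a coefficient part and a density part. Abbreviate $\delta\Sigma^\nu:=\Sigma(s,x,\nu_s)-\Sigma(s,y,\nu_s)$ and $\delta b^\nu:=b(s,x,\nu_s)-b(s,y,\nu_s)$ for $\nu\in\{\mu,\mu^{(n)}\}$. Then
\begin{equation*}
H-H^{(n)}=\tfrac12\bigl(\delta\Sigma^{\mu}-\delta\Sigma^{\mu^{(n)}}\bigr):\nabla_x^2\tilde p
+\tfrac12\,\delta\Sigma^{\mu^{(n)}}:\nabla_x^2\bigl(\tilde p-\tilde p^{(n)}\bigr)
+(\text{the analogous two drift terms}).
\end{equation*}

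\textbf{Step 1: the coefficient part.} This is the place where the first-order part of Assumption~\ref{ass:measLip} (the case $\abs\alpha=1$) is needed. Writing
\begin{equation*}
\delta\Sigma^{\mu}-\delta\Sigma^{\mu^{(n)}}=\int_0^1\bigl[\nabla_x\Sigma(s,y+\tau(x-y),\mu_s)-\nabla_x\Sigma(s,y+\tau(x-y),\mu^{(n)}_s)\bigr](x-y)\,\dd\tau,
\end{equation*}
we get a bound $\Lambda\norm{x-y}\,\mathcal W_2(\mu_s,\mu^{(n)}_s)\le C n^{-1/2}\norm{x-y}$ by Lemma~\ref{lem:flowrate}; the same holds for $b$. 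Combining this with \eqref{eq:dpbound} ($\abs{\nabla^2_x\tilde p}\le C\rho^{-2}\varphi_{c,\rho}$) and Lemma~\ref{lem:kernel}(ii) ($\norm{x-y}\varphi_{c,\rho}\le C\rho\varphi_{c+1,\rho}$) gives $Cn^{-1/2}(\rho^{-1}+1)\varphi\le Cn^{-1/2}\rho^{-1}\varphi$.

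\textbf{Step 2: the density part.} Both $\tilde p$ and $\tilde p^{(n)}$ are Gaussian in $y-x$, with means $\bar m,\bar m^{(n)}$ and covariances $\bar\Sigma,\bar\Sigma^{(n)}$. By Assumption~\ref{ass:measLip} and Lemma~\ref{lem:flowrate},
\begin{equation*}
\norm{\bar m-\bar m^{(n)}}+\norm{\bar\Sigma-\bar\Sigma^{(n)}}\le\Lambda\int_s^t\mathcal W_2(\mu_u,\mu^{(n)}_u)\,\dd u\le C\rho^2n^{-1/2}.
\end{equation*}
No Riemann-sum error arises here, since both objects are continuous-time integrals; this is simpler than in Lemma~\ref{lem:compare-frozen}. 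Both covariances lie between $c_0\rho^2I$ and $C_0\rho^2I$, and both means are $O(\rho^2)$. Lemma~\ref{lem:kernel}(v) with $\abs\alpha\le2$ then gives
\begin{equation*}
\abs{\partial^\alpha_x(\tilde p-\tilde p^{(n)})}\le C\bigl[\rho n^{-1/2}+n^{-1/2}\bigr]\rho^{-\abs\alpha}\varphi_{c,\rho}\le Cn^{-1/2}\rho^{-\abs\alpha}\varphi_{c,\rho}.
\end{equation*}
The spatial Lipschitz bound $\norm{\delta\Sigma^{\mu^{(n)}}}+\norm{\delta b^{\mu^{(n)}}}\le C\norm{x-y}$ (Assumption~\ref{ass:spatial}) together with Lemma~\ref{lem:kernel}(ii) again converts $\norm{x-y}\rho^{-2}$ into $\rho^{-1}$. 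The density part is therefore $\le Cn^{-1/2}\rho^{-1}\varphi$.

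\textbf{Main obstacle.} The only delicate point is applying Lemma~\ref{lem:kernel}(v) uniformly. We must check that the comparison bound holds for second derivatives with the weight centred at $z=y-x$ rather than at the means; the means are $O(\rho^2)$ and are absorbed into $c$ exactly as in Step~1 of Lemma~\ref{lem:kernbound}. We must also keep a single enlarged $c$ across all terms, as in the section's convention. Summing the four contributions proves the claim.
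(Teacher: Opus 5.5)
Your proposal is correct and follows the paper's proof essentially step for step. You use the same split $\mathsf{A}\,\partial^2\tilde p-\mathsf{A}^{(n)}\partial^2\tilde p^{(n)}=(\mathsf{A}-\mathsf{A}^{(n)})\partial^2\tilde p+\mathsf{A}^{(n)}(\partial^2\tilde p-\partial^2\tilde p^{(n)})$, the mean-value theorem with the $\abs{\alpha}=1$ case of Assumption~\ref{ass:measLip} for the coefficient part, and Lemma~\ref{lem:kernel}(v) with mean and covariance gaps of size $C\rho^2n^{-1/2}$ for the density part. The ``obstacle'' you flag is in fact already handled inside Lemma~\ref{lem:kernel}(v): its weight is centred at $z=y-x$ under the hypothesis $\norm{m},\norm{m'}\le C\rho^2$, so nothing further is needed there.
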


\begin{proof}
Here $s=t_j$ and $t=t_k$, so that $\rho^2=t-s=t_k-t_j$; by \eqref{eq:kernelH} both $H$ and
$H^{(n)}$ are of the form
$\frac12\sum_{i,j}\mathsf{A}_{ij}\partial^2_{x_ix_j}\tilde p+\sum_i\mathsf{B}_i\partial_{x_i}\tilde p$, with
$\mathsf{A}_{ij}=\Sigma_{ij}(s,x,\mu_s)-\Sigma_{ij}(s,y,\mu_s)$, $\mathsf{B}_i=b_i(s,x,\mu_s)-b_i(s,y,\mu_s)$, and
$\mathsf{A}^{(n)}_{ij},\mathsf{B}^{(n)}_i$ their $\mu^{(n)}$-analogues. Writing
$\mathsf{A}_{ij}\partial^2\tilde p-\mathsf{A}^{(n)}_{ij}\partial^2\tilde p^{(n)}
=(\mathsf{A}_{ij}-\mathsf{A}^{(n)}_{ij})\partial^2\tilde p+\mathsf{A}^{(n)}_{ij}\big(\partial^2\tilde p-\partial^2\tilde p^{(n)}\big)$
(and similarly for $\mathsf{B}_i$) gives two families of terms. By the mean-value theorem and
Assumption~\ref{ass:measLip} (with $\abs{\alpha}=1$),
\begin{equation}
\abs{\mathsf{A}_{ij}-\mathsf{A}^{(n)}_{ij}}\le \Lambda\,\mathcal{W}_2\big(\mu_s,\mu^{(n)}_s\big)\,\norm{x-y},\qquad
\abs{\mathsf{B}_i-\mathsf{B}^{(n)}_i}\le \Lambda\,\mathcal{W}_2\big(\mu_s,\mu^{(n)}_s\big)\,\norm{x-y},
\end{equation}
while $\abs{\mathsf{A}^{(n)}_{ij}},\abs{\mathsf{B}^{(n)}_i}\le C\norm{x-y}$ by Assumption~\ref{ass:spatial}. On the
other hand, $\tilde p$ and $\tilde p^{(n)}$ are Gaussians with means and covariances differing
by $\norm{\bar m-\bar m^{(n)}},\norm{\bar\Sigma-\bar\Sigma^{(n)}}\le C\rho^2 n^{-1/2}$ (by
Assumption~\ref{ass:measLip} and \eqref{eq:flowrate}), so by the Gaussian comparison of
Lemma~\ref{lem:kernel}(v),
\begin{equation}
\abs{\partial^\alpha_x\tilde p-\partial^\alpha_x\tilde p^{(n)}}
\le C_\alpha n^{-1/2}\big[\rho^{1-\abs{\alpha}}+\rho^{-\abs{\alpha}}\big]\varphi_{c,\rho}(y-x),
\qquad \abs{\alpha}=1,2.
\end{equation}
Assembling the two families, using the derivative bounds of Lemma~\ref{lem:kernel}(iv) for the
undifferentiated factor, the weight bound of Lemma~\ref{lem:kernel}(ii), and $\rho\le1$,
\begin{equation}
\abs{H-H^{(n)}}\le C\,\mathcal{W}_2\big(\mu_s,\mu^{(n)}_s\big)\,\rho^{-1}\varphi_{c,\rho}(y-x)
+C\,n^{-1/2}\rho^{-1}\varphi_{c,\rho}(y-x)
\le C\,n^{-1/2}\rho^{-1}\varphi_{c,\rho}(y-x),
\end{equation}
by \eqref{eq:flowrate}. This proves the lemma.
\end{proof}

\begin{lemma}[Time-discretisation comparison]
\label{lem:kernel-time}
Under Assumptions~\ref{ass:spatial}--\ref{ass:measLip}, there are
constants $C<\infty$ and $c>0$, independent of $n$, such that, for all $0\le j<k\le n$,
$x,y\in\R^d$, and $\rho=\rho_{jk}$,
\begin{equation}
\abs{H^{(n)}(t_j,t_k,x,y)-K_n(j,k;x,y)}
\le C\,n^{-1/2}\rho^{-1}\varphi_{c,\rho}(y-x)+C\,n^{-1}\rho^{-3}\varphi_{c,\rho}(y-x).
\end{equation}
\end{lemma}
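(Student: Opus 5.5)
Both kernels are built with the same discrete flow $\mu^{(n)}$ and are evaluated with the same coefficient differences. Indeed, $H^{(n)}(t_j,t_k,x,y)=(L^{\mu^{(n)}}-\tilde L^{\mu^{(n)}})\tilde p^{(n)}(t_j,t_k,x,y)$ and $K_n(j,k;x,y)=(L^{\mu^{(n)}}-\tilde L^{\mu^{(n)}})\tilde p_n(\check j,k;x,y)$, with the generators in both taken at time $s=t_j$ with measure $\mu^{(n)}_{t_j}=\mu^{(n)}_j$. Writing $\mathsf A=\Sigma_x-\Sigma_y$ and $\mathsf B=b_x-b_y$, I would therefore start from the exact identity
\begin{equation*}
H^{(n)}-K_n=\mathsf B\cdot\nabla_x\big(\tilde p^{(n)}(t_j,t_k)-\tilde p_n(\check j,k)\big)+\tfrac12\mathsf A:\nabla_x^2\big(\tilde p^{(n)}(t_j,t_k)-\tilde p_n(\check j,k)\big).
\end{equation*}
Since $\norm{\mathsf A},\norm{\mathsf B}\le C\norm{x-y}$ by Assumption~\ref{ass:spatial}, and $\norm{x-y}\varphi_{c,\rho}\le C\rho\,\varphi_{c+1,\rho}$ by Lemma~\ref{lem:kernel}(ii), it suffices to bound the derivatives of order $\abs\alpha=1,2$ of the difference of the two Gaussian frozen densities by
\begin{equation*}
C\big[n^{-1/2}\rho^{-\abs\alpha}+n^{-1}\rho^{-2-\abs\alpha}\big]\varphi_{c,\rho}(y-x).
\end{equation*}
Multiplying by $\rho$, the second-order case then gives exactly $n^{-1/2}\rho^{-1}+n^{-1}\rho^{-3}$, and the first-order case is smaller because $\rho\le1$.

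I would insert the intermediate Gaussian $\tilde p_n(j,k)$ and split the difference into two pieces. The first is $\tilde p^{(n)}(t_j,t_k)-\tilde p_n(j,k)$, the Riemann-sum error. Its mean and covariance differ by $\int_{t_j}^{t_k}b(u,y,\mu^{(n)}_u)\dd u-\sum_{i=j}^{k-1}h\,b(t_i,y,\mu^{(n)}_i)$ and its $\Sigma$-analogue. By Remark~\ref{rem:autotime}, applied with the discrete flow via the third estimate of Lemma~\ref{lem:moments}, $u\mapsto b(u,y,\mu^{(n)}_u)$ is $\tfrac12$-H\"older uniformly in $n$. Hence each cell contributes $O(n^{-3/2})$, and both differences are $O(\rho^2n^{-1/2})$. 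Lemma~\ref{lem:kernel}(v) then bounds the $\alpha$-derivative of this piece by $Cn^{-1/2}[\rho+1]\rho^{-\abs\alpha}\varphi_{c,\rho}$, which is of the first type.

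The second piece is $\tilde p_n(j,k)-\tilde p_n(\check j,k)$, the index-shift error. It vanishes on the diagonal $j=k-1$, where $\check j=j$. For $j<k-1$ the two Gaussians differ by a single summand, so $\norm{\Delta m}\le Cn^{-1}$ and $\norm{\Delta\Sigma}\le Cn^{-1}$. Both covariances are comparable to $\rho^2$, because $\rho_{j+1,k}\le\rho\le2\rho_{j+1,k}$. Lemma~\ref{lem:kernel}(v) then gives the bound $C[n^{-1}\rho^{-1}+n^{-1}\rho^{-2}]\rho^{-\abs\alpha}\varphi_{c,\rho}$. The $n^{-1}\rho^{-2-\abs\alpha}$ part is the second type. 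The $n^{-1}\rho^{-1-\abs\alpha}$ part is dominated by $n^{-1/2}\rho^{-\abs\alpha}$, since $\rho\ge n^{-1/2}$.

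The main obstacle is the uniformity of the Gaussian comparison. Lemma~\ref{lem:kernel}(v) is stated for two covariances sandwiched between the same multiples of a single $\rho^2$, and the diagonal-type cases have very few grid cells. I would check carefully that all three Gaussians satisfy $c\rho^2I\preceq\Theta\preceq C_0\rho^2I$ with one common $\rho=\rho_{jk}$, after enlarging $c$ as in Step~1 of Lemma~\ref{lem:kernel-lattice}, and that all their means are $O(\rho^2)$ so that they can be absorbed into the weight. A second point to check is that the Riemann-sum estimate does not require differentiability of the measure flow in time: it should use only the $\tfrac12$-H\"older bound $\mathcal W_2(\mu^{(n)}_u,\mu^{(n)}_{t_i})\le C(u-t_i)^{1/2}$ together with Assumptions~\ref{ass:time} and \ref{ass:measLip}. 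Collecting the pieces and enlarging $c$ once gives the asserted bound.
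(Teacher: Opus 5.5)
Your proof is correct and follows the paper's argument. The paper also writes $H^{(n)}-K_n=(L^{\mu^{(n)}}-\tilde L^{\mu^{(n)}})\big(\tilde p^{(n)}(t_j,t_k)-\tilde p_n(\check j,k)\big)$ and bounds the mean and covariance discrepancies by $C(\rho^2n^{-1/2}+n^{-1})$: a $\tfrac12$-H\"older Riemann-sum error plus the $O(n^{-1})$ shift from $t_j$ to $t_{\check j}$. It then concludes with Lemma~\ref{lem:kernel}(v) and (ii). The only difference is cosmetic: you split at the level of the Gaussians through the intermediate $\tilde p_n(j,k)$ and apply the comparison twice, whereas the paper splits the parameter differences and applies it once. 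Your check that all covariances lie between $\tfrac{c_0}{2}\rho^2 I$ and $C_0\rho^2 I$ is exactly what makes either version work.
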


\begin{proof}
Since $H^{(n)}$ and $K_n$ share the flow $\mu^{(n)}$,
\begin{equation}
H^{(n)}-K_n=(L^{\mu^{(n)}}-\tilde L^{\mu^{(n)}})\big(\tilde p^{(n)}-\tilde p_n\big),
\end{equation}
where $\tilde p^{(n)}=\tilde p^{(n)}(t_j,t_k,\cdot,y)$ and $\tilde p_n=\tilde p_n(\check j,k;\cdot,y)$.
Splitting the shift of the initial time from $t_j$ to $t_{\check j}$ off the Riemann-sum
discretisation, and using \eqref{eq:mbarn-ndef} and \eqref{eq:frozen-disc},
\[
\begin{aligned}
\bar m^{(n)}(t_j,t_k,y)-\bar m_n(\check j,k;y)
&=\int_{t_j}^{t_{\check j}}b(u,y,\mu^{(n)}_u)\dd u\\
&\quad+\Big[\int_{t_{\check j}}^{t_k}b(u,y,\mu^{(n)}_u)\dd u
-\sum_{i=\check j}^{k-1}h\,b(t_i,y,\mu^{(n)}_i)\Big],\\
\bar\Sigma^{(n)}(t_j,t_k,y)-\bar\Sigma_n(\check j,k;y)
&=\int_{t_j}^{t_{\check j}}\Sigma(u,y,\mu^{(n)}_u)\dd u\\
&\quad+\Big[\int_{t_{\check j}}^{t_k}\Sigma(u,y,\mu^{(n)}_u)\dd u
-\sum_{i=\check j}^{k-1}h\,\Sigma(t_i,y,\mu^{(n)}_i)\Big],
\end{aligned}
\]
where the two single integrals are bounded by $Cn^{-1}$ (since $t_{\check j}-t_j\le h=n^{-1}$) and
vanish when $\check j=j$, while the bracketed terms are left-endpoint Riemann-sum errors of order
$C\rho^2n^{-1/2}$ by Remark~\ref{rem:autotime}. Hence
\begin{equation}
\norm{\bar m^{(n)}-\bar m_n},\ \norm{\bar\Sigma^{(n)}-\bar\Sigma_n}\le C\big(\rho^2 n^{-1/2}+n^{-1}\big).
\end{equation}
Lemma~\ref{lem:kernel}(v) then gives, for $\abs{\alpha}=1,2$,
\begin{equation}
\abs{\partial^\alpha_x\big(\tilde p^{(n)}-\tilde p_n\big)}
\le C_\alpha\Big[(\rho^2 n^{-1/2}+n^{-1})\rho^{-1}+(\rho^2 n^{-1/2}+n^{-1})\rho^{-2}\Big]
\rho^{-\abs{\alpha}}\varphi_{c,\rho}(y-x).
\end{equation}
Applying the operator $(L^{\mu^{(n)}}-\tilde L^{\mu^{(n)}})$, whose coefficients are bounded by
$C\norm{x-y}$ (Assumption~\ref{ass:spatial}), together with the weight bound
$\norm{x-y}\varphi_{c,\rho}\le C\rho\,\varphi_{c',\rho}$ (Lemma~\ref{lem:kernel}(ii)), the
dominant terms are $n^{-1/2}\rho^{-1}$ (from the discretisation error) and $n^{-1}\rho^{-3}$ (from the
$O(n^{-1})$ initial-time shift acting through $\nabla^2$), so that
\begin{equation}
\abs{H^{(n)}-K_n}\le C\,n^{-1/2}\rho^{-1}\varphi_{c,\rho}(y-x)
+C\,n^{-1}\rho^{-3}\varphi_{c,\rho}(y-x).
\end{equation}
This proves the lemma.
\end{proof}

\begin{corollary}[Triangle-inequality combination]
\label{cor:approx-kernel}
Under Assumptions~\ref{ass:spatial}--\ref{ass:measLip}, there
are constants $C<\infty$ and $c>0$, independent of $n$, such that, for all $0\le j<k\le n$,
$x,y\in\R^d$, and $\rho=\rho_{jk}$,
\begin{equation}
\label{eq:HmKn-bound}
\abs{H(t_j,t_k,x,y)-K_n(j,k;x,y)}
\le C\,n^{-1/2}\rho^{-1}\varphi_{c,\rho}(y-x)+C\,n^{-1}\rho^{-3}\varphi_{c,\rho}(y-x).
\end{equation}
\end{corollary}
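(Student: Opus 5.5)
The corollary follows from the triangle inequality along the hierarchy \eqref{eq:hierarchy}, stopping at $K_n$ since the lattice step is not needed. For fixed $0\le j<k\le n$, $x,y\in\R^d$ and $\rho=\rho_{jk}$, I would write
\begin{equation*}
H(t_j,t_k,x,y)-K_n(j,k;x,y)=\big[H-H^{(n)}\big](t_j,t_k,x,y)+\big[H^{(n)}(t_j,t_k,x,y)-K_n(j,k;x,y)\big].
\end{equation*}
The first bracket is controlled by Lemma~\ref{lem:kernel-flow}, which gives $C\,n^{-1/2}\rho^{-1}\varphi_{c_1,\rho}(y-x)$ with some constant $c_1$. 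The second bracket is controlled by Lemma~\ref{lem:kernel-time}, which gives $C\,n^{-1/2}\rho^{-1}\varphi_{c_2,\rho}(y-x)+C\,n^{-1}\rho^{-3}\varphi_{c_2,\rho}(y-x)$.

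The two Gaussian constants are then unified. Following the convention on the Gaussian constant, I would set $c:=\max(c_1,c_2)$. Since $\varphi_{c,\rho}$ is pointwise nondecreasing in $c$ for fixed $\rho$ (the prefactor $\rho^{-d}$ does not depend on $c$), each bound remains valid with $\varphi_{c,\rho}$ in place of $\varphi_{c_i,\rho}$. Adding the two estimates and combining the constants gives \eqref{eq:HmKn-bound}. Both $C$ and $c$ are independent of $n,j,k,x,y$, because they come from finitely many constants of the two lemmas, each of which is independent of these parameters.

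There is no substantive obstacle. The only points to check are that both lemmas are stated at the same grid pair $(t_j,t_k)$ with the same width $\rho=\rho_{jk}$, which they are, and that the diagonal $j=k-1$ is included in both, which holds because each lemma is stated for all $0\le j<k\le n$. The lattice comparison, Lemma~\ref{lem:kernel-lattice}, is not used here. It would only be added if one also wanted to compare $H$ with $H_n$, and that comparison would give the same form of bound.
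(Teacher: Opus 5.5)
Your proposal is correct and is essentially the paper's own proof: the triangle inequality $\abs{H-K_n}\le\abs{H-H^{(n)}}+\abs{H^{(n)}-K_n}$ at the grid pair $(t_j,t_k)$, with the two pieces bounded by Lemma~\ref{lem:kernel-flow} and Lemma~\ref{lem:kernel-time}. You also make explicit the unification of the Gaussian constants via $c:=\max(c_1,c_2)$ and the monotonicity of $\varphi_{c,\rho}$ in $c$, which the paper leaves to its standing convention on $c$.
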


\begin{proof}
By the triangle inequality along the hierarchy \eqref{eq:hierarchy} and
Lemmas~\ref{lem:kernel-flow}--\ref{lem:kernel-time},
$\abs{H-K_n}\le\abs{H-H^{(n)}}+\abs{H^{(n)}-K_n}$, and the right-hand side is
bounded by the sum of the two stated estimates.
\end{proof}

\subsection{Bounds on the discrete kernels and discrete propagation}
\label{sec:disc-kernel-bounds}

\begin{lemma}
\label{lem:disc-kernel-bounds}
Under Assumptions~\ref{ass:spatial} and \ref{ass:elliptic}, there are constants
$C_1<\infty$ (the constant of Lemma~\ref{lem:kernbound}, enlarged if necessary) and $c>0$
such that, for all $0\le j<k\le n$, $x,y\in\R^d$, and $\rho=\rho_{jk}$,
\begin{enumerate}[label=(\roman*),leftmargin=2.2em]
\item $\abs{K_n(j,k;x,y)}+\abs{H_n(j,k;x,y)}\le C_1\,\rho_{jk}^{-1}\varphi_{c,\rho_{jk}}(y-x)$;
\item \emph{(discrete propagation estimate)} if $G$ and $F_1,\dots,F_r$ ($r\ge0$) satisfy, for all
$0\le a<b\le n$ (with $\rho_{ab}=\rho(t_a,t_b)$),
\begin{equation}
\abs{G(a,b;x,y)}\le M\,\varphi_{c,\rho_{ab}}(y-x),\qquad
\abs{F_\ell(a,b;x,y)}\le C_{1,\ell}\,\rho_{ab}^{-1}\varphi_{c,\rho_{ab}}(y-x),
\end{equation}
then, for all $0\le j<k\le n$ and $0\le r\le k-j$ (with the degenerate terms $i_1=j$ read as explained after \eqref{eq:convn}),
\begin{equation}
\label{eq:disc-iter}
\abs{G\circledast F_1\circledast\cdots\circledast F_r}(j,k;x,y)
\le M\,\Big(\prod_{\ell=1}^{r}C_{1,\ell}\Big)\rho_{jk}^{r}\,\Gamma^{-1}\big(1+\tfrac r2\big)\,\varphi_{c,\rho_{jk}}(y-x),
\end{equation}
with the constant $c$ independent of $r$, of $n$ and of the constants $C_{1,\ell}$ (the
latter may be enlarged if necessary).
\end{enumerate}
\end{lemma}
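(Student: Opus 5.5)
For (i), I will bound $K_n$ and $H_n$ separately. For $K_n$ I will use the definition \eqref{eq:Kndef}. The frozen density $\tilde p_n(\check j,k;\cdot,y)$ is Gaussian, with mean $O(\rho^2)$ and covariance between $c_0\rho_{\check jk}^2 I$ and $C_0\rho_{\check jk}^2 I$. Moreover $\rho_{\check j k}\ge\rho_{jk}/\sqrt2$ off the diagonal, and $\check j=j$ on it. So Lemma~\ref{lem:kernel}(iv), after absorbing the mean shift as in Step~1 of Lemma~\ref{lem:kernbound}, gives $\abs{\partial^\alpha_x\tilde p_n}\le C\rho^{-\abs\alpha}\varphi_{c,\rho}(y-x)$ for $\abs\alpha\le2$. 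The coefficient differences $b_x-b_y$ and $\Sigma_x-\Sigma_y$ are $O(\norm{x-y})$ by Assumption~\ref{ass:spatial}. Item (ii) of Lemma~\ref{lem:kernel} then yields $\abs{K_n}\le C(1+\rho^{-1})\varphi\le C\rho^{-1}\varphi$, exactly as in Step~2 of Lemma~\ref{lem:kernbound}. For $H_n$ I will not redo the Taylor analysis. I will write $\abs{H_n}\le\abs{K_n}+\abs{H_n-K_n}$ and invoke Lemma~\ref{lem:kernel-lattice}. Since $\rho\ge n^{-1/2}$, we have $n^{-1/2}\rho^{-1}\le1\le\rho^{-1}$ and $n^{-1}\rho^{-3}\le\rho^{-1}$, so the lattice error is also $O(\rho^{-1}\varphi)$. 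Enlarging $C_1$ and $c$ finishes (i).

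For (ii) I will argue by induction on $r$, mirroring Steps 3--4 of Lemma~\ref{lem:kernbound} with integrals replaced by Riemann sums. The case $r=0$ is the hypothesis on $G$. Expanding the $r$-fold grid convolution as a chain sum over $j\le i_1<\dots<i_r\le k-1$, each term is $h^r$ times an iterated spatial integral. I will bound the integrand by the product of the hypotheses. Integrating out $z_1,\dots,z_r$ with the exact identity $\varphi_{c,\rho}*\varphi_{c,\rho'}=(\pi c)^{d/2}\varphi_{c,\sqrt{\rho^2+\rho'^2}}$ telescopes the widths to $\rho_{jk}$. This leaves
\begin{equation*}
M\Big(\prod_\ell C_{1,\ell}\Big)(\pi c)^{dr/2}\,\varphi_{c,\rho_{jk}}(y-x)\; h^r\sum_{j\le i_1<\cdots<i_r\le k-1}\prod_{\ell=1}^r (t_{i_{\ell+1}}-t_{i_\ell})^{-1/2}.
\end{equation*}
The degenerate terms $i_1=j$ are read as after \eqref{eq:convn}, that is, as if all widths were positive. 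Because the constant $c$ in \eqref{eq:disc-iter} must not depend on $r$, I will not use item (i) of Lemma~\ref{lem:kernel}, which would increase $c$. I will only use the exact same-$c$ identity. The factor $(\pi c)^{d/2}$ per step is absorbed into $C_{1,\ell}$, which the statement permits.

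The main step is the discrete analogue of $J_r$. I need
\begin{equation*}
h^r\sum_{j\le i_1<\cdots<i_r\le k-1}\prod_{\ell=1}^r(t_{i_{\ell+1}}-t_{i_\ell})^{-1/2}\le C^r\,\frac{\Gamma(1/2)^r}{\Gamma(1+r/2)}\,\rho_{jk}^{r},
\end{equation*}
with $C$ independent of $n$ and $r$, and $i_{r+1}=k$. I plan to compare each summand with the integral over a cell. Since $t\mapsto(t_b-t)^{-1/2}$ is increasing on $[t_{i-1},t_i]$ (hence $(t_b-t_i)^{-1/2}\ge(t_b-t)^{-1/2}$ there), a naive monotone cell comparison points the wrong way. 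So I will use the cruder but uniform bound $(t_b-t_i)^{-1/2}\le\sqrt2\,(t_b-u)^{-1/2}$ for $u\in[t_i,t_{i+1}]$ and $i<b$, which holds because $t_b-u\le t_b-t_i\le 2(t_b-u)$ when $u\le t_{i+1}\le t_b$ up to the last cell. The last cell $u\in[t_{b-1},t_b]$ needs separate care: there $t_b-u$ ranges down to $0$, and $\int_{t_{b-1}}^{t_b}(t_b-u)^{-1/2}du=2h^{1/2}\ge h\cdot h^{-1/2}$, so the inequality still holds after integration. A cleaner route is induction on $r$, comparing $h\sum_{i_r}(t_k-t_{i_r})^{-1/2}(t_{i_r}-t_j)^{(r-1)/2}$ with the Beta integral of Lemma~\ref{lem:kernel}(iii) cell by cell. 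Either way I get the continuous $J_r$ times $(\sqrt2)^r$ or a similar $C^r$, and the $C^r$ is again absorbed into the $C_{1,\ell}$. I expect this Riemann-sum-versus-integral step, keeping the constant geometric in $r$ and handling the cells adjacent to the endpoints, to be the only real obstacle.
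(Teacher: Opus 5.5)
Your proposal is correct and follows the paper's proof essentially step for step. In (i) you use the derivative bounds of Lemma~\ref{lem:kernel}(iv) for the Gaussian $\tilde p_n(\check j,k;\cdot,y)$, together with the triangle inequality against Lemma~\ref{lem:kernel-lattice}; in (ii) you use the exact same-$c$ convolution identity followed by an induction on $r$ against the Beta integral of Lemma~\ref{lem:kernel}(iii), which is your ``cleaner route''. The Riemann-sum step you flag as the main obstacle is harmless: the summand $(k-i)^{-1/2}(i-j)^{(r-1)/2}$ is increasing in $i$, so it can be compared pointwise with the integral over the right-hand cell $[i,i+1]$ (even with constant $1$), and the last cell needs no separate treatment.
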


\begin{proof}
\emph{(i)} By \eqref{eq:Kndef} and the Lipschitz continuity of $\Sigma,b$ in $x$
(Assumption~\ref{ass:spatial}), the coefficient differences in $K_n$ are bounded by $C\norm{x-y}$,
and the derivative bounds of Lemma~\ref{lem:kernel}(iv) (the frozen density
$\tilde p_n(\check j,k;\cdot,y)$ is Gaussian with covariance
$\bar\Sigma_n(\check j,k;y)\succeq c_0\rho_{\check j,k}^2 I$, and
$\rho_{\check j,k}\le\rho_{jk}\le2\rho_{\check j,k}$) give
\begin{equation}
\abs{\partial^\alpha_x\tilde p_n(\check j,k;x,y)}
\le C_\alpha\,\rho_{jk}^{-\abs{\alpha}}\,\varphi_{c,\rho_{jk}}(y-x),\qquad \abs{\alpha}=1,2.
\end{equation}
Hence, with the weight bound of Lemma~\ref{lem:kernel}(ii),
\begin{equation}
\abs{K_n(j,k;x,y)}
\le C\norm{x-y}\big[\rho_{jk}^{-2}+\rho_{jk}^{-1}\big]\varphi_{c,\rho_{jk}}(y-x)
\le C_1\,\rho_{jk}^{-1}\varphi_{c,\rho_{jk}}(y-x),
\end{equation}
and, by Lemma~\ref{lem:kernel-lattice} together with
$n^{-1/2}\rho_{jk}^{-1}\le\rho_{jk}^{-1}$ and $n^{-1}\rho_{jk}^{-3}\le\rho_{jk}^{-1}$ (valid since
$n\ge1$ and $\rho_{jk}\ge n^{-1/2}$),
\begin{equation}
\abs{H_n(j,k;x,y)}
\le\abs{K_n(j,k;x,y)}+\abs{H_n(j,k;x,y)-K_n(j,k;x,y)}
\le C_1\,\rho_{jk}^{-1}\varphi_{c,\rho_{jk}}(y-x),
\end{equation}
proving (i).

\emph{(ii)} The case $r=0$ is immediate from the hypothesis on $G$. For $r\ge1$, expanding the
convolutions with the chains introduced after \eqref{eq:convn} (so that every width
$\rho_{i_\ell,i_{\ell+1}}$ with $\ell\ge1$ below is positive),
\begin{multline}
\big(G\circledast F_1\circledast\cdots\circledast F_r\big)(j,k;x,y)\\
=\sum_{j\le i_1<\cdots<i_r\le k-1}n^{-r}\int_{\R^{dr}}
G(j,i_1;x,z_1)\Big[\prod_{\ell=1}^{r-1}F_\ell(i_\ell,i_{\ell+1};z_\ell,z_{\ell+1})\Big]
F_r(i_r,k;z_r,y)\,\dd z_1\cdots\dd z_r,
\end{multline}
with $i_0=j$, $i_{r+1}=k$. Integrating out $z_1,\dots,z_r$ successively using the exact convolution
identity (all factors here share the same constant $c$; as in the proof of Lemma~\ref{lem:kernel}(i))
\begin{equation}
\varphi_{c,\rho}*\varphi_{c,\rho'}=(\pi c)^{d/2}\varphi_{c,\sqrt{\rho^2+\rho'^2}},
\end{equation}
the widths telescope in quadrature to $\rho_{jk}$, while each integration contributes a factor
$(\pi c)^{d/2}$, so the spatial factor is $(\pi c)^{dr/2}\varphi_{c,\rho_{jk}}(y-x)$. Hence
\begin{equation}
\begin{split}
\abs{G\circledast F_1\circledast\cdots\circledast F_r}(j,k;x,y)
&\le M\,\Big(\prod_{\ell=1}^{r}C_{1,\ell}\Big)(\pi c)^{dr/2}\,S_r(j,k)\,\varphi_{c,\rho_{jk}}(y-x),\\
S_r(j,k)&:=\sum_{j\le i_1<\cdots<i_r\le k-1}n^{-r}\prod_{\ell=1}^{r}\rho_{i_\ell,i_{\ell+1}}^{-1}.
\end{split}
\end{equation}
We claim
\begin{equation}
S_r(j,k)\le C\,\Gamma\big(\tfrac12\big)^{r}\,\Gamma^{-1}\big(1+\tfrac r2\big)\rho_{jk}^r.
\end{equation}
For $r=1$,
\begin{equation}
S_1(j,k)=n^{-1}\sum_{i=j}^{k-1}\rho_{ik}^{-1}
=n^{-1/2}\sum_{m=1}^{k-j}m^{-1/2}\le Cn^{-1/2}(k-j)^{1/2}=C\rho_{jk}.
\end{equation}
For the induction step, grouping by the last index $i_r$,
\begin{equation}
S_r(j,k)=\sum_{i=j+1}^{k-1}n^{-1}\,\rho_{ik}^{-1}S_{r-1}(j,i)
\le C\,\Gamma\big(\tfrac12\big)^{r-1}\Gamma^{-1}\big(1+\tfrac{r-1}{2}\big)
n^{-r/2}\sum_{i=j+1}^{k-1}(k-i)^{-1/2}(i-j)^{(r-1)/2},
\end{equation}
using $\rho_{ik}^{-1}=n^{1/2}(k-i)^{-1/2}$ and $\rho_{ij}^{r-1}=n^{-(r-1)/2}(i-j)^{(r-1)/2}$. The
summand increases with $i$, so the Riemann sum is bounded by the corresponding integral; by
Lemma~\ref{lem:kernel}(iii) (with $\alpha=-1$, $\beta=r-1$),
\begin{equation}
\sum_{i=j+1}^{k-1}(k-i)^{-1/2}(i-j)^{(r-1)/2}
\le C\int_j^k(k-u)^{-1/2}(u-j)^{(r-1)/2}\dd u
=C(k-j)^{r/2}\,\mathrm{B}\big(\tfrac12,\tfrac{r+1}{2}\big).
\end{equation}
Since $n^{-r/2}(k-j)^{r/2}=\rho_{jk}^r$ and
$\mathrm{B}\big(\tfrac12,\tfrac{r+1}{2}\big)
=\Gamma\big(\tfrac12\big)\Gamma\big(\tfrac{r+1}{2}\big)\Gamma^{-1}\big(1+\tfrac r2\big)$
with $\Gamma\big(1+\tfrac{r-1}{2}\big)=\Gamma\big(\tfrac{r+1}{2}\big)$, we get
\begin{equation}
S_r(j,k)\le C\,\Gamma\big(\tfrac12\big)^{r}\,\Gamma^{-1}\big(1+\tfrac r2\big)\rho_{jk}^r.
\end{equation}
Absorbing $C$, $(\pi c)^{d/2}$ and $\sqrt{\pi}=\Gamma(\tfrac12)$ into relabelled constants
$C_{1,\ell}$ yields \eqref{eq:disc-iter}: the series
\begin{equation*}
\sum_{r\ge0}\Big(\prod_{\ell=1}^{r}C_{1,\ell}\Big)\rho_{jk}^r\,\Gamma^{-1}\big(1+\tfrac r2\big)
\end{equation*}
converges uniformly in $\rho_{jk}\le1$ for $C_{1,\ell}\le C_1$, because
$\Gamma^{-1}(1+\tfrac r2)$ decays superexponentially.
\end{proof}

We shall also need the following purely combinatorial lattice-sum estimate, in which neither the
measure flow nor the coefficients appear.

\begin{lemma}[Hard-class chain sums]
\label{lem:hard-class}
There is a constant $C<\infty$, independent of $n$, such that, for all $0\le j<k\le n$,
$1\le r\le k-j$ and $1\le m\le r$, with $h=n^{-1}$, $i_{r+1}=k$ and
\begin{equation}
\label{eq:hard-class}
\mathcal H_m:=h^{r}n^{-1}\!\!\sum_{j\le i_1<\dots<i_r\le k-1}
\Big[\prod_{\substack{1\le\ell\le r\\ \ell\ne m}}\rho_{i_\ell,i_{\ell+1}}^{-1}\Big]\rho_{i_m,i_{m+1}}^{-3},
\end{equation}
one has
\begin{equation}
\label{eq:T2-hard-bound}
\mathcal H_m\le C\,n^{-1/2}\,\Gamma(\tfrac12)^{r-1}\,\Gamma^{-1}(\tfrac{r+1}{2})\,\rho_{jk}^{r-1},
\qquad m=1,\dots,r.
\end{equation}
\end{lemma}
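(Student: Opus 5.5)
The plan is to use the factor $n^{-1}$ in front of $\mathcal H_m$ to make the singular factor $\rho_{i_m,i_{m+1}}^{-3}$ summable. I would then sum out the single index $i_m$ explicitly, which reduces $\mathcal H_m$ to $C\,n^{-1/2}$ times an ordinary $(r-1)$-fold chain sum. That chain sum is bounded by the induction already carried out for $S_r$ in the proof of Lemma~\ref{lem:disc-kernel-bounds}(ii). The key identity is
\begin{equation*}
h\cdot n^{-1}\rho_{i_m,i_{m+1}}^{-3}=n^{-2}\,n^{3/2}(i_{m+1}-i_m)^{-3/2}=n^{-1/2}(i_{m+1}-i_m)^{-3/2},
\end{equation*}
which is summable in the gap $i_{m+1}-i_m\ge1$. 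By contrast, each ordinary factor satisfies $h\rho_{i_\ell,i_{\ell+1}}^{-1}=n^{-1/2}(i_{\ell+1}-i_\ell)^{-1/2}$.

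\emph{Case $m=1$.} Here the weight attached to the first interval $[j,i_1]$ is $1$, since the leading factor carries no $\rho^{-1}$. Fix $i_2<\dots<i_r$ and sum over $i_1<i_2$:
\begin{equation*}
\sum_{i_1<i_2}n^{-1/2}(i_2-i_1)^{-3/2}\le \zeta(\tfrac32)\,n^{-1/2}.
\end{equation*}
What remains is $\sum_{i_2<\dots<i_r}\prod_{\ell=2}^{r}h\rho_{i_\ell,i_{\ell+1}}^{-1}$. This is a chain sum with $r-1$ indices in $\{j+1,\dots,k-1\}$, so it is dominated by $S_{r-1}(j,k)$. By the claim in the proof of Lemma~\ref{lem:disc-kernel-bounds}(ii),
\begin{equation*}
S_{r-1}(j,k)\le C\,\Gamma(\tfrac12)^{r-1}\Gamma^{-1}(1+\tfrac{r-1}2)\rho_{jk}^{r-1},
\end{equation*}
and $1+\tfrac{r-1}2=\tfrac{r+1}2$. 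This gives \eqref{eq:T2-hard-bound} for $m=1$. When $r=1$ the remaining sum is empty and equals $1$, which is consistent with the bound.

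\emph{Case $m\ge2$.} Fix all indices except $i_m$, and write $a=i_m-i_{m-1}\ge1$, $b=i_{m+1}-i_m\ge1$, $L=a+b=i_{m+1}-i_{m-1}$. The two factors depending on $i_m$ contribute
\begin{equation*}
n^{-1/2}a^{-1/2}\cdot n^{-1/2}b^{-3/2}.
\end{equation*}
I would bound $\sum_{a+b=L}a^{-1/2}b^{-3/2}\le C L^{-1/2}$ by splitting at $b\le L/2$ and $b>L/2$. On the first half, $a\ge L/2$, so the sum is at most $(L/2)^{-1/2}\zeta(3/2)$. On the second half, $b^{-3/2}\le (L/2)^{-3/2}$ and $\sum_{a\le L}a^{-1/2}\le 2L^{1/2}$, so the sum is at most $CL^{-3/2}L^{1/2}\le CL^{-1/2}$. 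Summing out $i_m$ therefore yields at most
\begin{equation*}
C\,n^{-1/2}\cdot n^{-1/2}L^{-1/2}=C\,n^{-1/2}\,h\rho_{i_{m-1},i_{m+1}}^{-1}.
\end{equation*}
This is $C n^{-1/2}$ times the ordinary factor of the merged interval $[i_{m-1},i_{m+1}]$. The remaining sum runs over the $(r-1)$-chain $i_1<\dots<i_{m-1}<i_{m+1}<\dots<i_r$, and each such chain arises only once because $i_m$ has been summed out. It is therefore exactly $S_{r-1}(j,k)$, and the same bound as in the case $m=1$ applies.

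The main obstacle is keeping the constant independent of $r$ and $m$. This works here because the collapse is performed once, with the universal constants $\zeta(3/2)$ and $C$, and all of the $r$-dependence is carried by the already-established $S_{r-1}$ bound. A secondary point is the edge conventions, namely the degenerate reading of $i_1=j$ and the empty products when $r=1$ or $m=r$ with $i_{r+1}=k$. I would check these directly; they only make the sums smaller.
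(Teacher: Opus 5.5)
Your proof is correct, but it is organised differently from the paper's.

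\textbf{The paper's route.} It passes to the gaps $d_\ell=i_{\ell+1}-i_\ell$. The summation domain ($d_0\ge0$, $d_1,\dots,d_r\ge1$, $\sum_\ell d_\ell=k-j$) is invariant under permutations of $d_1,\dots,d_r$, so $\mathcal H_m$ does not depend on $m$, and the paper treats only $m=r$. For fixed $i_r$ it bounds the inner chain by $S_{r-1}(j,i_r)$ and applies the crude bound $(i_r-j)^{(r-1)/2}\le(k-j)^{(r-1)/2}$. The free endpoint then contributes $\sum_{a\ge1}a^{-3/2}$.

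\textbf{Your route.} You work slot by slot and merge the special gap with its left neighbour through $\sum_{a+b=L}a^{-1/2}b^{-3/2}\le CL^{-1/2}$. When $m=1$ you just use $\sum_b b^{-3/2}\le\zeta(\tfrac32)$, since the left neighbour is then the unweighted leading interval. This yields directly $\mathcal H_m\le C\,n^{-1/2}S_{r-1}(j,k)$ for every $m$.

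\textbf{Comparison.} The symmetry trick is shorter and avoids the two-sided splitting of the convolution sum. However, it depends on all $r$ slots having identical structure (gap $\ge1$, exponent $-\tfrac12$). Your merging step is local and uses only the left neighbour of the special slot, so it would survive if different slots carried different weights. It also makes explicit that the hard factor $h\,n^{-1}\rho^{-3}=n^{-1/2}b^{-3/2}$ is $n^{-1/2}$ times a summable kernel in the gap, i.e.\ it behaves like $n^{-1/2}$ times an approximate identity.

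\textbf{Two small remarks.}
\begin{enumerate}
\item After summing out $i_m$, the remaining sum is a sub-sum of $S_{r-1}(j,k)$ rather than ``exactly'' it, because chains with $i_{m+1}-i_{m-1}=1$ contribute nothing. This suffices by nonnegativity.
\item Both routes need the $S_{r-1}$ bound with an $r$-independent constant. That does hold, in fact with constant $1$: $\sum_{q\le N}q^{-1/2}\le2N^{1/2}$, and the sum-to-integral comparison for the increasing summand costs no factor.
\end{enumerate}
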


\begin{proof}
\emph{(a) All slots give the same value.} Pass to the gaps $d_\ell:=i_{\ell+1}-i_\ell$ ($d_0\ge0$, and
$d_\ell\ge1$ for $\ell\ge1$),
for which $(i_1,\dots,i_r)\leftrightarrow(d_0,\dots,d_r)$ is a bijection onto
the tuples with $\sum_{\ell=0}^{r}d_\ell=k-j$. Substituting
$\rho_{ab}^{-1}=n^{1/2}(b-a)^{-1/2}$ and $\rho_{i_m,i_{m+1}}^{-3}=n^{3/2}d_m^{-3/2}$, the prefactors
of the $r-1$ factors $\rho^{-1}$, of the $r$ convolution weights $h^{r}=n^{-r}$, of the prefactor
$n^{-1}$ of $\rho_{i_m,i_{m+1}}^{-3}$ and of $\rho_{i_m,i_{m+1}}^{-3}$ itself combine into
$n^{-r}n^{-1}n^{(r-1)/2}n^{3/2}=n^{-r/2}$, so that \eqref{eq:hard-class} becomes
\begin{equation}
\label{eq:T2-gaps}
\mathcal H_m=n^{-r/2}\!\!\sum_{\substack{d_0\ge0,\ d_1,\dots,d_r\ge1\\ d_0+\dots+d_r=k-j}}
d_m^{-3/2}\prod_{\substack{1\le\ell\le r\\ \ell\ne m}}d_\ell^{-1/2}.
\end{equation}
This depends on $m$ only through which of the gaps $d_1,\dots,d_r$ carries the exponent $-3/2$;
since the domain is invariant under permutations of the $r$ gaps $d_1,\dots,d_r$, all of which satisfy
$d_\ell\ge1$, the value is independent of
$m$. We may therefore take $m=r$, so that $d_r=k-i_r$ is the special (last) gap and
\begin{equation}
\label{eq:Sh-sum}
\mathcal H_r=n^{-r/2}\!\sum_{j\le i_1<\dots<i_r\le k-1}
\Big[\prod_{\ell=1}^{r-1}(i_{\ell+1}-i_\ell)^{-1/2}\Big](k-i_r)^{-3/2}.
\end{equation}

\emph{(b) The chain on the left of the special gap.} Fix $i_r$. The sum over
$j\le i_1<\dots<i_{r-1}<i_r$ is exactly the index form of the quantity $S_{r-1}(j,i_r)$ estimated in
the proof of Lemma~\ref{lem:disc-kernel-bounds}(ii) (induction on $r$, the outermost sum being
evaluated by the Beta integral); explicitly,
\begin{equation}
\label{eq:T2-inner}
\sum_{j\le i_1<\dots<i_{r-1}<i_r}\prod_{\ell=1}^{r-1}(i_{\ell+1}-i_\ell)^{-1/2}
\le C\,\Gamma(\tfrac12)^{r-1}\,\Gamma^{-1}(\tfrac{r+1}{2})\,(i_r-j)^{(r-1)/2},
\end{equation}
where $\Gamma(1+\tfrac{r-1}{2})=\Gamma(\tfrac{r+1}{2})$. Inserting \eqref{eq:T2-inner} into
\eqref{eq:Sh-sum},
\begin{equation}
\label{eq:T2-hard-sum1}
\mathcal H_r
\le C\,n^{-r/2}\Gamma(\tfrac12)^{r-1}\Gamma^{-1}(\tfrac{r+1}{2})
\sum_{i_r=j}^{k-1}(k-i_r)^{-3/2}(i_r-j)^{(r-1)/2}.
\end{equation}

\emph{(c) The sum over the free endpoint converges.} With $a:=k-i_r\ge1$ and
$(i_r-j)^{(r-1)/2}\le(k-j)^{(r-1)/2}$,
\begin{equation}
\label{eq:T2-hard-sum2}
\sum_{i_r=j}^{k-1}(k-i_r)^{-3/2}(i_r-j)^{(r-1)/2}
\le(k-j)^{(r-1)/2}\sum_{a\ge1}a^{-3/2}\le C\,(k-j)^{(r-1)/2}.
\end{equation}
Finally $(k-j)^{(r-1)/2}=n^{(r-1)/2}\rho_{jk}^{r-1}$, so that in \eqref{eq:T2-hard-sum1} the
prefactor is $n^{-r/2}n^{(r-1)/2}=n^{-1/2}$ and
\begin{equation}
\label{eq:T2-hard-final}
\mathcal H_r
\le C\,n^{-1/2}\,\Gamma(\tfrac12)^{r-1}\,\Gamma^{-1}(\tfrac{r+1}{2})\,\rho_{jk}^{r-1},
\end{equation}
which is \eqref{eq:T2-hard-bound}.

It is worth recording why the a priori dangerous factor $n^{-1}\rho_{i_r,k}^{-3}$ produces no
growth. In index form it equals $n^{1/2}(k-i_r)^{-3/2}$: the prefactor $n^{1/2}$ is exactly
compensated by $n^{-r}\cdot(n^{1/2})^{r-1}$, and the exponent $-3/2$ makes the lattice sum over the
free endpoint $i_r$ converge, so the special interval contributes only $O(1)$ and no factor
$(k-j)$, whereas each of the other $r-1$ intervals carries the exponent $-1/2$, whose accumulation
is precisely the Beta factor $(k-j)^{(r-1)/2}$ already extracted in \eqref{eq:T2-inner}.
\end{proof}

\subsection{Replacement of the frozen density}
\label{sec:replace}

\begin{lemma}
\label{lem:replace}
Under Assumptions~\ref{ass:spatial}--\ref{ass:measLip}, there exist constants $C<\infty$ and $c'>0$,
independent of $n$, such that, for all $0\le j<k\le n$ and $x,y\in\R^d$ (with $\rho=\rho_{jk}$),
\begin{equation}
p_n(j,k;x,y)=\sum_{r=0}^{k-j}\big(\tilde p\circledast K_n^{\circledast r}\big)(j,k;x,y)+\mathcal R_n(j,k;x,y),
\end{equation}
where
\begin{equation}
\abs{\mathcal R_n(j,k;x,y)}\le C\,n^{-1/2}\rho^{-1}\varphi_{c',\rho}(y-x).
\end{equation}
\end{lemma}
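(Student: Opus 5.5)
The plan is to start from the exact finite discrete expansion of Lemma~\ref{lem:param-disc},
$p_n=\sum_{r=0}^{k-j}\tilde p_n\circledast H_n^{\circledast r}$, and to replace its two ingredients one at a time. In the comparison terms, $\tilde p$ stands for its grid restriction $(a,b)\mapsto\tilde p(t_a,t_b,\cdot,\cdot)$. The two replacements are the frozen density $\tilde p_n\to\tilde p$, controlled by Lemma~\ref{lem:compare-frozen}, and the kernel $H_n\to K_n$, controlled by Lemma~\ref{lem:kernel-lattice}. For each $r\le k-j$ I would use the telescoping identity
\begin{equation*}
\tilde p_n\circledast H_n^{\circledast r}-\tilde p\circledast K_n^{\circledast r}
=(\tilde p_n-\tilde p)\circledast H_n^{\circledast r}
+\sum_{m=1}^{r}\tilde p\circledast K_n^{\circledast(m-1)}\circledast(H_n-K_n)\circledast H_n^{\circledast(r-m)},
\end{equation*}
and define $\mathcal R_n$ as the sum over $r$ of the right-hand sides. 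All Gaussian constants are unified as in the convention at the start of Section~\ref{sec:comparison}.

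\textbf{Step 1: the frozen-density term.} Take $G=\tilde p_n-\tilde p$. By Lemma~\ref{lem:compare-frozen} it satisfies $\abs{G(a,b)}\le Cn^{-1/2}\varphi_{c,\rho_{ab}}$. The kernels $F_\ell=H_n$ satisfy the required bound by Lemma~\ref{lem:disc-kernel-bounds}(i). The propagation estimate \eqref{eq:disc-iter} with $M=Cn^{-1/2}$ then gives $Cn^{-1/2}C_1^r\rho^r\Gamma^{-1}(1+\tfrac r2)\varphi_{c,\rho}$. The collapsed diagonal cells $i_1=j$, where both densities reduce to $\delta_x$, make this difference vanish there.

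\textbf{Step 2: the kernel-difference terms.} Split $H_n-K_n=D^{(1)}+D^{(2)}$ according to the two parts of \eqref{eq:lattice-bound}. Here $\abs{D^{(1)}}\le Cn^{-1/2}\rho^{-1}\varphi$ and $\abs{D^{(2)}}\le Cn^{-1}\rho^{-3}\varphi$; on the diagonal only $D^{(1)}$ is needed.

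For $D^{(1)}$, I would treat it as one of the $F_\ell$ in \eqref{eq:disc-iter}, with $C_{1,m}=Cn^{-1/2}$, taking $G=\tilde p$ (bounded via \eqref{eq:dpbound}). The $m$-th term is then bounded by $Cn^{-1/2}C_1^{r-1}\rho^r\Gamma^{-1}(1+\tfrac r2)\varphi$.

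For $D^{(2)}$, the factor $\rho^{-3}$ is not summable in the Beta scheme, so \eqref{eq:disc-iter} does not apply. Instead I would expand the chain exactly as in the proof of Lemma~\ref{lem:disc-kernel-bounds}(ii). The spatial integrations still telescope to $(\pi c)^{dr/2}\varphi_{c,\rho_{jk}}(y-x)$, since only the time prefactor of slot $m$ differs. What remains is precisely the hard-class chain sum $\mathcal H_m$ of Lemma~\ref{lem:hard-class}, which gives $Cn^{-1/2}C^{r}\rho^{r-1}\Gamma^{-1}(\tfrac{r+1}{2})\varphi$ for every $m$.

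Summing over $m$ costs a factor $r$. Summing over $r\ge1$ gives a convergent series, because $r\,C^r/\Gamma(\tfrac{r+1}{2})$ is summable; this series is bounded by $Cn^{-1/2}\rho^{0}\varphi\le Cn^{-1/2}\rho^{-1}\varphi$, using $\rho\le1$.

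\textbf{Step 3: the degenerate cells.} These are the cells with $i_1=j$ in which the leading density collapses to $\delta_x$. The generic estimates dominate them by the convention after \eqref{eq:convn}. The only new contribution is $h(H_n-K_n)(j,k)$ followed by further kernels, which is bounded by $n^{-1}\bigl[n^{-1/2}\rho^{-1}+n^{-1}\rho^{-3}\bigr]\varphi\le Cn^{-1/2}\rho^{-1}\varphi$, using $\rho\ge n^{-1/2}$. This is the source of the factor $\rho^{-1}$ in the statement.

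Collecting Steps 1--3, with $c'$ the enlarged Gaussian constant, yields $\abs{\mathcal R_n}\le Cn^{-1/2}\rho^{-1}\varphi_{c',\rho}(y-x)$. I expect the main obstacle to be Step~2 for $D^{(2)}$: the non-integrable singularity $n^{-1}\rho^{-3}$ must be routed through Lemma~\ref{lem:hard-class} while keeping the $r$-dependence summable. A secondary point is to check that the bounds for the kernels placed after the special slot ($H_n$) and before it ($K_n$) both enter with the same constant $C_1$.
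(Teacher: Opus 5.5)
Your proposal is correct and is essentially the paper's proof. It makes the same two replacements: $\tilde p_n\to\tilde p$ via Lemma~\ref{lem:compare-frozen} and \eqref{eq:disc-iter}, and $H_n\to K_n$ via Lemma~\ref{lem:kernel-lattice}, with the $n^{-1/2}\rho^{-1}$ part going into \eqref{eq:disc-iter} and the $n^{-1}\rho^{-3}$ part into Lemma~\ref{lem:hard-class}. You perform them in the opposite order, so the special slot is flanked by $K_n$ on the left and $H_n$ on the right rather than the reverse; this is immaterial because Lemma~\ref{lem:disc-kernel-bounds}(i) bounds both kernels with the same $C_1$.

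Your Step~3 is redundant rather than wrong. The cells $i_1=j$ are already covered by \eqref{eq:disc-iter} and by $\mathcal H_m$, which allows $d_0=0$. Moreover, since $\rho\ge n^{-1/2}$, the term $h(H_n-K_n)(j,k)$ is actually $O(n^{-1/2})\varphi$. So these cells do not produce the factor $\rho^{-1}$ in the statement; it comes only from the harmless weakening $1\le\rho^{-1}$.
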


\begin{proof}
Set $\mathsf{E}_n:=H_n-K_n$ and $\mathsf{D}_n:=\tilde p_n-\tilde p$, where for $a<b$ we write
$\mathsf{D}_n(a,b;x,y)=\tilde p_n(a,b;x,y)-\tilde p(t_a,t_b,x,y)$. By Lemma~\ref{lem:kernel-lattice},
\begin{equation}
\label{eq:En-bound}
\abs{\mathsf{E}_n(a,b;x,y)}\le C\,n^{-1/2}\rho_{ab}^{-1}\varphi_{c,\rho_{ab}}(y-x)
+C\,n^{-1}\rho_{ab}^{-3}\varphi_{c,\rho_{ab}}(y-x),
\end{equation}
and by the sharp frozen-density comparison \eqref{eq:compare-frozen},
\begin{equation}
\abs{\mathsf{D}_n(a,b;x,y)}\le C\,n^{-1/2}\varphi_{c,\rho_{ab}}(y-x).
\end{equation}
Starting from the discrete parametrix \eqref{eq:param-disc}, we perform two replacements.

\emph{Replacement A (kernel $H_n\to K_n$).} For $r\ge1$, telescoping
\begin{equation}
H_n^{\circledast r}-K_n^{\circledast r}=\sum_{\ell=0}^{r-1}H_n^{\circledast \ell}\circledast \mathsf{E}_n\circledast K_n^{\circledast (r-1-\ell)}
\end{equation}
gives
\begin{equation}
\tilde p_n\circledast H_n^{\circledast r}
=\tilde p_n\circledast K_n^{\circledast r}
+\sum_{\ell=0}^{r-1}\tilde p_n\circledast H_n^{\circledast \ell}\circledast \mathsf{E}_n\circledast K_n^{\circledast (r-1-\ell)}.
\end{equation}
Each summand is the convolution of the density $\tilde p_n$ (bounded by $C\varphi_{c,\rho}$) with $r$
kernel factors: $\ell$ factors $H_n$ and $r-1-\ell$ factors $K_n$ (each bounded by
$C_1\rho^{-1}\varphi$ by Lemma~\ref{lem:disc-kernel-bounds}(i)) and one factor $\mathsf{E}_n$, whose
bound \eqref{eq:En-bound} carries two terms. The first term $Cn^{-1/2}\rho^{-1}\varphi$ is of the form
$C_{1,\ell}\rho^{-1}\varphi$ with $C_{1,\ell}=Cn^{-1/2}$, so the discrete propagation estimate
\eqref{eq:disc-iter} applies and gives
\begin{equation}
\abs{\tilde p_n\circledast H_n^{\circledast \ell}\circledast \mathsf{E}_n\circledast K_n^{\circledast (r-1-\ell)}}(j,k;x,y)
\le C\,n^{-1/2}C_1^{r}\,\rho^{r}\,\Gamma^{-1}\big(1+\tfrac r2\big)\,\varphi_{c,\rho}(y-x).
\end{equation}
The second term $Cn^{-1}\rho^{-3}\varphi$ is not of this form, since its extra factor $\rho^{-2}$
refers to the \emph{local} width of the slot occupied by $\mathsf{E}_n$; but the corresponding chain
sum is exactly the ``hard class'' quantity $\mathcal H_m$ of \eqref{eq:hard-class}, and by
\eqref{eq:T2-hard-bound} (Lemma~\ref{lem:hard-class}) it is bounded by
$Cn^{-1/2}\Gamma^{-1}(\tfrac{r+1}2)\rho^{r-1}\varphi_{c,\rho}$ (the $r$-dependent constants
$\Gamma(\tfrac12)^{r-1}$ and $(\pi c)^{dr/2}$ being absorbed into the relabelled $C_1$), so that
\begin{equation}
\label{eq:En-hard}
\abs{\tilde p_n\circledast H_n^{\circledast \ell}\circledast \mathsf{E}_n\circledast K_n^{\circledast (r-1-\ell)}}(j,k;x,y)
\le C\,n^{-1/2}C_1^{r}\,\rho^{r-1}\,\Gamma^{-1}\big(\tfrac{r+1}2\big)\,\varphi_{c,\rho}(y-x)
\end{equation}
as well.

\emph{Replacement B (frozen density $\tilde p_n\to\tilde p$).} For $r\ge0$,
\begin{equation}
\tilde p_n\circledast K_n^{\circledast r}
=\tilde p\circledast K_n^{\circledast r}+\mathsf{D}_n\circledast K_n^{\circledast r}.
\end{equation}
By the discrete propagation estimate \eqref{eq:disc-iter}, applied to $G=\mathsf{D}_n$ (with $M$ replaced
by $Cn^{-1/2}$) and $F_\ell=K_n$,
\begin{equation}
\abs{\mathsf{D}_n\circledast K_n^{\circledast r}}(j,k;x,y)
\le C\,n^{-1/2}C_1^{r}\,\rho^{r}\,\Gamma^{-1}\big(1+\tfrac r2\big)\,\varphi_{c,\rho}(y-x).
\end{equation}
Applying Replacement~A to each $r\ge1$ and Replacement~B to each $r\ge0$ yields the decomposition of
the lemma, with
\begin{equation}
\mathcal R_n=\mathsf{D}_n+\sum_{r=1}^{k-j}\mathsf{D}_n\circledast K_n^{\circledast r}
+\sum_{r=1}^{k-j}\sum_{\ell=0}^{r-1}\tilde p_n\circledast H_n^{\circledast \ell}\circledast \mathsf{E}_n\circledast K_n^{\circledast (r-1-\ell)}.
\end{equation}
Each summand of the last (double) sum is bounded, by \eqref{eq:En-hard} and the display preceding
it, by $Cn^{-1/2}C_1^{r}\rho^{r-1}\Gamma^{-1}(1+\tfrac r2)\varphi_{c,\rho}$ (the first bound is
dominated by the second up to a constant, since
$\rho\,\Gamma(\tfrac{r+1}2)/\Gamma(1+\tfrac r2)=O(\rho/\sqrt r)\le C$, the factor $O(\sqrt r)=O(C^{r})$
being absorbed into a relabelled $C_1$), and there are $r$ such
summands for each $r$;
absorbing the factor $r$ (which is at most $2^r$) into $C_1$, this double sum contributes at most
\begin{equation}
C\,n^{-1/2}\sum_{r=1}^{k-j}C_1^{r}\rho^{r-1}\Gamma^{-1}\big(1+\tfrac r2\big)\varphi_{c,\rho}
\le C\,n^{-1/2}\rho^{-1}\varphi_{c,\rho}(y-x),
\end{equation}
using $\rho\le1$, the series $\sum_{r\ge1}C_1^{r}\rho^{r-1}\Gamma^{-1}(1+\tfrac r2)$ converging
uniformly in $\rho\le1$ (the summands decaying superexponentially in $r$). Together with the first term $\mathsf{D}_n$ (the case $r=0$) and the single sum
$\sum_{r=1}^{k-j}\mathsf{D}_n\circledast K_n^{\circledast r}$, both bounded by
$Cn^{-1/2}\varphi_{c,\rho}\le Cn^{-1/2}\rho^{-1}\varphi_{c,\rho}$, we obtain
\begin{equation}
\abs{\mathcal R_n(j,k;x,y)}\le C\,n^{-1/2}\rho^{-1}\varphi_{c,\rho}(y-x).
\end{equation}
Relabelling $c$ as $c'$ (enlarging it if necessary, so that the weight becomes
$\varphi_{c',\rho}$), we obtain $\abs{\mathcal R_n(j,k;x,y)}\le Cn^{-1/2}\rho^{-1}\varphi_{c',\rho}(y-x)$.
\end{proof}

\subsection{Estimation of the difference terms}
\label{sec:difference}

We decompose
$p(t_j,t_k,x,y)-p_n(j,k;x,y)$ by comparing the two parametrix expansions term by term, using
the two steps isolated in Lemma~\ref{lem:T1} (replacement of $\circledast$ by $*$) and
Lemma~\ref{lem:T2} (replacement of $H$ by $K_n$), the second after the preliminary
time-discretisation step of Lemma~\ref{lem:discr-kernel}. By the convention \eqref{eq:conv-order}
all the estimates below are used only in the range
$r\le k-j$; the complementary range
$r>k-j$ is treated separately in
Section~\ref{sec:completion}.

\begin{lemma}[Replacement of $\circledast$ by $*$]
\label{lem:T1}
Under Assumptions~\ref{ass:spatial}--\ref{ass:measLip}, there
exist constants $C<\infty$ and $c>0$ such that, for every $0\le r\le k-j$,
$0\le j<k\le n$, and $x,y\in\R^d$ (with $\rho=\rho_{jk}$),
\begin{equation}
\abs{\big(\tilde p* K_n^{\circledast r}\big)(j,k;x,y)-\big(\tilde p\circledast K_n^{\circledast r}\big)(j,k;x,y)}
\le C\,n^{-1/2}\,C_1^{r}\,\Gamma^{-1}\big(1+\tfrac r2\big)\,\varphi_{c,\rho}(y-x).
\end{equation}
Here $\tilde p* K_n^{\circledast r}$ denotes the convolution in which only the leading factor $\tilde p$
is integrated in continuous time, all the $K_n$-iterates being grid convolutions $\circledast$; this
convention is spelled out at the beginning of the proof. By the convention \eqref{eq:conv-order}
of this section the order is restricted to $0\le r\le k-j$, which is the range in which both
sides are non-trivial and the only range required below; the complementary range $r>k-j$, in
which the discrete term vanishes identically, is treated separately in Section~\ref{sec:completion}.
\end{lemma}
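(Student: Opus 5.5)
First I would make the mixed convolution precise. Writing $G:=K_n^{\circledast r}$ for the grid iterate (for $r\ge1$), I set
\[
(\tilde p* G)(j,k;x,y):=\int_{t_j}^{t_k}\dd u\int_{\R^d}\tilde p(t_j,u,x,z)\,G(\iota(u),k;z,y)\,\dd z,
\]
where $\iota(u)=\lfloor nu\rfloor$ is the grid index of the cell containing $u$. With this convention, the discrete convolution $\tilde p\circledast G$ is the same expression with $u$ replaced by the left endpoint $t_{\iota(u)}$ in the first factor. The difference therefore becomes the single cell-wise comparison
\[
\sum_{i=j}^{k-1}\int_{t_i}^{t_{i+1}}\dd u\int_{\R^d}\big[\tilde p(t_j,u,x,z)-\tilde p(t_j,t_i,x,z)\big]\,G(i,k;z,y)\,\dd z.
\]
For $r=0$ there is nothing to compare, or the claim reduces to the trivial identity. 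The estimate then amounts to controlling the time-increment of the frozen Gaussian $\tilde p$ in its forward time variable over one cell, and propagating that control through $G$ with Lemma~\ref{lem:disc-kernel-bounds}(ii).

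\textbf{Step 1: the initial cell $i=j$.} This is the cell treated separately in the statement's conventions. Here $\tilde p(t_j,t_j,\cdot)=\delta_x$, so the discrete term is $h\,G(j,k;x,y)$. The continuous term is $\int_{t_j}^{t_{j+1}}\int\tilde p(t_j,u,x,z)G(j,k;z,y)$. I would bound each term separately rather than their difference. By Lemma~\ref{lem:disc-kernel-bounds}(ii) applied to the chain starting at $j$, which begins with a kernel factor $\rho^{-1}$, together with Lemma~\ref{lem:kernbound}, both are at most
\[
Ch\,\rho_{jk}^{-1}C_1^r\rho_{jk}^{r-1}\Gamma^{-1}\big(\tfrac{r+1}{2}\big)\varphi_{c,\rho}.
\]
More carefully, $\|G(j,k)\|\le C_1^r\rho^{r-1}\Gamma^{-1}(\tfrac{r+1}2)\varphi$, since $G$ is itself an $r$-fold kernel chain. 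Since $h\le n^{-1/2}\rho_{jk}$, the product is $\le Cn^{-1/2}C_1^r\rho^{r}\Gamma^{-1}(\tfrac{r+1}{2})\varphi$. After relabelling $C_1$, this is $\le Cn^{-1/2}C_1^r\Gamma^{-1}(1+\tfrac r2)\varphi$, because the ratio of the two Gamma values is $O(\sqrt r)$.

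\textbf{Step 2: cells $i>j$.} For $u\in[t_i,t_{i+1}]$, I would write
\[
\tilde p(t_j,u)-\tilde p(t_j,t_i)=\int_{t_i}^{u}\partial_v\tilde p(t_j,v,x,z)\,\dd v.
\]
Differentiating the Gaussian \eqref{eq:frozen-cont} in its forward time, using $\partial_v\bar m=b(v,z,\mu_v)$ and $\partial_v\bar\Sigma=\Sigma(v,z,\mu_v)$, gives the bound
\[
|\partial_v\tilde p(t_j,v,x,z)|\le C(v-t_j)^{-1}\varphi_{c,\sqrt{v-t_j}}(z-x).
\]
The $z$-dependence of the frozen point $y=z$ also needs handling. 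The chain rule brings in $\partial_z\bar m$ and $\partial_z\bar\Sigma$, which are bounded by Assumption~\ref{ass:spatial}, but I would take this as an assumption here; the argument should need only forward-time regularity in $v$. Consequently the bracket is bounded by
\[
Ch\,\rho_{ji}^{-2}\varphi_{c,\rho_{ji}}(z-x),
\]
after using $\rho(t_j,v)\asymp\rho_{ji}$ for $i>j$. Interpolating with the trivial bound $C\varphi$ gives
\[
\big|\tilde p(t_j,u)-\tilde p(t_j,t_i)\big|\le Cn^{-1/2}\rho_{ji}^{-1}\varphi_{c,\rho_{ji}},
\]
since $\min(1,h\rho^{-2})\le n^{-1/2}\rho^{-1}$. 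The cells $i>j$ then contribute
\[
\sum_{i=j+1}^{k-1}h\int\big[Cn^{-1/2}\rho_{ji}^{-1}\varphi_{c,\rho_{ji}}\big]\,G(i,k),
\]
which is a chain in which the leading factor is itself of kernel type $\rho^{-1}$ with constant $Cn^{-1/2}$. Lemma~\ref{lem:disc-kernel-bounds}(ii) then applies with $G$ replaced by a kernel-type leading factor. I would reduce to the existing estimate by regarding the leading factor as $F_0$ preceded by a $\delta$ (a collapsed density, as in the chain conventions after \eqref{eq:convn}). This gives
\[
Cn^{-1/2}C_1^{r}\rho^{r}\Gamma^{-1}\big(1+\tfrac{r+1}2\big)\varphi\le Cn^{-1/2}C_1^r\Gamma^{-1}\big(1+\tfrac r2\big)\varphi,
\]
using $\rho\le1$.

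The main obstacle is Step 2's time-derivative bound: the loss $h\rho_{ji}^{-2}$ is not integrable near $i=j$ if used at full strength. That is exactly why I interpolate down to $n^{-1/2}\rho^{-1}$ and treat the first cell separately. The first cell is also where the chain conventions (a collapsed $\delta$) must be used carefully so that no spurious $\rho^{-1}$ survives.
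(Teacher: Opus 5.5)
Your argument is correct and follows the same skeleton as the paper's. Your mixed convolution coincides with the paper's $(\tilde p*K_n)\circledast K_n^{\circledast(r-1)}$, by associativity of $\circledast$ and $K_n(i,i)=0$. The difference is then the same cell-error chain $\varepsilon\circledast K_n^{\circledast r}$, and the initial cell is split off in the same way. You differ genuinely only in the two local estimates:
\begin{itemize}
\item \emph{Initial cell.} The paper Taylor-expands $K_n(j,k;\cdot,y)$ about $x$. Your bound of each term by $h\abs{G(j,k)}$ already suffices, because $h\rho_{jk}^{-1}\le n^{-1/2}$.
\item \emph{Bulk cells.} The paper inserts $\partial_t\tilde p=-b\cdot\partial_y\tilde p+\tfrac12\Sigma:\partial_y^2\tilde p$ and integrates the diffusion part by parts onto $\Sigma K_n$. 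This costs bounds on $\partial_zK_n$ (hence third derivatives of $\tilde p_n$) and the lattice sum $\sum n^{-2}\rho_{ji}^{-1}\rho_{ik}^{-2}\le Cn^{-1/2}$. You use only the forward-time derivative at fixed $z$, $\abs{\partial_v\tilde p}\le C(v-t_j)^{-1}\varphi_{c,\sqrt{v-t_j}}$, together with the trade $h\rho_{ji}^{-2}\le n^{-1/2}\rho_{ji}^{-1}$. That trade is valid because $\rho_{ji}\ge n^{-1/2}$ on every bulk cell, so the cell error is itself a kernel-type factor with constant $n^{-1/2}$.
\end{itemize}
Your route is shorter. It also never has to account for the $z$-dependence of the frozen parameters $\bar m(t_j,\tau,z)$, $\bar\Sigma(t_j,\tau,z)$ under integration by parts. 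In \eqref{eq:T1-ibp}, $\partial_z$ is effectively taken to act only on the Gaussian's evaluation variable; the omitted terms are lower order, but your argument never meets them.

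Two bookkeeping points need fixing; neither changes the conclusion.
\begin{itemize}
\item \emph{Exponents.} $K_n^{\circledast r}(j,k)$ is a chain of $r$ kernels with only $r-1$ free indices. It is therefore bounded by $C_1^r\rho_{jk}^{r-2}\Gamma^{-1}(\tfrac r2)\varphi$, not by $C_1^r\rho_{jk}^{r-1}\Gamma^{-1}(\tfrac{r+1}2)\varphi$; for $r=1$ this is just $\abs{K_n}\le C_1\rho^{-1}\varphi$. The initial cell then gives $h\rho_{jk}^{r-2}\le n^{-1/2}\rho_{jk}^{r-1}\le n^{-1/2}$ for $r\ge1$, which is still enough. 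Likewise, the bulk chain (a kernel-type leading factor followed by $r$ kernels, with $r$ free indices) carries $\rho^{r-1}\Gamma^{-1}(\tfrac{r+1}2)$, not $\rho^{r}\Gamma^{-1}(1+\tfrac{r+1}2)$.
\item \emph{The collapsed $\delta$.} Reading the leading factor as ``$F_0$ preceded by a collapsed $\delta$'' is not literally an instance of Lemma~\ref{lem:disc-kernel-bounds}(ii). That lemma needs a density-type leading factor, and the collapsed term carries one more factor $h$ than your sum. The clean fix is the paper's Step~3:
  \begin{enumerate}
  \item Run your two estimates with $r=1$ at every pair $a<b$, using $\sum_{a<i<b}h\rho_{ai}^{-1}\rho_{ib}^{-1}\le\pi$.
  \item This gives $\abs{\varepsilon\circledast K_n}(a,b;x,y)\le Cn^{-1/2}\varphi_{c,\rho_{ab}}(y-x)$.
  \item Apply \eqref{eq:disc-iter} with $G=\varepsilon\circledast K_n$, $M=Cn^{-1/2}$, and $F_\ell=K_n$ for the remaining $r-1$ factors.
  \end{enumerate}
\end{itemize}
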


\begin{proof}
Throughout, $C$ changes from line to line and is independent of $n,j,k,r,x,y$; we abbreviate
$\varphi:=\varphi_{c,\rho}(y-x)$ with $c$ possibly enlarged from line to line, and recall
$\rho=\rho_{jk}$.

\emph{The convention and the cell discrepancy.}
Recall that $\tilde p*K_n^{\circledast r}$ means that only the leading factor $\tilde p$ is integrated in
continuous time while all the $K_n$-factors are grid convolutions; we make this precise by
\begin{equation}
\tilde p*K_n^{\circledast r}:=\big(\tilde p*K_n\big)\circledast K_n^{\circledast (r-1)}\qquad(r\ge1),
\end{equation}
whereas for $r=0$ both sides reduce to $\tilde p$. The two convolutions differ only through the
leading factor $\tilde p$, and this discrepancy is carried by the cell error
\begin{equation}
\varepsilon(j,i;x,z):=n\int_{t_i}^{t_{i+1}}\tilde p(t_j,u,x,z)\dd u-\tilde p(t_j,t_i,x,z),
\qquad j\le i\le k-1 .
\end{equation}
Indeed $K_n(u,k;z,y)=K_n(i,k;z,y)$ for $u\in[t_i,t_{i+1})$, so replacing $\tilde p$ by $\varepsilon$
inside the grid sum gives, at the level of the leading factor,
\begin{equation}
\begin{aligned}
\big(\tilde p*K_n\big)(j,k;x,y)-\big(\tilde p\circledast K_n\big)(j,k;x,y)
&=\sum_{i=j}^{k-1}n^{-1}\int_{\R^d}\varepsilon(j,i;x,z)K_n(i,k;z,y)\dd z\\
&=\big(\varepsilon\circledast K_n\big)(j,k;x,y),
\end{aligned}
\end{equation}
and convolving both sides with $K_n^{\circledast (r-1)}$ yields the exact identity
\begin{equation}
\label{eq:delta-identity}
\begin{aligned}
\tilde p*K_n^{\circledast r}-\tilde p\circledast K_n^{\circledast r}
&=\varepsilon\circledast K_n^{\circledast r}\\
&=\big(\varepsilon\circledast K_n\big)\circledast K_n^{\circledast (r-1)},\qquad r\ge1 .
\end{aligned}
\end{equation}

\emph{The time derivative of the frozen density.} Writing
$\tilde p(s,t,x,y)=g(\bar m,\bar\Sigma;y-x)$ with $\bar m=\int_s^t b$, $\bar\Sigma=\int_s^t\Sigma$,
and using $\partial_{\bar m}g=-\partial_y g$, $\partial_{\bar\Sigma}g=\tfrac12\partial_y^2 g$,
\begin{equation}
\label{eq:dtp}
\partial_t\tilde p(s,t,x,y)=-b(t,y,\mu_t)\cdot\partial_y\tilde p(s,t,x,y)
+\tfrac12\Sigma(t,y,\mu_t):\partial_y^2\tilde p(s,t,x,y).
\end{equation}
This is the only property of $\tilde p$ used below.

\textbf{Step 1: the case $r=1$, the initial cell $i=j$.}
Here $\tilde p(t_j,t_j,x,\cdot)=\delta_x$. Setting
$\mathcal I(u):=\int_{\R^d}\tilde p(t_j,u,x,z)K_n(j,k;z,y)\dd z$ and using
$\tilde p(t_j,t_j,x,\cdot)=\delta_x$ together with $\int_{t_j}^{t_{j+1}}\dd u=n^{-1}$,
\begin{equation}
n^{-1}\int_{\R^d}\varepsilon(j,j;x,z)K_n(j,k;z,y)\dd z
=\int_{t_j}^{t_{j+1}}\big[\mathcal I(u)-\mathcal I(t_j)\big]\dd u .
\end{equation}
The frozen density $\tilde p(t_j,u,x,\cdot)$ is centred at $x+O(u-t_j)$ with covariance of order
$u-t_j\le n^{-1}$; expanding $K_n(j,k;\cdot,y)$ about $x$ therefore gives
\begin{equation}
\mathcal I(u)-\mathcal I(t_j)=O\big((u-t_j)\big[\rho^{-2}+\rho^{-3}\big]\varphi\big).
\end{equation}
Integrating this over the cell of length $n^{-1}$ and using $\rho\ge n^{-1/2}$,
\begin{equation}
\Big|n^{-1}\int_{\R^d}\varepsilon(j,j;x,z)K_n(j,k;z,y)\dd z\Big|
\le Cn^{-2}\rho^{-3}\varphi\le Cn^{-1/2}\varphi .
\end{equation}

\textbf{Step 2: the case $r=1$, the bulk cells $i\ge j+1$.}
By the fundamental theorem of calculus and \eqref{eq:dtp},
\begin{equation}
\label{eq:delta-bulk}
\varepsilon(j,i;x,z)=n\int_{t_i}^{t_{i+1}}\int_{t_i}^{u}
\big[-b\cdot\partial_z\tilde p+\tfrac12\Sigma:\partial_z^2\tilde p\big](t_j,\tau,x,z)\dd\tau\dd u,
\end{equation}
the bracket being evaluated at $(t_j,\tau,x,z)$, with $\partial_z$ acting on the second spatial
variable. We bound the two summands of \eqref{eq:delta-bulk} after convolution with $K_n$; in both
cases the double time integral contributes $\int_{t_i}^{t_{i+1}}\int_{t_i}^{u}\dd\tau\dd u=\tfrac12n^{-2}$.

\emph{The drift summand.} Here we use the two elementary bounds
\begin{equation}
\label{eq:T1-bounds}
\abs{\partial_z\tilde p(t_j,\tau,x,\cdot)}\le C\rho(t_j,\tau)^{-1}\varphi_{c,\rho(t_j,\tau)}(\cdot-x),
\qquad
\abs{K_n(i,k;z,y)}\le C_1\rho_{ik}^{-1}\varphi_{c,\rho_{ik}}(y-z),
\end{equation}
which are Lemma~\ref{lem:kernel}(iv) and Lemma~\ref{lem:disc-kernel-bounds}(i). Convolving them with
the bound of Lemma~\ref{lem:kernel}(i), and using $\rho(t_j,\tau)\ge\rho_{ji}$ for $\tau\ge t_i$ and
the double time integral $\tfrac12n^{-2}$,
\begin{equation}
\label{eq:T1-drift}
\Big|\big(\varepsilon^{\mathrm{drift}}\circledast K_n\big)(j,k;x,y)\Big|
\le C_1\varphi\sum_{i=j+1}^{k-1}n^{-2}\rho_{ji}^{-1}\rho_{ik}^{-1}
\le C_1n^{-1}\varphi,
\end{equation}
the last inequality because, inserting $\rho_{ji}=((i-j)/n)^{1/2}$ and
$\rho_{ik}=((k-i)/n)^{1/2}$, the sum equals
$n^{-1}\sum_{i}(i-j)^{-1/2}(k-i)^{-1/2}\le n^{-1}\pi$ (Lemma~\ref{lem:kernel}(iii)).

\emph{The diffusion summand.} Integrating by parts once in $z$ (the boundary terms vanish by the
Gaussian decay of $\tilde p$ and $K_n$) moves one of the two derivatives $\partial_z$ from
$\tilde p$ onto $\Sigma K_n$:
\begin{multline}
\label{eq:T1-ibp}
\int_{\R^d}\tfrac12\Sigma(\tau,z,\mu_\tau):\partial_z^2\tilde p(t_j,\tau,x,z)\,K_n(i,k;z,y)\dd z\\
=-\tfrac12\int_{\R^d}\sum_{l,m}\partial_{z_l}\tilde p(t_j,\tau,x,z)\,
\partial_{z_m}\big(\Sigma_{lm}(\tau,z,\mu_\tau)K_n(i,k;z,y)\big)\dd z .
\end{multline}
The first derivatives of $K_n$, obtained from \eqref{eq:Kndef} exactly as in
Lemma~\ref{lem:disc-kernel-bounds}(i) and using only the bounded first-order derivatives of $b$ and
$\Sigma$, obey $\abs{\partial_z K_n(i,k;z,y)}\le C\big[\rho_{ik}^{-1}+\rho_{ik}^{-2}\big]\varphi_{c,\rho_{ik}}(y-z)$;
since $\Sigma$ and $\partial_z\Sigma$ are bounded, this gives
\begin{equation}
\label{eq:T1-d2K}
\Big|\sum_{l,m}\partial_{z_m}\big(\Sigma_{lm}K_n\big)(i,k;z,y)\Big|
\le C\big[\rho_{ik}^{-1}+\rho_{ik}^{-2}\big]\varphi_{c,\rho_{ik}}(y-z).
\end{equation}
Convolving the right-hand side of \eqref{eq:T1-d2K} with the derivative weight of $\tilde p$ from
\eqref{eq:T1-bounds}, and inserting the double time integral
$\tfrac12n^{-2}$ together with $\rho(t_j,\tau)\ge\rho_{ji}$,
\begin{equation}
\label{eq:T1-diff}
\Big|\big(\varepsilon^{\mathrm{diff}}\circledast K_n\big)(j,k;x,y)\Big|
\le C_1\varphi\sum_{i=j+1}^{k-1}n^{-2}\rho_{ji}^{-1}\big[\rho_{ik}^{-1}+\rho_{ik}^{-2}\big].
\end{equation}
The two sums are
\begin{equation}
\label{eq:T1-sums}
\sum_{i=j+1}^{k-1}n^{-2}\rho_{ji}^{-1}\rho_{ik}^{-1}\le Cn^{-1},\qquad
\sum_{i=j+1}^{k-1}n^{-2}\rho_{ji}^{-1}\rho_{ik}^{-2}\le Cn^{-1/2},
\end{equation}
the first being the sum already estimated in \eqref{eq:T1-drift}, and the second following
because, inserting $\rho_{ji}^{-1}=n^{1/2}(i-j)^{-1/2}$ and $\rho_{ik}^{-2}=n(k-i)^{-1}$, it becomes
$n^{-1/2}\sum_{i=j+1}^{k-1}(i-j)^{-1/2}(k-i)^{-1}\le Cn^{-1/2}$, the lattice sum
$\sum_{s=1}^{k-j-1}s^{-1/2}(k-j-s)^{-1}$ being bounded by $C(k-j)^{-1/2}\log(k-j)\le C$. Hence
\eqref{eq:T1-diff} is bounded by $C_1n^{-1/2}\varphi$. Adding the drift bound \eqref{eq:T1-drift}
(which contributes $C_1n^{-1}\varphi$) and the initial cell of Step~1 (which contributes at most
$Cn^{-1/2}\varphi$), the total is $C_1n^{-1/2}\varphi$:
\begin{equation}
\label{eq:r1}
\abs{\big(\tilde p*K_n-\tilde p\circledast K_n\big)(j,k;x,y)}\le C_1n^{-1/2}\varphi_{c,\rho}(y-x),
\end{equation}
and since every estimate above was carried out for arbitrary $0\le j<k\le n$, the same bound holds
for the density $\varepsilon\circledast K_n$ at all pairs $(a,b)$, $0\le a<b\le n$.

\textbf{Step 3: the general case $r\ge1$, and conclusion.}
By \eqref{eq:delta-identity} the difference equals
$\varepsilon\circledast K_n^{\circledast r}=(\varepsilon\circledast K_n)\circledast K_n^{\circledast (r-1)}$; by \eqref{eq:r1} the density
$\varepsilon\circledast K_n$ satisfies the hypothesis of Lemma~\ref{lem:disc-kernel-bounds}(ii) with
$M=C_1n^{-1/2}$, while each of the remaining $r-1$ factors is $K_n$, i.e.\ $F_\ell=K_n$ with the
same constant $C_1$. Applying the discrete propagation estimate \eqref{eq:disc-iter},
\begin{equation}
\abs{\varepsilon\circledast K_n^{\circledast r}}(j,k;x,y)
\le Cn^{-1/2}C_1^{r}\rho^{r-1}\Gamma^{-1}\big(1+\tfrac{r-1}{2}\big)\varphi_{c,\rho}(y-x),
\end{equation}
and since $\rho\le1$ and $\Gamma(1+\tfrac r2)\le C^r\Gamma(1+\tfrac{r-1}{2})$ (absorbing the factor
$C^r$ into a relabelled $C_1$),
\begin{equation}
\abs{\varepsilon\circledast K_n^{\circledast r}}(j,k;x,y)
\le Cn^{-1/2}C_1^{r}\Gamma^{-1}\big(1+\tfrac r2\big)\varphi_{c,\rho}(y-x).
\end{equation}
For $r=0$ both sides equal $\tilde p$. Together with \eqref{eq:delta-identity} this proves the
lemma.
\end{proof}

\begin{lemma}[Time-discretisation of the continuous kernel]
\label{lem:discr-kernel}
Under Assumptions~\ref{ass:spatial}--\ref{ass:measLip},
let $H^{\mathrm{g}}_n$ denote the grid embedding of $H$, i.e.\ $H^{\mathrm{g}}_n(s,t,x,y)=H(t_i,t_{i'},x,y)$ for
$s\in[t_i,t_{i+1})$, $t\in[t_{i'},t_{i'+1})$. There exist constants $C<\infty$ and $c>0$
such that, for all $0\le j<k\le n$, $1\le r\le k-j$ and $x,y\in\R^d$,
\begin{equation}
\abs{\tilde p* H^{*r}-\tilde p*H^{\mathrm{g}}_n{}^{\circledast r}}(j,k;x,y)
\le C\,n^{-1/2}\,C_1^{r}\,\Gamma^{-1}\big(1+\tfrac r2\big)\,\varphi_{c,\rho_{jk}}(y-x).
\end{equation}
For $r>k-j$, a range excluded by the convention \eqref{eq:conv-order} of this section, the
embedded kernel $H^{\mathrm{g}}_n{}^{\circledast r}$ (and with it $\tilde p*H^{\mathrm{g}}_n{}^{\circledast r}$) vanishes,
because the piecewise-constant embedding vanishes whenever two consecutive time variables fall in
the same cell, so that $r$ strictly increasing cells would be needed inside $[t_j,t_k]$; the
difference then reduces to the pure continuous term $\tilde p* H^{*r}$, which is controlled
directly by \eqref{eq:bound-r} and does not enter the estimate below.
\end{lemma}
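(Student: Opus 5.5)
The plan is to telescope the $r$ kernel factors one at a time, following the pattern of Replacement~A in Lemma~\ref{lem:replace}:
\begin{equation*}
\tilde p*H^{*r}-\tilde p*H^{\mathrm{g}}_n{}^{\circledast r}
=\sum_{\ell=1}^{r}\tilde p*H^{\mathrm g}_n{}^{\circledast(\ell-1)}*\big(H-H^{\mathrm g}_n\big)*H^{*(r-\ell)} .
\end{equation*}
Here the leading $\tilde p$ is integrated in continuous time, as in the convention of Lemma~\ref{lem:T1}. The factors to the left of the slot are grid embeddings, and the factors to the right are still continuous. Since $H^{\mathrm g}_n$ is constant on cells, grid and continuous convolutions against it coincide up to this convention. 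Each summand is therefore a mixed chain with $r$ kernel slots and exactly one ``defect'' slot.

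\textbf{Step 1: the local defect.} Fix $u\in[t_i,t_{i+1})$ and $v\in[t_{i'},t_{i'+1})$ with $i<i'$. I would bound $H(u,v,\cdot)-H(t_i,t_{i'},\cdot)$ by the fundamental theorem of calculus in each time argument. The time derivative of $\tilde p$ is given by \eqref{eq:dtp}, and there is a corresponding formula in the initial time, namely $\partial_s\tilde p=-\tilde L^\mu\tilde p$. The coefficient differences in \eqref{eq:kernelH} vary in $s$ by $O(\norm{x-y}\,n^{-1/2})$. This follows from Assumption~\ref{ass:time}, Assumption~\ref{ass:measLip} and the $\tfrac12$-H\"older continuity of the flow (Remark~\ref{rem:autotime}), using the mixed derivatives $\partial_t\partial_x$ together with the Lipschitz property in the measure argument for $\partial_x$. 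Combining this with Lemma~\ref{lem:kernel}(ii),(iv), I expect the bound
\begin{equation*}
\abs{H(u,v,x,y)-H(t_i,t_{i'},x,y)}\le C\big[n^{-1/2}\rho^{-1}+n^{-1}\rho^{-3}\big]\varphi_{c,\rho}(y-x),\qquad \rho=\rho(t_i,t_{i'}).
\end{equation*}
This has exactly the form of the bound \eqref{eq:En-bound} on $\mathsf E_n$. The reason is that a time shift of size $n^{-1}$ applied to $\nabla^2\tilde p$ costs an additional $\rho^{-2}$.

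\textbf{Step 2: near-diagonal cells.} The cells with $i=i'$ need separate treatment. On these cells $H^{\mathrm g}_n$ vanishes, so the defect equals $H$ itself, with $\abs{H}\le C_1(v-u)^{-1/2}\varphi$. The integral of this bound over a single cell is $O(n^{-1/2})$ times the Gaussian. So these cells can be handled like the degenerate terms described after \eqref{eq:convn}: they produce a factor $n^{-1/2}$ and one fewer width.

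\textbf{Step 3: summation and the main obstacle.} Each summand is then estimated using the continuous estimate \eqref{eq:bound-r} for the continuous tail and the discrete propagation estimate \eqref{eq:disc-iter} for the grid part, with the defect slot carrying constant $Cn^{-1/2}$ for the $n^{-1/2}\rho^{-1}$ piece. This part is routine. The main obstacle is the $n^{-1}\rho^{-3}$ piece, which is not integrable in a naive sense. I would handle it as the hard class of Lemma~\ref{lem:hard-class}, or its continuous analogue $\int(v-u)^{-3/2}\,\dd v$ cut off at $v-u\ge n^{-1}$. That integral yields $n^{1/2}$, which the prefactor $n^{-1}$ reduces to $n^{-1/2}$, at the cost of one Beta factor. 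The result is the bound $Cn^{-1/2}C_1^r\rho^{r-1}\Gamma^{-1}(\tfrac{r+1}2)\varphi$.

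Finally, summing over the $r$ choices of slot $\ell$, absorbing the factor $r\le 2^r$ into $C_1$, and using $\Gamma(1+\tfrac r2)\le C^r\Gamma(\tfrac{r+1}2)$ as in Lemma~\ref{lem:T1}, Step~3, should give the claimed bound.
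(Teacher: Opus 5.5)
Your plan is essentially the paper's proof. The telescoping over the $r$ kernel slots is the same; you put the grid factors to the left of the defect and the paper puts them to the right, which is immaterial. The split of $H(u,v)-H(t_i,t_{i'})$ is also the same: a terminal-time part controlled by $\partial_t H$, and an initial-time part controlled by the $O(n^{-1/2}\norm{x-y})$ coefficient increments together with $\partial_s\tilde p=-\tilde L^\mu\tilde p$. The $n^{-1}\rho^{-3}$ piece is tamed by the same mechanism, namely that the exponent $-3/2$ is summable: this is the paper's $\sum_{i<i'}n^{-3}\rho_{i,i'}^{-3}\le Cn^{-3/2}(k-j)$ and your hard-class computation. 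Your Step~2 also covers the cells $i=i'$, which the paper's sums over $i<i'$ simply omit.

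One point needs correcting. The pointwise defect bound of Step~1, with the fixed weight $\varphi_{c,\rho(t_i,t_{i'})}(y-x)$, is false for adjacent cells $i'=i+1$. There $v-u$ can be arbitrarily small while $\rho_{i,i+1}=n^{-1/2}$. When $\norm{x-y}\approx\sqrt{v-u}$, the kernel $H(u,v,x,y)$ is of size $(v-u)^{-(1+d)/2}$, which exceeds your right-hand side, of size about $n^{(1+d)/2}$, once $v-u\ll n^{-1}$. The paper's own cell estimate has the same blind spot. The bound is fine for $i'\ge i+2$, where $v-u$ and $t_{i'}-t_i$ are comparable. The fix is to treat $i'=i+1$ together with $i'=i$ in Step~2. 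Bound the defect by $\abs{H(u,v)}+\abs{H(t_i,t_{i+1})}$, keep the actual width $\sqrt{v-u}$ inside the spatial convolution (the resulting $O(n^{-1})$ width mismatch in the chain is absorbed by enlarging $c$), and integrate over one cell. This gives the same $O(n^{-1/2})$ gain, and the rest of your argument then goes through.
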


\begin{proof}
Write $H=\tfrac12 \mathsf{A}:\partial_x^2\tilde p+\mathsf{B}\cdot\partial_x\tilde p$, where $\mathsf{A}=(\mathsf{A}_{ij})$ with
$\mathsf{A}_{ij}=\Sigma_{ij}(s,x,\mu_s)-\Sigma_{ij}(s,y,\mu_s)$ and $\mathsf{B}=(\mathsf{B}_i)$ with
$\mathsf{B}_i=b_i(s,x,\mu_s)-b_i(s,y,\mu_s)$ (both of size $O(\norm{x-y})$). In the
terminal variable, the coefficients are independent of $t$ while $\tilde p=g(\bar m,\bar\Sigma;y-x)$
is differentiable in $t$ (with $\partial_t\tilde p=-b\cdot\partial_y\tilde p
+\tfrac12\Sigma:\partial_y^2\tilde p$), so, by Lemma~\ref{lem:kernel}(iv),
\begin{equation}
\label{eq:Hdt}
\abs{\partial_t H(s,t,x,y)}\le C\big[\rho^{-1}+\rho^{-2}+\rho^{-3}\big]\varphi_{c,\rho}(y-x),
\qquad \rho=\rho(s,t).
\end{equation}
In the initial variable the coefficients depend on $s$ only through $\mu_s$, which is
$\tfrac12$-H\"older in $\mathcal{W}_2$ (not differentiable); we use the increment bound (mean-value theorem in
$x$ and Remark~\ref{rem:autotime})
\begin{equation}
\label{eq:Hinc}
\norm{\mathsf{A}(u,x,y)-\mathsf{A}(s,x,y)}+\norm{\mathsf{B}(u,x,y)-\mathsf{B}(s,x,y)}\le C\,n^{-1/2}\norm{x-y},\qquad \abs{u-s}\le n^{-1},
\end{equation}
whereas $\tilde p$ is differentiable in $s$ with
$\abs{\partial_s\partial_x^m\tilde p}\le C[\rho^{-(m+1)}+\rho^{-(m+2)}]\varphi$, $m=1,2$.

Both convolutions share the same leading factor $\tilde p$ and the same kernel factors, and differ
only in whether the intermediate time pairs are continuous or frozen to the grid. Telescoping over
the $r$ kernel positions,
\begin{equation}
\tilde p* H^{*r}-\tilde p*H^{\mathrm{g}}_n{}^{\circledast r}
=\sum_{\ell=1}^{r}\tilde p* H^{*(\ell-1)}*\big[H-H^{\mathrm{g}}_n\big]*H^{\mathrm{g}}_n{}^{\circledast (r-\ell)}.
\end{equation}
For $u\in[t_i,t_{i+1})$, $v\in[t_{i'},t_{i'+1})$ we split
\begin{equation}
H(u,v)-H(t_i,t_{i'})=\big[H(t_i,v)-H(t_i,t_{i'})\big]+\big[H(u,v)-H(t_i,v)\big].
\end{equation}
The first bracket is bounded by $Ch[\rho_{i,i'}^{-1}+\rho_{i,i'}^{-2}+\rho_{i,i'}^{-3}]
\varphi_{c,\rho_{i,i'}}$ by \eqref{eq:Hdt}; the second splits into a coefficient part, bounded by
$Cn^{-1/2}\rho(u,v)^{-1}\varphi_{c,\rho(u,v)}$ (by \eqref{eq:Hinc} and the weight bound
$\norm{x-y}\varphi\le C\rho\varphi$), and a frozen-density part, bounded by
$Ch[\rho_{i,i'}^{-1}+\rho_{i,i'}^{-2}+\rho_{i,i'}^{-3}]\varphi_{c,\rho_{i,i'}}$ (fundamental theorem
in $s$ and the $\partial_s\partial_x^m\tilde p$ bound). Integrating over the two cells (each of length
$n^{-1}$), the cell-integrated discrepancy is bounded by
\begin{equation}
C n^{-5/2}\rho_{i,i'}^{-1}\varphi_{c,\rho_{i,i'}}(y-x)
+C n^{-3}\big[\rho_{i,i'}^{-1}+\rho_{i,i'}^{-2}+\rho_{i,i'}^{-3}\big]\varphi_{c,\rho_{i,i'}}(y-x).
\end{equation}
Inserting this into the $\ell$-th term of the telescoping and applying the continuous iteration
bound of Lemma~\ref{lem:kernbound}, it remains to sum over the grid. Writing
$\rho_{i,i'}^2=(i'-i)n^{-1}$ (all the sums below run over the pairs $j\le i<i'\le k$),
\begin{equation}
\sum_{i<i'}n^{-5/2}\rho_{i,i'}^{-1}\le Cn^{-1/2},\qquad
\sum_{i<i'}n^{-3}\rho_{i,i'}^{-1}\le Cn^{-1},
\end{equation}
\begin{equation}
\sum_{i<i'}n^{-3}\rho_{i,i'}^{-2}\le Cn^{-1}\log n\le Cn^{-1/2},\qquad
\sum_{i<i'}n^{-3}\rho_{i,i'}^{-3}\le Cn^{-3/2}(k-j)\le Cn^{-1/2}.
\end{equation}
Summing over the $r$ positions (the factor $r$ absorbed into $C_1$) and using $\rho_{jk}\le1$ together
with $\Gamma^{-1}(1+\tfrac{r-1}{2})\le C^r\Gamma^{-1}(1+\tfrac r2)$, we obtain the claim.
\end{proof}

\begin{lemma}[Replacement of $H$ by $K_n$]
\label{lem:T2}
Under Assumptions~\ref{ass:spatial}--\ref{ass:measLip}, there exist
constants $C<\infty$ and $c>0$ such that, for all $0\le j<k\le n$ and $0\le r\le k-j$,
\begin{equation}
\label{eq:T2-bound}
\abs{\tilde p* H^{*r}-\tilde p* K_n^{\circledast r}}(j,k;x,y)
\le C\,n^{-1/2}\,C_1^{r}\,\Gamma^{-1}\big(1+\tfrac r2\big)\,\varphi_{c,\rho_{jk}}(y-x).
\end{equation}
By the convention \eqref{eq:conv-order} of this section the order is restricted to
$0\le r\le k-j$. For $r>k-j$ the grid object $\tilde p\circledast K_n^{\circledast r}$ vanishes, so the left-hand
side reduces to the continuous term $\tilde p* H^{*r}$; this tail is estimated separately through
\eqref{eq:bound-r} (see Section~\ref{sec:completion}).
\end{lemma}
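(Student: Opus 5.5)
The natural route is the intermediate object of Lemma~\ref{lem:discr-kernel}. Split
\begin{equation*}
\tilde p* H^{*r}-\tilde p* K_n^{\circledast r}
=\big[\tilde p* H^{*r}-\tilde p*H^{\mathrm{g}}_n{}^{\circledast r}\big]
+\big[\tilde p*H^{\mathrm{g}}_n{}^{\circledast r}-\tilde p* K_n^{\circledast r}\big].
\end{equation*}
For $r=0$ both sides equal $\tilde p$, so take $1\le r\le k-j$. The first bracket is bounded directly by Lemma~\ref{lem:discr-kernel}, with the required rate $Cn^{-1/2}C_1^r\Gamma^{-1}(1+\tfrac r2)\varphi_{c,\rho_{jk}}$. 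The work therefore lies in the second bracket. There the leading factor and all chain structures agree, and only the grid kernels $H(t_i,t_{i'})$ and $K_n(i,i')$ differ.

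For the second bracket, telescope over the $r$ kernel slots:
\begin{equation*}
H^{\mathrm{g}}_n{}^{\circledast r}-K_n^{\circledast r}=\sum_{\ell=1}^{r}H^{\mathrm{g}}_n{}^{\circledast(\ell-1)}\circledast\big(H^{\mathrm{g}}_n-K_n\big)\circledast K_n^{\circledast(r-\ell)}.
\end{equation*}
Each summand is then preceded by the leading factor $\tilde p*$ in the sense of Lemma~\ref{lem:T1}. Corollary~\ref{cor:approx-kernel} bounds the difference factor by
\begin{equation*}
Cn^{-1/2}\rho^{-1}\varphi+Cn^{-1}\rho^{-3}\varphi,
\end{equation*}
measured in the local width of its slot. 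The remaining factors are bounded by $C_1\rho^{-1}\varphi$: for $K_n$ by Lemma~\ref{lem:disc-kernel-bounds}(i), and for the grid-evaluated $H$ by \eqref{eq:boundH}. Since $H^{\mathrm{g}}_n$ vanishes on the diagonal, the chain convention applies.

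The leading density $\tilde p*(\cdot)$ is handled in two ways:
\begin{enumerate}[label=(\alph*)]
\item Fold the continuous integral into a density-type $G$ obeying $|G|\le M\varphi_{c,\rho}$, using Lemma~\ref{lem:kernel}(i) cellwise.
\item Alternatively, invoke Lemma~\ref{lem:T1}, which shows that the difference between $\tilde p*$ and $\tilde p\circledast$ costs only $O(n^{-1/2})$.
\end{enumerate}

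The two parts of the difference factor are then treated separately.

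\emph{The $n^{-1/2}\rho^{-1}$ part.} This is of the form $C_{1,\ell}\rho^{-1}\varphi$ with $C_{1,\ell}=Cn^{-1/2}$. The discrete propagation estimate \eqref{eq:disc-iter} then gives $Cn^{-1/2}C_1^r\rho^r\Gamma^{-1}(1+\tfrac r2)\varphi$.

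\emph{The $n^{-1}\rho^{-3}$ part.} This is the main obstacle. Its extra $\rho^{-2}$ attaches to the local width of one slot, so \eqref{eq:disc-iter} does not apply. The plan is to do as in Replacement~A of Lemma~\ref{lem:replace}:
\begin{enumerate}[label=(\roman*)]
\item Integrate out the spatial variables by the exact convolution identity, so the widths telescope to $\rho_{jk}$.
\item Recognize the remaining time sum as the hard-class quantity $\mathcal H_m$ of \eqref{eq:hard-class}.
\item Apply Lemma~\ref{lem:hard-class}, which yields $Cn^{-1/2}\Gamma(\tfrac12)^{r-1}\Gamma^{-1}(\tfrac{r+1}2)\rho_{jk}^{r-1}$.
\end{enumerate}
One subtlety is the slot $m=1$ when the difference factor sits right after a continuous leading density. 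There the leading cell must be discretized first via (b), or bounded by collapsing the degenerate cell $i_1=j$ as in the chain convention.

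Finally, sum over the $r$ positions. Absorb the factor $r\le2^r$, the powers $(\pi c)^{dr/2}$ and $\Gamma(\tfrac12)^{r}$ into a relabelled $C_1$. Use $\rho_{jk}\le1$ and
\begin{equation*}
\Gamma^{-1}\big(\tfrac{r+1}{2}\big)\le C^r\,\Gamma^{-1}\big(1+\tfrac r2\big)
\end{equation*}
to bring both contributions to the common form $Cn^{-1/2}C_1^r\Gamma^{-1}(1+\tfrac r2)\varphi_{c,\rho_{jk}}$. Adding the first bracket then gives \eqref{eq:T2-bound}. Throughout, $c$ is enlarged finitely many times, per the section's convention.
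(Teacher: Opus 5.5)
Your proposal is correct and is essentially the paper's proof. The paper's Steps 1--5 are exactly your steps: the split through the grid embedding $H^{\mathrm{g}}_n$ with Lemma~\ref{lem:discr-kernel} for the first bracket, the replacement of the leading $\tilde p*$ by $\tilde p\circledast$ via the argument of Lemma~\ref{lem:T1} (your option (b)), the telescoping over the $r$ slots with Corollary~\ref{cor:approx-kernel} at the special slot, and the easy/hard split handled by \eqref{eq:disc-iter} and Lemma~\ref{lem:hard-class}.

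Your option (a) is also sound and slightly more economical. The hybrid convolution is multilinear in its grid factors, so you can telescope with the continuous leading factor left in place. You can then bound the cell-integrated $\tilde p$ directly as a density-type factor, because the squared width $u-t_j+\rho_{i_1,i_2}^2$ lies in $[\rho_{j,i_2}^2,2\rho_{j,i_2}^2]$. This removes the need for the Lemma~\ref{lem:T1} swap in this lemma.
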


\begin{proof}
Throughout $h=n^{-1}$ and $(\phi\circledast\psi)(j,k;x,y)=\sum_{i=j}^{k-1}h\int_{\R^d}\phi(j,i;x,z)\,\psi(i,k;z,y)\dd z$. For a
chain of grid indices $j=i_0\le i_1<\dots<i_r<i_{r+1}=k$, the composition of $r+1$ grid factors
$F_1,\dots,F_{r+1}$ reads
\begin{equation}
\label{eq:T2-chain}
\big(F_1\circledast\cdots\circledast F_{r+1}\big)(j,k;x,y)
=\sum_{j\le i_1<\dots<i_r\le k-1}h^{r}\!\int_{\R^{dr}}\prod_{m=1}^{r+1}F_m(i_{m-1},i_m;z_{m-1},z_m)\,\dd z_1\cdots\dd z_r,
\end{equation}
with $z_0=x$, $z_{r+1}=y$.

\textbf{Step 1: reduction to a discrete difference.}
Let $H^{\mathrm{g}}_n$ be the grid embedding of $H$ as in Lemma~\ref{lem:discr-kernel}, and decompose
\begin{equation}
\label{eq:T2-split}
\tilde p* H^{*r}-\tilde p* K_n^{\circledast r}
=\underbrace{\big(\tilde p* H^{*r}-\tilde p*H^{\mathrm{g}}_n{}^{\circledast r}\big)}_{=:\,(\mathrm{A})}
+\underbrace{\big(\tilde p*H^{\mathrm{g}}_n{}^{\circledast r}-\tilde p* K_n^{\circledast r}\big)}_{=:\,(\mathrm{B})}.
\end{equation}
By Lemma~\ref{lem:discr-kernel} the first part already obeys the required bound,
\begin{equation}
\label{eq:T2-A}
\abs{(\mathrm{A})}(j,k;x,y)\le C\,n^{-1/2}C_1^{r}\Gamma^{-1}\big(1+\tfrac r2\big)\varphi_{c,\rho_{jk}}(y-x),
\qquad 1\le r\le k-j .
\end{equation}
In $(\mathrm{B})$ both $H^{\mathrm{g}}_n$ and $K_n$ are grid objects, so $\tilde p*$ may be replaced by its
grid version $\tilde p\circledast$: applying the argument of Lemma~\ref{lem:T1} with $G=H^{\mathrm{g}}_n{}^{\circledast r}$ and
with $G=K_n^{\circledast r}$ (each has $r$ grid factors bounded by $C_1\rho^{-1}\varphi$),
\begin{equation}
\label{eq:T2-swap}
\abs{\tilde p* G-\tilde p\circledast G}(j,k;x,y)\le C\,n^{-1/2}C_1^{r}\Gamma^{-1}\big(1+\tfrac r2\big)\varphi_{c,\rho_{jk}}(y-x),
\qquad G\in\{H^{\mathrm{g}}_n{}^{\circledast r},K_n^{\circledast r}\},
\end{equation}
so that only the purely discrete difference $\tilde p\circledast H^{\mathrm{g}}_n{}^{\circledast r}-\tilde p\circledast K_n^{\circledast r}$ remains.

\textbf{Step 2: telescoping and the two classes.}
By the telescoping identity for convolution powers,
\begin{equation}
\label{eq:HK-telescope}
\tilde p\circledast H^{\mathrm{g}}_n{}^{\circledast r}-\tilde p\circledast K_n^{\circledast r}
=\sum_{\ell=0}^{r-1}\tilde p\circledast H^{\mathrm{g}}_n{}^{\circledast \ell}\circledast\big[H^{\mathrm{g}}_n-K_n\big]\circledast K_n^{\circledast (r-1-\ell)}.
\end{equation}
Fix $\ell$; the corresponding summand is the composition \eqref{eq:T2-chain} of the $r+1$ grid
factors
\begin{equation}
\label{eq:T2-factors}
F_1=\tilde p,\qquad
(F_2,\dots,F_{r+1})=\Big(\underbrace{H^{\mathrm{g}}_n,\dots,H^{\mathrm{g}}_n}_{\ell},\ H^{\mathrm{g}}_n-K_n,\
\underbrace{K_n,\dots,K_n}_{r-1-\ell}\Big),
\end{equation}
so the difference $H^{\mathrm{g}}_n-K_n$ occupies the interval $[i_m,i_{m+1}]$ with $m:=\ell+1\in\{1,\dots,r\}$,
and each of the other $r$ factors is a single grid kernel. We insert the single-slot bounds
\begin{equation}
\label{eq:T2-frozen}
\begin{aligned}
\abs{\tilde p(j,i_1;z_0,z_1)}&\le C\,\varphi_{c,\rho_{j,i_1}}(z_1-z_0),\\
\abs{H^{\mathrm{g}}_n(i_\ell,i_{\ell+1};z_\ell,z_{\ell+1})}+\abs{K_n(i_\ell,i_{\ell+1};z_\ell,z_{\ell+1})}
&\le C_1\,\rho_{i_\ell,i_{\ell+1}}^{-1}\,\varphi_{c,\rho_{i_\ell,i_{\ell+1}}}(z_{\ell+1}-z_\ell),
\end{aligned}
\end{equation}
the second being \eqref{eq:boundH} for $H^{\mathrm{g}}_n$ and Lemma~\ref{lem:disc-kernel-bounds}(i) for
$K_n$, and, at the special interval, Corollary~\ref{cor:approx-kernel} in the form
\begin{equation}
\label{eq:HmK-full}
\begin{aligned}
\abs{H^{\mathrm{g}}_n-K_n}(i_m,i_{m+1};z_m,z_{m+1})
&\le C\,n^{-1/2}\rho_{i_m,i_{m+1}}^{-1}\,\varphi_{c,\rho_{i_m,i_{m+1}}}(z_{m+1}-z_m)\\
&\quad +C\,n^{-1}\rho_{i_m,i_{m+1}}^{-3}\,\varphi_{c,\rho_{i_m,i_{m+1}}}(z_{m+1}-z_m).
\end{aligned}
\end{equation}
Convolving the $r+1$ Gaussian factors (all carrying the same constant $c$) with
$\varphi_{c,\rho}*\varphi_{c,\rho'}=(\pi c)^{d/2}\varphi_{c,\sqrt{\rho^2+\rho'^2}}$ and
$\sum_{\ell=0}^{r}\rho_{i_\ell,i_{\ell+1}}^2=\rho_{jk}^2$ (widths add in quadrature),
\begin{equation}
\label{eq:T2-spatial}
\varphi_{c,\rho_{j,i_1}}*\varphi_{c,\rho_{i_1,i_2}}*\cdots*\varphi_{c,\rho_{i_r,k}}
=(\pi c)^{dr/2}\,\varphi_{c,\rho_{jk}},
\end{equation}
so each summand of \eqref{eq:HK-telescope} splits into the two classes
\begin{equation}
\label{eq:T2-two-classes}
\abs{\tilde p\circledast H^{\mathrm{g}}_n{}^{\circledast \ell}\circledast[H^{\mathrm{g}}_n-K_n]\circledast K_n^{\circledast (r-1-\ell)}}(j,k;x,y)
\le C\,(\pi c)^{dr/2}C_1^{r-1}\big(\mathcal E_m+\mathcal H_m\big)\varphi_{c,\rho_{jk}}(y-x),
\end{equation}
where, with $m=\ell+1$ and $h=n^{-1}$, the \emph{easy class} is
\begin{equation}
\label{eq:T2-hard}
\mathcal E_m:=h^{r}n^{-1/2}\!\!\sum_{j\le i_1<\dots<i_r\le k-1}
\Big[\prod_{\substack{1\le\ell\le r\\ \ell\ne m}}\rho_{i_\ell,i_{\ell+1}}^{-1}\Big]\rho_{i_m,i_{m+1}}^{-1},
\end{equation}
while the \emph{hard class} $\mathcal H_m$ is the quantity \eqref{eq:hard-class} of
Lemma~\ref{lem:hard-class}.

\textbf{Step 3: the easy class $n^{-1/2}\rho^{-1}$.}
Since $n^{-1/2}\le1$, each of the $r$ kernel factors in $\mathcal E_m$ is bounded by
$C_1\rho^{-1}\varphi$, and the discrete propagation estimate \eqref{eq:disc-iter} with
$G=\tilde p$ (so $M=O(1)$) gives, using $\rho_{jk}\le1$,
\begin{equation}
\label{eq:T2-easy-bound}
\mathcal E_m\le C\,n^{-1/2}\Gamma(\tfrac12)^{r}\rho_{jk}^{r}\Gamma^{-1}\big(1+\tfrac r2\big)
\le C\,n^{-1/2}\Gamma(\tfrac12)^{r}\Gamma^{-1}\big(1+\tfrac r2\big).
\end{equation}

\textbf{Step 4: the hard class: taming $n^{-1}\rho^{-3}$.}
This is the term produced by the second summand of \eqref{eq:HmK-full}, and it is the only place
where the ${-}3$ exponent has to be controlled. By Lemma~\ref{lem:hard-class}, applied at the pair
$(j,k)$ of the present chain, the quantity $\mathcal H_m$ obeys the bound \eqref{eq:T2-hard-bound}.

\textbf{Step 5: conclusion.}
Combining \eqref{eq:T2-easy-bound} and \eqref{eq:T2-hard-bound} in \eqref{eq:T2-two-classes} and
absorbing all chain-independent constants (notably $(\pi c)^{dr/2}$ and $\Gamma(\tfrac12)^{r}$)
into the relabelled $C_1$, each summand of \eqref{eq:HK-telescope} is bounded by
\begin{equation}
C\,n^{-1/2}C_1^{r}\,\Gamma^{-1}(\tfrac{r+1}{2})\rho_{jk}^{r-1}\varphi_{c,\rho_{jk}}(y-x)
\le C\,n^{-1/2}C_1^{r}\,\Gamma^{-1}\big(1+\tfrac r2\big)\varphi_{c,\rho_{jk}}(y-x),
\end{equation}
where we used $\rho_{jk}\le1$ and
$\Gamma^{-1}(\tfrac{r+1}{2})\le C^{r}\Gamma^{-1}(1+\tfrac r2)$ (equivalently
$\Gamma(1+\tfrac r2)/\Gamma(\tfrac{r+1}{2})=O(\sqrt r)=O(C^{r})$). Summing over the $r$ positions
$m=1,\dots,r$ (the factor $r$ absorbed into $C_1^{r}$) and inserting the result together with
\eqref{eq:T2-A} and \eqref{eq:T2-swap} into \eqref{eq:T2-split} yields \eqref{eq:T2-bound}. For $r=0$
there is nothing to prove, since $\tilde p* H^{*0}=\tilde p* K_n^{\circledast 0}=\tilde p$. This proves the
lemma.
\end{proof}

\section{Completion of the Proof of the Main Theorem}
\label{sec:completion}

\begin{proof}
We prove Theorem~\ref{thm:main}. Fix $n\ge1$, put $h=n^{-1}$, and consider the terminal time
$s=0$, $t=1$, i.e.\ the grid indices $j=0$, $k=n$; for this pair the width \eqref{eq:rhodef} is
\begin{equation}
\rho_{0,n}^2=1,\qquad\text{so that}\qquad \varphi_{c,\rho_{0,n}}(u)=\varphi_{c,1}(u)=e^{-\norm{u}^2/c}.
\end{equation}
Recall the notational identifications $p(x_0,y)=p(0,1,x_0,y)$ and $p_n(x_0,y)=p_n(0,n;x_0,y)$. It
is convenient to abbreviate, for $r\ge0$,
\begin{equation}
\label{eq:step0-notation}
\mathcal A_r:=\big(\tilde p* H^{*r}\big)(0,1,x_0,y),\qquad
\mathcal B_r:=\big(\tilde p\circledast K_n^{\circledast r}\big)(0,n;x_0,y),\qquad
\mathcal C_r:=\big(\tilde p* K_n^{\circledast r}\big)(0,n;x_0,y),
\end{equation}
where $\mathcal C_r$ is the hybrid object in which only the leading factor $\tilde p$ is integrated in
continuous time, according to the convention of Lemma~\ref{lem:T1}.

\textbf{Step 1: the two parametrix expansions.}
By the continuous parametrix expansion of Lemma~\ref{lem:param-cont}, evaluated at $(s,t)=(0,1)$,\footnote{See Remark~\ref{rem:exist}: this identification may equivalently be replaced by the definition of $S$, which does not presuppose the existence of $p$.}
\begin{equation}
\label{eq:step1-cont}
p(0,1,x_0,y)=\sum_{r\ge0}\mathcal A_r,
\end{equation}
the series converging absolutely, each term being controlled by \eqref{eq:bound-r}. By the discrete
parametrix expansion with remainder of Lemma~\ref{lem:replace}, evaluated at $j=0$, $k=n$ (so that
$k-j=n$ and $\rho=\rho_{0,n}=1$),
\begin{equation}
\label{eq:step1-disc}
\begin{aligned}
p_n(0,n;x_0,y)&=\sum_{r=0}^{n}\mathcal B_r+\mathcal R_n(0,n;x_0,y),\\
\abs{\mathcal R_n(0,n;x_0,y)}&\le C\,n^{-1/2}\varphi_{c',1}(y-x_0),
\end{aligned}
\end{equation}
the summation stopping at $r=n=k-j$ because an $r$-fold grid convolution needs $r$ strictly
increasing grid points. This is the reason \eqref{eq:step1-disc} carries a remainder while
\eqref{eq:step1-cont} does not.

\textbf{Step 2: subtracting, and splitting the continuous series.}
Subtracting \eqref{eq:step1-disc} from \eqref{eq:step1-cont}, and matching the two series for
$0\le r\le n$,
\begin{equation}
\label{eq:step2}
\mathcal D:=p(0,1,x_0,y)-p_n(0,n;x_0,y)
=\underbrace{\sum_{r=0}^{n}\big(\mathcal A_r-\mathcal B_r\big)}_{=:\,(\mathrm{I})}
+\underbrace{\sum_{r>n}\mathcal A_r}_{=:\,(\mathrm{II})}
-\underbrace{\mathcal R_n(0,n;x_0,y)}_{=:\,(\mathrm{III})};
\end{equation}
indeed the terms $\mathcal B_r=\tilde p\circledast K_n^{\circledast r}$ vanish for $r>n$ (the chain of indices lies in
$\{0,\dots,n-1\}$, which has only $n$ elements, so
no chain of length $r>n$ fits --- this is exactly the range excluded
by the convention \eqref{eq:conv-order} of Section~\ref{sec:comparison}; cf.\ the remark following
\eqref{eq:T2-bound}), so only the continuous tail $r>n$ survives. We bound $(\mathrm{I})$,
$(\mathrm{II})$, $(\mathrm{III})$ one at a time.

\textbf{Step 3: the finite range: term $(\mathrm{I})$.}
Fix $0\le r\le n$ and insert the hybrid object $\mathcal C_r$ between the two summands of $\mathcal A_r-\mathcal B_r$:
\begin{equation}
\label{eq:step3-split}
\mathcal A_r-\mathcal B_r
=\underbrace{\big(\mathcal A_r-\mathcal C_r\big)}_{=:\,(\mathrm{a})}
+\underbrace{\big(\mathcal C_r-\mathcal B_r\big)}_{=:\,(\mathrm{b})}.
\end{equation}
The first bracket $(\mathrm{a})$ removes the discrepancy in the kernel ($H\to K_n$), the second
$(\mathrm{b})$ removes the discrepancy in the leading convolution ($*\to\circledast$); this is exactly the
two-step comparison along the hierarchy \eqref{eq:hierarchy}. Since $r\le n=k-j$, both lemmas
apply in this range: by Lemma~\ref{lem:T2},
\begin{equation}
\label{eq:step3-a}
\abs{\mathcal A_r-\mathcal C_r}\le C\,n^{-1/2}C_1^{r}\Gamma^{-1}\big(1+\tfrac r2\big)\varphi_{c,1}(y-x_0),
\end{equation}
and by Lemma~\ref{lem:T1},
\begin{equation}
\label{eq:step3-b}
\abs{\mathcal C_r-\mathcal B_r}\le C\,n^{-1/2}C_1^{r}\Gamma^{-1}\big(1+\tfrac r2\big)\varphi_{c,1}(y-x_0).
\end{equation}
Adding \eqref{eq:step3-a} and \eqref{eq:step3-b} and summing over $0\le r\le n$,
\begin{equation}
\label{eq:step3-I}
\abs{(\mathrm{I})}
\le C\,n^{-1/2}\Big[\sum_{r=0}^{n}C_1^{r}\Gamma^{-1}\big(1+\tfrac r2\big)\Big]\varphi_{c,1}(y-x_0)
\le C\,n^{-1/2}\varphi_{c,1}(y-x_0),
\end{equation}
the bracket being a pure number: since $\Gamma^{-1}(1+\tfrac r2)$ decays superexponentially in $r$,
the series $\sum_{r\ge0}C_1^{r}\Gamma^{-1}(1+\tfrac r2)$ converges, so the bracket is absorbed into
the constant $C$.

\textbf{Step 4: the continuous tail: term $(\mathrm{II})$.}
For $r>n$ the grid object $\mathcal B_r=\tilde p\circledast K_n^{\circledast r}$ vanishes (an $r$-fold grid convolution needs
$r$ strictly increasing grid points in $\{0,\dots,n-1\}$, which exist only for $r\le n$), so the
tail is the pure continuous series $\sum_{r>n}\mathcal A_r$. By the termwise bound \eqref{eq:bound-r} of
Lemma~\ref{lem:kernbound}, evaluated at $(s,t)=(0,1)$ and $\rho=1$,
\begin{equation}
\label{eq:step4-term}
\abs{\mathcal A_r}=\abs{\big(\tilde p* H^{*r}\big)(0,1,x_0,y)}
\le a_r\,\varphi_{c,1}(y-x_0),\qquad
a_r:=C_1^{\,r+1}\,\Gamma^{-1}\big(1+\tfrac r2\big).
\end{equation}
Since $\Gamma^{-1}(1+\tfrac r2)$ decays superexponentially while $C_1$ is fixed, the series
$\sum_{r\ge0}a_r$ converges, so $\sum_{r>n}a_r\to0$. The tail is, however, \emph{not} simply
dominated by its first term: the sequence $a_r$ first increases, because the ratio
\begin{equation}
\frac{a_{r+1}}{a_r}=C_1\,\frac{\Gamma(1+\tfrac r2)}{\Gamma(1+\tfrac{r+1}{2})}
\le 2C_1\Big(1+\tfrac r2\Big)^{-1/2}\sim C_1\sqrt{2/r}
\end{equation}
exceeds $1$ for $r\lesssim2C_1^2$ and falls below $1$ afterwards, so $a_r$ attains its maximum near
$r\approx2C_1^2$. We therefore split the tail at a threshold $R$, chosen depending only on $C_1$ and
so large that
\begin{equation}
\label{eq:step4-threshold}
R>2eC_1^2\qquad\text{and}\qquad 2C_1\sqrt{2/R}\le\tfrac{\sqrt2}2 .
\end{equation}

\emph{(a) Large $n$: $n\ge R$.} For $r\ge n\ge R$ the ratio satisfies
$a_{r+1}/a_r\le2C_1(1+\tfrac r2)^{-1/2}\le2C_1(1+\tfrac n2)^{-1/2}\le2C_1\sqrt{2/R}\le\tfrac{\sqrt2}2$,
so the tail is dominated by its first term,
\begin{equation}
\label{eq:step4-large}
\sum_{r>n}a_r\le a_{n+1}\sum_{s\ge0}\Big(\tfrac{\sqrt2}2\Big)^{s}\le C\,a_{n+1}
=C\,C_1^{\,n+2}\,\Gamma^{-1}\big(1+\tfrac{n+1}2\big)\le C\,C_1^{\,n}\,\Gamma^{-1}\big(1+\tfrac n2\big).
\end{equation}
By Stirling's lower bound $\Gamma(1+\tfrac n2)\ge c\,(n/(2e))^{n/2}$ and $n\ge R>2eC_1^2$,
\begin{equation}
C_1^{\,n}\,\Gamma^{-1}\big(1+\tfrac n2\big)\le C\Big(\frac{2eC_1^2}{n}\Big)^{n/2}\le C\,n^{-1/2},
\end{equation}
the last step because the exponential $(2eC_1^2/n)^{n/2}$ (with $2eC_1^2/n<1$) decays faster than the
polynomial $n^{-1/2}$; hence $\sum_{r>n}a_r\le C\,n^{-1/2}$ for $n\ge R$.

\emph{(b) Small $n$: $1\le n<R$.} Here the tail is majorised by the total mass
$S:=\sum_{r\ge1}a_r<\infty$, and $n^{-1/2}\ge R^{-1/2}$; hence
\begin{equation}
\sum_{r>n}a_r\le S\le S\sqrt R\,n^{-1/2}.
\end{equation}

Combining (a) and (b) with $C_\ast:=\max\{C,\,S\sqrt R\}$ (a constant depending only on $C_1$, hence only
on the coefficients) gives
\begin{equation}
\label{eq:step4}
\abs{(\mathrm{II})}=\Big|\sum_{r>n}\mathcal A_r\Big|\le C_\ast\,n^{-1/2}\varphi_{c,1}(y-x_0) = C_\ast\,n^{-1/2}\exp\Big(-\tfrac1c\norm{y-x_0}^2\Big).
\end{equation}

\textbf{Step 5: the discrete remainder: term $(\mathrm{III})$.}
By the remainder bound of Lemma~\ref{lem:replace}, with $j=0$, $k=n$ and $\rho=1$,
\begin{equation}
\label{eq:step5}
\abs{(\mathrm{III})}=\abs{\mathcal R_n(0,n;x_0,y)}\le C\,n^{-1/2}\varphi_{c',1}(y-x_0).
\end{equation}

\textbf{Step 6: combining the three bounds.}
We may enlarge $c$ so that $c\ge c'$ (the weights $\varphi_{c,1}$ are increasing in $c$, so this
only weakens the bounds \eqref{eq:step3-a}--\eqref{eq:step4}); then
$\varphi_{c',1}(y-x_0)=e^{-\norm{y-x_0}^2/c'}\le e^{-\norm{y-x_0}^2/c}=\varphi_{c,1}(y-x_0)$.
Collecting \eqref{eq:step3-I}, \eqref{eq:step4} and \eqref{eq:step5} in \eqref{eq:step2},
\begin{equation}
\label{eq:step6}
\abs{p(0,1,x_0,y)-p_n(0,n;x_0,y)}\le C\,n^{-1/2}\varphi_{c,1}(y-x_0)
=C\,n^{-1/2}\exp\Big(-\tfrac1c\norm{y-x_0}^2\Big).
\end{equation}

\textbf{Step 7: the weighted supremum.}
Multiplying \eqref{eq:step6} by $\exp\big(\tfrac1c\norm{y-x_0}^2\big)$, which is exactly the
reciprocal of the weight, gives the $y$-independent bound
\begin{equation}
\exp\Big(\tfrac1c\norm{y-x_0}^{2}\Big)\abs{p(0,1,x_0,y)-p_n(0,n;x_0,y)}\le C\,n^{-1/2},
\qquad y\in\R^d .
\end{equation}
Taking the supremum over $y\in\R^d$ yields \eqref{eq:maintheorem}, and since the exponential weight
is bounded below by $1$ this also gives the uniform bound \eqref{eq:uniformpointwise} of
Theorem~\ref{thm:main}. This completes the proof.
\end{proof}

\appendix

\section{Proof of the Moment and Increment Estimates}
\label{app:moments}

We prove Lemma~\ref{lem:moments}. Throughout, $C_\kappa$ denotes a finite constant depending only on
$\kappa$, $\norm{x_0}$, $\norm{b}_\infty$ and $\norm{\sigma}_\infty$, whose value may change from line
to line. By Assumption~\ref{ass:spatial} (with $\abs{\alpha}=0$), the coefficients are uniformly
bounded: $\norm{b}\le\norm{b}_\infty$ and $\norm{\sigma}\le\norm{\sigma}_\infty$, the latter
since $\norm{\sigma}=\norm{\Sigma^{1/2}}\le\norm{\Sigma}^{1/2}\le\norm{\Sigma}_\infty^{1/2}$.

\emph{(i) Moments of the continuous solution.} Write $X_t=x_0+I_t+M_t$ with
\[
I_t=\int_0^t b(s,X_s,\mu_s)\,\dd s,\qquad
M_t=\int_0^t \sigma(s,X_s,\mu_s)\,\dd W_s .
\]
By $(a+b+c)^{2\kappa}\le 3^{2\kappa-1}(a^{2\kappa}+b^{2\kappa}+c^{2\kappa})$,
\[
\E\norm{X_t}^{2\kappa}\le 3^{2\kappa-1}\Big(\norm{x_0}^{2\kappa}+\E\norm{I_t}^{2\kappa}+\E\norm{M_t}^{2\kappa}\Big).
\]
For the drift term, H\"older's inequality and $\norm{b}\le\norm{b}_\infty$ give
\[
\E\norm{I_t}^{2\kappa}\le\E\Big(\int_0^t\norm{b(s,X_s,\mu_s)}\,\dd s\Big)^{2\kappa}
\le t^{2\kappa}\norm{b}_\infty^{2\kappa}\le\norm{b}_\infty^{2\kappa},
\]
using $t\le1$. For the diffusion term, by the Burkholder--Davis--Gundy inequality,
\[
\E\norm{M_t}^{2\kappa}\le C_\kappa^{\mathrm{BDG}}\,\E\Big(\int_0^t\norm{\sigma(s,X_s,\mu_s)}^2\,\dd s\Big)^{\kappa}
\le C_\kappa^{\mathrm{BDG}}\,t^{\kappa}\norm{\sigma}_\infty^{2\kappa}
\le C_\kappa^{\mathrm{BDG}}\norm{\sigma}_\infty^{2\kappa}.
\]
Combining the three bounds gives
\[
\E\norm{X_t}^{2\kappa}\le 3^{2\kappa-1}\Big(\norm{x_0}^{2\kappa}+\norm{b}_\infty^{2\kappa}
+C_\kappa^{\mathrm{BDG}}\norm{\sigma}_\infty^{2\kappa}\Big)=:C_\kappa,
\]
uniformly in $t\in[0,1]$. This proves the first bound in Lemma~\ref{lem:moments}. 

\emph{(ii) Moments of the Euler scheme.} By \eqref{eq:Euler},
\[
X^{(n)}_k=x_0+h\sum_{j=0}^{k-1}b\big(t_j,X^{(n)}_j,\mu^{(n)}_j\big)
+\sqrt h\sum_{j=0}^{k-1}\sigma\big(t_j,X^{(n)}_j,\mu^{(n)}_j\big)\xi_{j+1}.
\]
As above,
\[
\E\norm{X^{(n)}_k}^{2\kappa}\le 3^{2\kappa-1}\Big(\norm{x_0}^{2\kappa}+\mathcal M_k+\mathcal V_k\Big),
\]
with $b_j=b(t_j,X^{(n)}_j,\mu^{(n)}_j)$, $\sigma_j=\sigma(t_j,X^{(n)}_j,\mu^{(n)}_j)$,
\[
\mathcal M_k=\E\norm{h\sum_{j=0}^{k-1}b_j}^{2\kappa},\qquad
\mathcal V_k=\E\norm{\sqrt h\sum_{j=0}^{k-1}\sigma_j\,\xi_{j+1}}^{2\kappa}.
\]
For the drift, the triangle inequality gives the pointwise bound
$\norm{h\sum_{j=0}^{k-1}b_j}\le h\sum_{j=0}^{k-1}\norm{b_j}\le t_k\norm{b}_\infty$, where
$t_k=kh\le1$; hence
\[
\mathcal M_k=\E\norm{h\sum_{j=0}^{k-1}b_j}^{2\kappa}\le\big(t_k\norm{b}_\infty\big)^{2\kappa}
\le\norm{b}_\infty^{2\kappa}.
\]
For the diffusion, $Z_k=\sqrt h\sum_{j<k}\sigma_j\xi_{j+1}$ is a discrete martingale; conditionally
on the past, each increment $\sqrt h\,\sigma_j\xi_{j+1}$ is Gaussian with covariance
$h\,\sigma_j\sigma_j^{\top}$, so the discrete Burkholder--Davis--Gundy inequality reduces to a
Gaussian moment bound (for $\kappa=1$ the first inequality below is an equality):
\[
\mathcal V_k=\E\norm{Z_k}^{2\kappa}\le C_\kappa\,\E\Big(h\sum_{j=0}^{k-1}\norm{\sigma_j}^2\Big)^{\kappa}
\le C_\kappa\big(t_k\norm{\sigma}_\infty^2\big)^{\kappa}\le C_\kappa\norm{\sigma}_\infty^{2\kappa}.
\]
Hence
\[
\E\norm{X^{(n)}_k}^{2\kappa}\le 3^{2\kappa-1}\Big(\norm{x_0}^{2\kappa}+\norm{b}_\infty^{2\kappa}
+C_\kappa\norm{\sigma}_\infty^{2\kappa}\Big)=:C_\kappa,
\]
uniformly in $k\le n$ and in $n$. This proves the second bound in Lemma~\ref{lem:moments}.

\emph{(iii) Increment estimates.} For $0\le s<t\le1$,
\[
X_t-X_s=\int_s^t b(u,X_u,\mu_u)\,\dd u+\int_s^t \sigma(u,X_u,\mu_u)\,\dd W_u.
\]
By H\"older's and the Burkholder--Davis--Gundy inequalities,
\[
\E\norm{X_t-X_s}^{2\kappa}
\le 2^{2\kappa-1}\Big((t-s)^{2\kappa}\norm{b}_\infty^{2\kappa}+C_\kappa\,(t-s)^{\kappa}\norm{\sigma}_\infty^{2\kappa}\Big).
\]
Since $t-s\le1$, we have $(t-s)^{2\kappa}\le(t-s)^{\kappa}$, so
\[
\E\norm{X_t-X_s}^{2\kappa}\le 2^{2\kappa-1}\big(\norm{b}_\infty^{2\kappa}+C_\kappa\norm{\sigma}_\infty^{2\kappa}\big)(t-s)^{\kappa}
=:C_\kappa\,(t-s)^{\kappa},
\]
which proves the increment estimate of Lemma~\ref{lem:moments}.

\emph{(iv) Increment estimate for the interpolated process.} For the interpolated Euler process
$\bar X^{(n)}$ of \eqref{eq:cont-euler} and $0\le s<t\le1$,
\[
\bar X^{(n)}_t-\bar X^{(n)}_s
=\int_s^t b\big(s_n(u),\bar X^{(n)}_{s_n(u)},\mu^{(n)}_{s_n(u)}\big)\dd u
+\int_s^t \sigma\big(s_n(u),\bar X^{(n)}_{s_n(u)},\mu^{(n)}_{s_n(u)}\big)\dd W_u,
\]
and the two coefficients are bounded by $\norm{b}_\infty$ and $\norm{\sigma}_\infty$, uniformly in
$u$ and in $n$ (Assumption~\ref{ass:spatial}). Repeating verbatim the argument of (iii) --- only the
boundedness of the coefficients is used --- therefore gives
\[
\E\norm{\bar X^{(n)}_t-\bar X^{(n)}_s}^{2\kappa}
\le 2^{2\kappa-1}\big(\norm{b}_\infty^{2\kappa}+C_\kappa\norm{\sigma}_\infty^{2\kappa}\big)(t-s)^{\kappa}
=:C_\kappa\,(t-s)^{\kappa},
\]
uniformly in $n$, which is the third estimate of Lemma~\ref{lem:moments}. For the main paper only
$\kappa=1$ is needed (the second moment and the second-moment increment).

\section{Proof of the Measure-Flow Convergence Rate}
\label{app:flowrate}

We prove Lemma~\ref{lem:flowrate}. Throughout, $C$ denotes a finite constant independent of $n$,
$t$ and $x_0$, whose value may change from line to line.

\textbf{Step 1: the interpolated process.} The process $\bar X^{(n)}$ defined in
\eqref{eq:cont-euler} coincides in law with the Euler scheme at the grid points (choose
$\xi_{k+1}:=(W_{t_{k+1}}-W_{t_k})/\sqrt{h}$, which is again a standard Gaussian vector, independent of the
past), so that $\mu^{(n)}_t=\Law(\bar X^{(n)}_t)$ and $\mu^{(n)}_{t_k}=\mu^{(n)}_k$.

\textbf{Step 2: the error SDE.} Set $\eta_t:=\bar X^{(n)}_t-X_t$. Subtracting \eqref{eq:SDE} from
\eqref{eq:cont-euler} gives
\[
\dd \eta_t=\Delta b_t\,\dd t+\Delta\sigma_t\,\dd W_t,
\]
with
\[
\Delta b_t=b\big(s_n(t),\bar X^{(n)}_{s_n(t)},\mu^{(n)}_{s_n(t)}\big)-b(t,X_t,\mu_t),\qquad
\Delta\sigma_t=\sigma\big(s_n(t),\bar X^{(n)}_{s_n(t)},\mu^{(n)}_{s_n(t)}\big)-\sigma(t,X_t,\mu_t),
\]
and $\eta_0=0$.

\textbf{Step 3: It\^o formula.} Applying It\^o's formula to $\norm{\eta_t}^2$, taking expectations (the
martingale term vanishes, since its integrand is square-integrable: $\norm{\Delta\sigma_s}$ is
bounded and $\E\norm{\eta_s}^2<\infty$, the latter following from Lemma~\ref{lem:moments} and the
frozen-increment estimate of Step~6(i) below), and using $2ab\le a^2+b^2$,
\[
\E\norm{\eta_t}^2\le\int_0^t\Big(\E\norm{\eta_s}^2+\E\norm{\Delta b_s}^2+\E\norm{\Delta\sigma_s}^2\Big)\,\dd s .
\]

\textbf{Step 4: five-piece decomposition of $\Delta b_s$.} Split
$\Delta b_s=b\big(s_n(s),\bar X^{(n)}_{s_n(s)},\mu^{(n)}_{s_n(s)}\big)-b(s,X_s,\mu_s)$ into five
pieces, each controlled by one Lipschitz/boundedness property ($\mathrm{Lip}$ denotes a common Lipschitz constant in
$x$, for $b$ by Assumption~\ref{ass:spatial} and for $\sigma=\Sigma^{1/2}$ by the square-root
transfer of Section~\ref{sec:setting}):
\[
\begin{aligned}
\text{(a)}\quad &
\norm{b\big(s_n(s),\bar X^{(n)}_{s_n(s)},\mu^{(n)}_{s_n(s)}\big)-b\big(s_n(s),\bar X^{(n)}_s,\mu^{(n)}_{s_n(s)}\big)}
\le \mathrm{Lip}\,\norm{\bar X^{(n)}_{s_n(s)}-\bar X^{(n)}_s};\\
\text{(b)}\quad &
\norm{b\big(s_n(s),\bar X^{(n)}_s,\mu^{(n)}_{s_n(s)}\big)-b\big(s_n(s),X_s,\mu^{(n)}_{s_n(s)}\big)}
\le \mathrm{Lip}\,\norm{\eta_s};\\
\text{(c)}\quad &
\norm{b\big(s_n(s),X_s,\mu^{(n)}_{s_n(s)}\big)-b\big(s_n(s),X_s,\mu_{s_n(s)}\big)}
\le \Lambda\,\mathcal{W}_2\big(\mu^{(n)}_{s_n(s)},\mu_{s_n(s)}\big);\\
\text{(d)}\quad &
\norm{b\big(s_n(s),X_s,\mu_{s_n(s)}\big)-b\big(s,X_s,\mu_{s_n(s)}\big)}
\le\norm{\partial_t b}_\infty\,h;\\
\text{(e)}\quad &
\norm{b\big(s,X_s,\mu_{s_n(s)}\big)-b\big(s,X_s,\mu_s\big)}
\le \Lambda\,\mathcal{W}_2\big(\mu_{s_n(s)},\mu_s\big),
\end{aligned}
\]
using the Lipschitz continuity of $b$ in $x$ (Assumption~\ref{ass:spatial}) and in $\mu$
(Assumption~\ref{ass:measLip}), and the boundedness of $\partial_t b$ (Assumption~\ref{ass:time}).
Squaring with $(a_1+\cdots+a_5)^2\le5(a_1^2+\cdots+a_5^2)$,
\[
\norm{\Delta b_s}^2\le C\Big(\norm{\bar X^{(n)}_{s_n(s)}-\bar X^{(n)}_s}^2+\norm{\eta_s}^2
+\mathcal{W}_2\big(\mu^{(n)}_{s_n(s)},\mu_{s_n(s)}\big)^2+h^2+\mathcal{W}_2\big(\mu_{s_n(s)},\mu_s\big)^2\Big).
\]
The same bound holds verbatim for $\Delta\sigma_s$, using the Lipschitz continuity of
$\sigma=\Sigma^{1/2}$ in $(x,\mu)$, by the square-root transfer of Assumptions~\ref{ass:spatial} and
\ref{ass:measLip} discussed in Section~\ref{sec:setting}.
Inserting into Step~3,
\[
\E\norm{\eta_t}^2\le C\int_0^t\Big(\E\norm{\eta_s}^2+\E\norm{\bar X^{(n)}_{s_n(s)}-\bar X^{(n)}_s}^2
+\mathcal{W}_2\big(\mu^{(n)}_{s_n(s)},\mu_{s_n(s)}\big)^2+\mathcal{W}_2\big(\mu_{s_n(s)},\mu_s\big)^2+h^2\Big)\,\dd s.
\]

\textbf{Step 5: absorb the measure-flow difference via coupling.} Let
$\Psi_t:=\sup_{r\le t}\E\norm{\eta_r}^2$. By the coupling definition of $\mathcal{W}_2$ (with the natural coupling
$(\bar X^{(n)}_{s_n(s)},X_{s_n(s)})$),
\[
\mathcal{W}_2\big(\mu^{(n)}_{s_n(s)},\mu_{s_n(s)}\big)^2\le\E\norm{\bar X^{(n)}_{s_n(s)}-X_{s_n(s)}}^2
=\E\norm{\eta_{s_n(s)}}^2\le \Psi_{s_n(s)}\le \Psi_s,
\]
so
\[
\Psi_t\le C\int_0^t\Big(\Psi_s+\E\norm{\bar X^{(n)}_{s_n(s)}-\bar X^{(n)}_s}^2
+\mathcal{W}_2\big(\mu_{s_n(s)},\mu_s\big)^2+h^2\Big)\,\dd s.
\]

\textbf{Step 6: two frozen estimates.} \emph{(i) Frozen increment.} On $[s_n(s),s]$ the coefficients
are frozen at $s_n(s)$ (since $s-s_n(s)\le h$), and by boundedness of the coefficients the drift
term has second moment $O(h^2)$ and the diffusion term $O(h)$, hence
\[
\E\norm{\bar X^{(n)}_s-\bar X^{(n)}_{s_n(s)}}^2\le2\norm{b}_\infty^2 h^2+2\norm{\sigma}_\infty^2 h\le C h.
\]
\emph{(ii) Time-H\"older regularity.} By the coupling definition of $\mathcal{W}_2$ and the increment estimate
of Lemma~\ref{lem:moments} (with $\kappa=1$),
\[
\mathcal{W}_2\big(\mu_{s_n(s)},\mu_s\big)^2\le\E\norm{X_{s_n(s)}-X_s}^2\le C\,\big(s-s_n(s)\big)\le Ch.
\]
Inserting both into Step~5 and using $t\le1$, $h^2\le h$,
\[
\Psi_t\le C\int_0^t\big(\Psi_s+Ch+Ch+h^2\big)\,\dd s\le Ch+C\int_0^t \Psi_s\,\dd s.
\]

\textbf{Step 7: Gronwall and conclusion.} By Gronwall's lemma,
\[
\Psi_t\le Ch\,e^{Ct}\le Ch\,e^{C},\qquad t\in[0,1],
\]
so $\Psi_1=O(h)=O(n^{-1})$. Finally, again by the coupling definition of $\mathcal{W}_2$,
\[
\mathcal{W}_2\big(\mu^{(n)}_t,\mu_t\big)^2\le\E\norm{\bar X^{(n)}_t-X_t}^2=\E\norm{\eta_t}^2\le \Psi_t\le \Psi_1=O(h),
\]
and therefore $\mathcal{W}_2(\mu^{(n)}_t,\mu_t)=O(h^{1/2})=O(n^{-1/2})$, uniformly in $t\in[0,1]$. This proves
Lemma~\ref{lem:flowrate}. The measure-flow term does not degrade the rate: it is absorbed into
$\Psi_s$ through the coupling in Step~5, so the rate coincides with the classical strong-error rate of
the Euler scheme.

\section{Proof of the Gaussian-Kernel Estimates}
\label{app:kernels}

We prove Lemma~\ref{lem:kernel}.

\emph{(i) Gaussian convolution.} From the definition of the Gaussian density $g$,
$\varphi_{c,\rho}(u)=\rho^{-d}\exp\big(-\tfrac{\norm{u}^2}{c\rho^2}\big)
=(\pi c)^{d/2}\,g\big(0,\tfrac{c\rho^2}{2}I;u\big)$.
Since the convolution of two centred Gaussians with covariances $A$ and $B$ is the centred
Gaussian with covariance $A+B$,
\begin{multline}
\big(\varphi_{c,\rho}*\varphi_{c',\rho'}\big)(u)
=(\pi c)^{d/2}(\pi c')^{d/2}\,g\big(0,\tfrac{c\rho^2+c'\rho'^2}{2}I;u\big)\\
=(\pi c)^{d/2}(\pi c')^{d/2}\big(\pi(c\rho^2+c'\rho'^2)\big)^{-d/2}
\exp\Big(-\frac{\norm{u}^2}{c\rho^2+c'\rho'^2}\Big).
\end{multline}
Since $c\rho^2+c'\rho'^2\ge\min(c,c')(\rho^2+\rho'^2)$ and
$(cc')^{d/2}=\max(c,c')^{d/2}\min(c,c')^{d/2}$, the prefactor satisfies
\[
(\pi c)^{d/2}(\pi c')^{d/2}\big(\pi(c\rho^2+c'\rho'^2)\big)^{-d/2}
\le\pi^{d/2}\max(c,c')^{d/2}(\rho^2+\rho'^2)^{-d/2},
\]
while $c\rho^2+c'\rho'^2\le(c+c')(\rho^2+\rho'^2)$ gives
$\exp\big(-\tfrac{\norm{u}^2}{c\rho^2+c'\rho'^2}\big)
\le\exp\big(-\tfrac{\norm{u}^2}{(c+c')(\rho^2+\rho'^2)}\big)$. Combining the two bounds,
\[
\big(\varphi_{c,\rho}*\varphi_{c',\rho'}\big)(u)
\le\pi^{d/2}\max(c,c')^{d/2}\,\varphi_{c+c',\sqrt{\rho^2+\rho'^2}}(u),
\]
which is item~(i) with $u=y-z$ (the integral in (i) is exactly this convolution evaluated at
$y-z$).

\emph{(ii) The factor $\norm{y-x}$.} Put $u=y-x$ and $\omega=\norm{u}/\rho$. Then
\begin{multline}
\norm{u}\,\varphi_{c,\rho}(u)
=\rho^{-d}\rho \omega\exp\Big(-\frac{\omega^2}{c}\Big)\\
=\rho\,\varphi_{c+1,\rho}(u)\cdot \omega\exp\Big(-\frac{\omega^2}{c}+\frac{\omega^2}{c+1}\Big)
=\rho\,\varphi_{c+1,\rho}(u)\cdot \omega\exp\Big(-\frac{\omega^2}{c(c+1)}\Big),
\end{multline}
using $\tfrac1c-\tfrac1{c+1}=\tfrac1{c(c+1)}$. The function $\omega\mapsto \omega\exp\big(-\tfrac{\omega^2}{c(c+1)}\big)$
is bounded on $[0,\infty)$, its maximum being $\sqrt{c(c+1)/(2e)}$ (attained at
$\omega=\sqrt{c(c+1)/2}$). Hence $\norm{y-x}\,\varphi_{c,\rho}(y-x)\le C\rho\,\varphi_{c+1,\rho}(y-x)$
with $C=\sqrt{c(c+1)/(2e)}$.

\emph{(iii) The Beta integral.} Substitute $u=s+(t-s)v$; then $t-u=(t-s)(1-v)$, $u-s=(t-s)v$ and
$\dd u=(t-s)\dd v$, so
\[
\int_s^t(t-u)^{\alpha/2}(u-s)^{\beta/2}\dd u
=(t-s)^{(\alpha+\beta)/2+1}\int_0^1(1-v)^{\alpha/2}v^{\beta/2}\dd v
=(t-s)^{(\alpha+\beta)/2+1}\,\mathrm{B}\big(\tfrac\alpha2+1,\tfrac\beta2+1\big),
\]
by $\mathrm{B}(a,b)=\int_0^1(1-v)^{a-1}v^{b-1}\dd v$ with $a=\tfrac\alpha2+1$, $b=\tfrac\beta2+1$.

\emph{(iv) Gaussian derivative control.} Write $g=(2\pi)^{-d/2}(\det\Theta)^{-1/2}e^{-Q}$ with
$Q=\tfrac12(z-m)^{\top}\Theta^{-1}(z-m)$. Since $c\rho^2 I\le\Theta\le C_0\rho^2 I$, we have
$\norm{\Theta^{-1}}\le C\rho^{-2}$, $\det(\Theta^{-1/2})\le(c\rho^2)^{-d/2}$, and
$(z-m)^{\top}\Theta^{-1}(z-m)\ge\norm{z-m}^2/(C_0\rho^2)$; hence
\[
g(m,\Theta;z)\le C\,\varphi_{c,\rho}(z-m),
\]
possibly enlarging $c$. Differentiating, $\partial^\alpha_z g$ is a finite linear combination of
monomials $\prod_a\big(\Theta^{-1}(z-m)\big)_{i_a}\prod_b(z-m)_{j_b}\,g$: each derivative applied
to the exponential contributes one factor $[\Theta^{-1}(z-m)]_{i_r}$, whereas each derivative
applied to an already-present $(z-m)$ component removes one such component. Hence in every monomial
the number $a$ of $\Theta^{-1}$ entries and the number $b$ of $(z-m)$ components satisfy
$b\le a\le|\alpha|$ and $-2a+b=-|\alpha|$. Using
$\norm{z-m}^b g\le C_b\rho^b\varphi_{c',\rho}(z-m)$ (item~(ii) iterated), each monomial is bounded
by $C\rho^{-2a}\cdot C\rho^{b}\cdot\varphi_{c',\rho}(z-m)=C\rho^{-|\alpha|}\varphi_{c',\rho}(z-m)$.
Summing over the finitely many monomials gives item~(iv).

\emph{(v) Gaussian comparison.} We first record the $\Theta$-derivative bound, valid whenever
$\Theta\ge c\rho^2 I$:
\[
\abs{\partial^\alpha_z\partial_{\Theta_{ij}}g(m,\Theta;z)}\le C_\alpha\,\rho^{-|\alpha|-2}\,\varphi_{c,\rho}(z-m).
\]
Indeed, $\partial_{\Theta_{ij}}g=g\,\tilde Q$ with
$\tilde Q=-\tfrac12(\Theta^{-1})_{ij}+\tfrac12[\Theta^{-1}(z-m)]_i[\Theta^{-1}(z-m)]_j$. Put $\zeta=z-m$ and
$P=[\Theta^{-1}\zeta]_i[\Theta^{-1}\zeta]_j=\zeta^{\top}\Theta^{-1}e_ie_j^{\top}\Theta^{-1}\zeta$, a quadratic form
in $\zeta$ whose coefficient matrix has operator norm $O(\rho^{-4})$ (since $\norm{\Theta^{-1}}\le C\rho^{-2}$).
Then
$\partial^\alpha_z\partial_{\Theta_{ij}}g=-\tfrac12(\Theta^{-1})_{ij}\partial^\alpha_z g+\tfrac12\partial^\alpha_z(Pg)$,
and by the Leibniz rule
$\partial^\alpha_z(Pg)=\sum_{\gamma\le\alpha}\binom{\alpha}{\gamma}(\partial^\gamma_z P)(\partial^{\alpha-\gamma}_z g)$,
where $\partial^\gamma_z P=0$ for $|\gamma|>2$ and
$|\partial^\gamma_z P|\le C\rho^{-4}\norm{\zeta}^{2-|\gamma|}$ for $|\gamma|\le2$. Using item~(iv) and
item~(ii) iterated,
$|\partial^{\alpha-\gamma}_z g|\le C\rho^{-(|\alpha|-|\gamma|)}\varphi_{c,\rho}(\zeta)$ and
$\norm{\zeta}^{2-|\gamma|}\varphi_{c,\rho}(\zeta)\le C\rho^{2-|\gamma|}\varphi_{c',\rho}(\zeta)$, so each term
is bounded by
$C\rho^{-4}\rho^{2-|\gamma|}\rho^{-|\alpha|+|\gamma|}\varphi_{c',\rho}(\zeta)=C\rho^{-|\alpha|-2}\varphi_{c',\rho}(\zeta)$,
while
$|(\Theta^{-1})_{ij}\partial^\alpha_z g|\le C\rho^{-2}\cdot C\rho^{-|\alpha|}\varphi_{c,\rho}(\zeta)=C\rho^{-|\alpha|-2}\varphi_{c,\rho}(\zeta)$;
summing gives the $\Theta$-derivative bound. Also $\partial_{m_i}g=-\partial_{z_i}g$, so that
$|\partial^\alpha_z\partial_{m_i}g|\le C\rho^{-|\alpha|-1}\varphi_{c,\rho}(z-m)$ by item~(iv).

Now differentiate along the segment $(m_\upsilon,\Theta_\upsilon)=(1-\upsilon)(m,\Theta)+\upsilon(m',\Theta')$,
so that $\Theta_\upsilon\ge c\rho^2 I$ and $\norm{m_\upsilon}\le C\rho^2$. By the mean value theorem,
\[
\begin{aligned}
\abs{\partial^\alpha_z g(m,\Theta;z)-\partial^\alpha_z g(m',\Theta';z)}
&\le\norm{m-m'}\sup_\upsilon\norm{\nabla_m\partial^\alpha_z g(m_\upsilon,\Theta_\upsilon;z)}\\
&\quad+\norm{\Theta-\Theta'}\sup_\upsilon\norm{\nabla_\Theta\partial^\alpha_z g(m_\upsilon,\Theta_\upsilon;z)}\\
&\le C_\alpha\Big[\norm{m-m'}\,\rho^{-|\alpha|-1}+\norm{\Theta-\Theta'}\,\rho^{-|\alpha|-2}\Big]\\
&\qquad\sup_\upsilon\varphi_{c,\rho}(z-m_\upsilon),
\end{aligned}
\]
using item~(iv) and the $\Theta$-derivative bound. Since $\norm{m_\upsilon}\le C\rho^2$, we have
$\norm{z-m_\upsilon}^2\ge\tfrac12\norm{z}^2-C\rho^4$, hence (enlarging $c$)
$\varphi_{c,\rho}(z-m_\upsilon)\le C\varphi_{c,\rho}(z)$ for every $\upsilon$. Therefore
\[
\abs{\partial^\alpha_z g(m,\Theta;z)-\partial^\alpha_z g(m',\Theta';z)}
\le C_\alpha\Big[\norm{m-m'}\,\rho^{-1}+\norm{\Theta-\Theta'}\,\rho^{-2}\Big]
\rho^{-|\alpha|}\,\varphi_{c,\rho}(z),
\]
which is item~(v).

\bibliographystyle{unsrtnat}
\bibliography{references}

\begin{thebibliography}{16}
\providecommand{\natexlab}[1]{#1}
\providecommand{\url}[1]{\texttt{#1}}
\expandafter\ifx\csname urlstyle\endcsname\relax
  \providecommand{\doi}[1]{doi: #1}\else
  \providecommand{\doi}{doi: \begingroup \urlstyle{rm}\Url}\fi

\bibitem[Kac(1956)]{Kac1956}
M.~Kac.
\newblock Foundations of kinetic theory.
\newblock In \emph{Proceedings of the Third Berkeley Symposium on Mathematical
  Statistics and Probability, Vol. III}, pages 171--197, Berkeley, 1956.
  University of California Press.

\bibitem[Baladron et~al.(2012)Baladron, Fasoli, Faugeras, and
  Touboul]{Baladron2012}
Javier Baladron, Diego Fasoli, Olivier Faugeras, and Jonathan Touboul.
\newblock Mean-field description and propagation of chaos in networks of
  {H}odgkin--{H}uxley and {F}itz{H}ugh--{N}agumo neurons.
\newblock \emph{The Journal of Mathematical Neuroscience}, 2\penalty0
  (1):\penalty0 10, 2012.
\newblock \doi{10.1186/2190-8567-2-10}.

\bibitem[Lasry and Lions(2007)]{LasryLions2007}
Jean-Michel Lasry and Pierre-Louis Lions.
\newblock Mean field games.
\newblock \emph{Japanese Journal of Mathematics}, 2\penalty0 (1):\penalty0
  229--260, 2007.

\bibitem[Huang et~al.(2006)Huang, Malham\'e, and
  Caines]{HuangMalhameCaines2006}
Minyi Huang, Roland~P. Malham\'e, and Peter~E. Caines.
\newblock Large population stochastic dynamic games: closed-loop
  {M}c{K}ean--{V}lasov systems and the {N}ash certainty equivalence principle.
\newblock \emph{Communications in Information and Systems}, 6\penalty0
  (3):\penalty0 221--252, 2006.

\bibitem[McKean(1966)]{McKean1966}
Henry~P. McKean.
\newblock A class of {M}arkov processes associated with nonlinear parabolic
  equations.
\newblock \emph{Proceedings of the National Academy of Sciences of the United
  States of America}, 56:\penalty0 1907--1911, 1966.

\bibitem[Sznitman(1991)]{Sznitman1991}
Alain-Sol Sznitman.
\newblock Topics in propagation of chaos.
\newblock In \emph{\'Ecole d'\'Et\'e de Probabilit\'es de Saint-Flour
  XIX--1989}, volume 1464 of \emph{Lecture Notes in Mathematics}, pages
  165--251. Springer, 1991.

\bibitem[Huang and Wang(2019)]{HuangWang2019}
Xing Huang and Feng-Yu Wang.
\newblock Distribution dependent {SDE}s with singular coefficients.
\newblock \emph{Stochastic Processes and their Applications}, 129:\penalty0
  4747--4770, 2019.

\bibitem[Ding and Qiao(2021)]{DingQiao2021}
Xiaojie Ding and Huijie Qiao.
\newblock {E}uler--{M}aruyama approximations for stochastic
  {M}c{K}ean--{V}lasov equations with non-{L}ipschitz coefficients.
\newblock \emph{Journal of Theoretical Probability}, 34:\penalty0 1408--1425,
  2021.

\bibitem[Aronson(1967)]{Aronson1967}
D.~G. Aronson.
\newblock Bounds for the fundamental solution of a parabolic equation.
\newblock \emph{Bulletin of the American Mathematical Society}, 73\penalty0
  (6):\penalty0 890--896, 1967.

\bibitem[Friedman(1964)]{Friedman1964}
Avner Friedman.
\newblock \emph{Partial Differential Equations of Parabolic Type}.
\newblock Prentice-Hall, Englewood Cliffs, NJ, 1964.

\bibitem[H{\"o}rmander(1967)]{Hormander1967}
Lars H{\"o}rmander.
\newblock Hypoelliptic second order differential equations.
\newblock \emph{Acta Mathematica}, 119:\penalty0 147--171, 1967.

\bibitem[Bally and Talay(1996{\natexlab{a}})]{BallyTalay1996a}
Vlad Bally and Denis Talay.
\newblock The law of the {E}uler scheme for stochastic differential equations
  {I}: convergence rate of the distribution function.
\newblock \emph{Probability Theory and Related Fields}, 104:\penalty0 43--60,
  1996{\natexlab{a}}.

\bibitem[Bally and Talay(1996{\natexlab{b}})]{BallyTalay1996b}
Vlad Bally and Denis Talay.
\newblock The law of the {E}uler scheme for stochastic differential equations
  {II}: convergence rate of the density.
\newblock \emph{Monte Carlo Methods and Applications}, 2:\penalty0 93--128,
  1996{\natexlab{b}}.

\bibitem[Konakov and Mammen(2000)]{KonakovMammen2000}
Valentin Konakov and Enno Mammen.
\newblock Local limit theorems for transition densities of {M}arkov chains
  converging to diffusions.
\newblock \emph{Probability Theory and Related Fields}, 117:\penalty0 551--587,
  2000.

\bibitem[Crisan and McMurray(2018)]{CrisanMcMurray2018}
Dan Crisan and Eamon McMurray.
\newblock Smoothing properties of {M}c{K}ean--{V}lasov {SDE}s.
\newblock \emph{Probability Theory and Related Fields}, 171:\penalty0 97--148,
  2018.

\bibitem[McKean and Singer(1967)]{McKeanSinger1967}
Henry~P. McKean and Isadore~M. Singer.
\newblock Curvature and the eigenvalues of the {L}aplacian.
\newblock \emph{Journal of Differential Geometry}, 1:\penalty0 43--69, 1967.

\end{thebibliography}

\end{document}